\definecolor{grn}{rgb}{0,0.4,0}
\definecolor{dgrn}{rgb}{0.0,0.3,0.0}
\definecolor{dpur}{rgb}{0.3,0.0,0.6}
\newtheorem{theorem}{Theorem}[section]
\newtheorem{proposition}{Proposition}[section]
\newtheorem{corollary}{Corollary}[section]
\newtheorem{lemma}{Lemma}[section]
\theoremstyle{definition}
\newtheorem{remark}{Remark}[section]
\numberwithin{equation}{section}
\newcommand{\mipc}[1]{{\color{blue}[#1]}}
\newcommand{\pomega}{\overrightarrow{\partial\Omega}}
\newcommand{\oo}{\overline\Omega}
\newcommand{\bo}{\overline\Omega\setminus\Omega}
\newcommand{\R}{\mathbb R}
\title{Sharp discrete isoperimetric inequalities in periodic graphs via discrete PDE and Semidiscrete Optimal Transport}
\author{Mircea Petrache\footnote{mpetrache@mat.uc.cl, Facultad de Matem\'aticas, Pontificia Universidad Cat\'olica de Chile, Avda. Vicu\~na Mackenna 4860, Macul, Santiago, 6904441, Chile.} \and Mat\'ias G\'omez\footnote{matias.gomeza.14@sansano.usm.cl}
}
\date{}
\begin{document}
\maketitle

\begin{abstract}
We develop criteria based on a calibration argument via discrete PDE and semidiscrete optimal transport, for finding sharp isoperimetric inequalities of the form $(\sharp \Omega)^{d-1} \le C (\sharp \overrightarrow{\partial\Omega})^d$ where $\Omega$ is a subset of vertices of a graph and $\overrightarrow{\partial\Omega}$ is the oriented edge-boundary of $\Omega$, as well as the optimum isoperimetric shapes $\Omega$. The method is a discrete counterpart to Optimal Transport and ABP method proofs valid in the continuum, and answers a question appearing in Hamamuki \cite{hamamuki}, extending that work valid for rectangular grids, to a larger class of graphs, including graphs dual to simplicial meshes of equal volume. We also connect the problem to the theory Voronoi tessellations and of Aleksandrov solutions from semidiscrete optimal transport. The role of the geometric-arithmetic inequality that was used in previous works in the continuum case and in the $\mathbb Z^d$-graph case is now played by a geometric cell-optimization constant, where the optimization problem is like in Minkowski's proof of his classical theorem for convex polyhedra. Finally, we study the optimal constant in the related discrete Neumann boundary problem, and present a series of possible directions for a further classification of discrete edge-isoperimetric constants and shapes.
\end{abstract}

\section{Introduction}

\subsection{Minimum energy and discrete Wulff shape in a graph}\label{introintro}
In the study of crystals and of other problems, one is lead to study the optimum shape of $N$-point systems at fixed $N$, minimizing an energy given by simple pairwise interactions, and its asymptotic in the limit $N\to\infty$. 

\medskip

As a basic model problem consider the following. If we study the shape of a model of crystallized material, we consider configurations which are subsets of a lattice $V = M \mathbb Z^d\subset \mathbb R^d$ with $M:\mathbb R^d\to\mathbb R^d$ an invertible linear operator. We further imagine that points in $V$ only interact in pairs, and that only $x, y$ such that $x-y\in\mathcal V$ for a fixed finite $\mathcal V\subset \mathbb R^d$, i.e. pairs of particles that are nearest-neighbors in finitely many fixed ``lattice-coordinate'', will be bonded together. If a symmetric $g:V\times V\to [0, +\infty)$ of the form $g(x,y)=\phi(x-y)$ measures how much energy is required to break bonds in the system, then a perfect crystal shape will be the one requiring the most energy to completely break apart, and breaking apart a finite configuration $\Omega\subset V$ would require energy
\begin{equation}\label{minen}
\mathcal E_g(\Omega)=\sum_{\substack{x,y\in\Omega\\x-y\in\mathcal V}} g(x,y) = \sharp \Omega \sum_{v\in\mathcal V} \phi(v) - \sum_{(x,y)\in\overrightarrow{\partial \Omega}}\phi(x-y),
\end{equation}
where 
\[
\overrightarrow{\partial\Omega}:=\{(x,y)\in\Lambda\times\Lambda:\ x\in \Omega, y\notin\Omega, x-y\in\mathcal V\}.
\]
Note that in $V$ each point may require at most a total of $\sum_{v\in \mathcal V} \phi(v)$ to be separated from its neighbors, thus the first term in \eqref{minen} is the ``bulk energy of the configuration" and is independent of the shape of $\Omega$. Then finding the most robust crystal shape of $\Omega\subset\Lambda$ at fixed $\sharp\Omega$ is equivalent to minimizing the "discrete weighted surface area" here represented by 
\begin{equation}\label{gperi}
\sharp_g\overrightarrow{\partial\Omega}:=-\sum_{(x,y)\in\overrightarrow{\partial \Omega}}g(x,y).
\end{equation}
If a minimum-energy $\Omega$ exists at fixed $\sharp \Omega$ and $\sharp_g\overrightarrow{\partial\Omega}$ has growth $O\left((\sharp \Omega)^{\frac{d-1}d}\right)$ like an area term, similarly to the continuum case, then we also have an isoperimetric inequality which has the following scaling-invariant form:
\begin{equation}\label{minen1}
(\sharp \Omega)^{d-1} \le C_{iso} (\sharp_g\overrightarrow{\partial\Omega})^d.
\end{equation}
More generally we work in the following setup:
\begin{itemize}
 \item Let $G=(V,A,E)$ be an ambient graph, with $V\subset \mathbb R^d$ a discrete set, $A:V\times V\to \mathbb R$ a symmetric weight function, and edge set $E=\{(x,y)\in V\times V:\ A(x,y)\neq 0\}$. 
 \item Consider a symmetric function $g:E\to [0, +\infty)$ which represents the cost of edges of $G$.
\end{itemize}
Then we define $\sharp_g\overrightarrow{\partial\Omega}$ as in \eqref{gperi} and we desire to find sharp inequalities of the form \eqref{minen1} and to characterize the shape of configurations $\Omega\subset V$ for wihch equality holds in \eqref{minen1}. The optimum value $C_{iso}$ will be called the sharp isoperimetric constant of the weighted graph $G$.

\subsection{Wulff shape theory in the continuum}
The continuum analogue of the cardinality $\sharp \Omega$ for a domain $\Omega\subset \R^d$ will be its Lebesgue measure $|\Omega|=\mathcal L^d(\Omega)$, and as a replacement for the surface energy \eqref{minen1} we introduce the usual anisotropic perimeter of a set of finite perimeter:
\[
\mathrm{Per}_\phi(\Omega):=\int_{\partial^*\Omega}\phi(\nu(x)) d\mathcal H^{d-1}(x),
\]
where $\phi:\R^d\to\R$ is a convex positive and positively $1$-homogeneous symmetric weight function (i.e. a quasinorm), $\mathcal H^k$ is the $k$-dimensional Hausdorff measure and $\partial^*\Omega$ is the reduced boundary, on which the measure-theoretic normal $\nu:\partial^*\Omega\to\mathbb S^{d-1}$ can be defined (see \cite{maggibook}). Then the Wulff shape $H_\phi$ associated to $\phi$ is the {\bf unit ball of the seminorm dual to $\phi$}, i.e. 
\begin{equation}\label{wulff}
H_\phi:=\{p:\ \forall x\in\R^d,\ p\cdot x < \phi(x)\}.
\end{equation}
This is not the most general setup for the study of Wulff shapes: convexity and symmetry of $\phi$ can be withdrawn, see e.g. \cite{taylor}. For more general isoperimetric problems see \cite{osserman1978isoperimetric, brothers1994isoperimetric, taylor, burago2013geometric}.

\medskip

\subsubsection{Connection between different fields} We focus only two of many possible strategies for \eqref{minen1}: 
\begin{enumerate} \item[(a)] A proof strategy of Gromov \cite{gromov1986isoperimetric}, based on Knothe's transport plans \cite{knothe1957contributions}, then improved based on the Brenier \cite{brenier} Optimal Transport theory (for which see e.g. \cite{santambrogio2015optimal}) by the authors of \cite{cordero2004mass, FMP}.
\item[(b)] The PDE method as in works by Trudinger \cite{trudinger1994isoperimetric}, brought to fruition by Cabr\'e and collaborators in \cite{cabre2000partial, ros2016sharp}, see the survey \cite{cabre2017isoperimetric} for details;
\end{enumerate}
 Both methods give inequalities like \eqref{minen1} in the continuum case, and we describe both below. 

In the discrete case the Cabr\'e setup was generalized by \cite{hamamuki} in the case of orthogonal product lattices, however we find that for the general case a semidiscrete optimal transport interpretation is more instructive. 

The theory of Semidiscrete Optimal Transport which we use here becomes a useful tool in several discretization problems, see \cite{gu2016variational} and the refences therein. The various connections are surveyed based on the work of Aurenhammer \cite{aurenhammer1987criterion} and Rybnikov \cite{rybnikov} in Section \ref{survey}. 

Furthermore, it is worth mentioning that the Wulff shape was studied in the original works by Wulff \cite{wulff} and Herring \cite{herring}, see also \cite{taylor, brothers1994isoperimetric} for a later more general setting extension. 

The discrete-continuum limit of Wulff shapes indicates a connection between the continuum theory and the discrete theory of isoperimetric shapes. This approach is often performed via a $\Gamma$-limit procedure of the underlying energy functionals. For this direction we mention the work \cite{petdel} and references therein. It is worth mentioning that Optimal Transport methods can be used also in that study, as done in \cite{cicalese2019maximal}. Studying the discrete isoperimetric problem without taking the limit of infinitely large configurations is relevant to crystallization problems in which the lattice structure is not fixed, but rather is determined by a minimization problem, such as proved in dimensions $2,3,8,24$ in \cite{theil2006proof, flatley2015face, cohn2017sphere, cohn2019universal, bedepe}.

\subsubsection{The Mass Transportation proof, following strategy (a)} A first way, based on to obtain the isoperimetric inequality is using the basic setup from mass transport theory, for more references see above. We consider a transport plan $T:\Omega\to H_\phi$, such that the pushforward under $T$ of the measure $\mu$ with density $1_\Omega$ to the measure $\nu$ with density $\frac{|\Omega|}{|H_\phi|}1_{H_\phi}$. Then we have 
\begin{equation}\label{1stcontpf}
 d|\Omega|\left(\frac{|H_\phi|}{|\Omega|}\right)^{\frac1d}=d \int_\Omega (\mathrm{det}\nabla T)^{\frac1d} \le \int_\Omega \mathrm{div} T =\int_{\partial^* \Omega} T\cdot \nu_\Omega \ d\mathcal H^{d-1}(x)=\int_{\partial^*\Omega}|T|_{H_\phi} \phi(\nu_\Omega)d\mathcal H^{d-1}(x) \le \mathrm{Per}_\phi(\Omega),
\end{equation}
where we used, in order, the transport condition on $T$, 
then the arithmetic-geometric mean inequality for the eigenvalues of the matrix $\nabla T$, 
the divergence theorem, and then we introduce the quasinorm $|x|_H:=\inf\{\lambda>0:\ x/\lambda \in H\}$, 
and $|T(x)|_{H_\phi}\le 1$ for $x\in\Omega$ because $T(\Omega)\subset H_\phi$ by definition, and the definition of $\mathrm{Per}_\phi(\Omega)$.

\medskip

Note that in general the first inequality in \eqref{1stcontpf} is an equality if and only if $\nabla T$ almost everywhere is a multiple of the identity matrix, and the second inequality of \eqref{1stcontpf} is an equality if and only if $\mathcal H^{d-1}$-almost everywhere on $\partial^*\Omega$ we have $T\cdot\nu_\Omega=|T|_{H_\phi}\phi(\nu_\Omega)$, which implies the characterization of the isoperimetric shape
 $\Omega=H_\phi$, of the optimal $T$, and of the precise constant in \eqref{1stcontpf}.

\medskip

\subsubsection{The PDE approach, following strategy (b)} A second approach is linked to the first one (at least for the case of convex $\Omega$ and monotone $T$) via the Brenier characterization of the (optimal) transport map as subdifferential of convex function, thus to use $T=\nabla u$ as the ``parameter" in our inequalities. This second approach focuses on the linear $\mathrm{div}T=\Delta u$ part of the inequalities, rather than on the nonlinear $\mathrm{det}\nabla T$ part, achieving the same conclusion even for general $\Omega$. We pass to describe this second strategy.

\medskip

We consider solutions $u:\Omega\to\R^d$ of 
\begin{equation}\label{contneuman}
\left\{\begin{array}{ll}\Delta u =\frac{P_\phi(\Omega)}{|\Omega|}& \text{ on }\Omega\\[3mm]
\partial_{\nu_\Omega} u= \phi(\nu_\Omega)&\text{ on }\partial\Omega.\end{array}\right.
\end{equation}
Note that imposing that $\Delta u$ is constant, the value of the constant is determined by the second equation of \eqref{contneuman} via the divergence theorem applied to $\nabla u$. Rather than the previous $T(\Omega)\subset H_\phi$, we now have the condition that $H_\phi\subset \partial u(\Omega)$, where $\partial u(\cdot)$ is the subdifferential measurable mapping associated to $u$:
\[
\partial u(E):=\left\{\begin{array}{ll}\{p\in\R^d:\ \exists x_0\in E,\ \forall x\in\overline\Omega,\ u(x)\ge u(x_0) + p\cdot(x-x_0)\}&\text{ if }u\in C^0(\overline\Omega)\\[3mm]
\{\nabla u(x_0):\ x_0\in E,\ \forall x\in\overline\Omega,\ u(x)\ge u(x_0) + \nabla u(x_0)\cdot(x-x_0)\}&\text{ if }u\in C^1(\overline\Omega).
\end{array}\right.
\]
For the case $u\in C^1$, the condition that with notation \eqref{wulff} we have $H_\phi\subset \partial u(\Omega)$, follows from the boundary condition in \eqref{contneuman}: if $p\in H_\phi$ then either $u(x)-p\cdot x$ achieves its minimum over $\overline \Omega$ at some interior point $x_0\in\Omega$ then $p=\nabla u(x_0)\in \partial u(\Omega)$. If the minimum is only achieved at points $x_0\in\partial\Omega$ then $\partial_{\nu_\Omega}(u(x)-p\cdot x)\le 0$, which together with the boundary datum in \eqref{contneuman} at $x_0$ gives $\phi(\nu_\Omega(x_0))=\partial_{\nu_\Omega}u(x_0)\le p\cdot\nu_\Omega(x_0)$, which implies that $p\notin H_\phi$, a contradiction.

\medskip

Now we can write, with notation $\Gamma_u:=\{x_0\in\Omega:\ \forall x\in\overline\Omega, u(x)\ge u(x_0)+\nabla u(x_0)\cdot(x-x_0)\}$, 
\begin{equation}\label{2ndcontpf}
|H_\phi|\le |\partial u(\overline\Omega)| =\int_{\nabla u(\Gamma_u)}dp \le \int_{\Gamma_u}|\mathrm{det}(\nabla^2 u)| \le \int_{\Gamma_u}\left(\frac{\Delta u}{d}\right)^d \le |\Gamma_u|\left(\frac{d\ P_\phi(\Omega)}{|\Omega|}\right)^d\le\frac{1}{d^d}\frac{[P_\phi(\Omega)]^d}{|\Omega|^{d-1}}.
\end{equation}
Besides the above justified property that $H_\phi\subset \partial u(\Omega)=\nabla u(\Gamma_u)$, we use $T_u:=\nabla u$ as a change of variables, then we use the arithmetic-geometric inequality for the eigenvalues of $\nabla^2 u$. The forelast equality follows from \eqref{contneuman} and the last one follows because $\Gamma_u\subset\Omega$. As above, the inequalities become equalities only if $\Gamma_u=\Omega, \nabla^2u =\lambda \mathrm{Id}$ and $\Omega=\lambda H_\phi$ for some $\lambda>0$.

\subsection{Discrete optimum isoperimetric inequalities via PDE and Optimal Transport, beyond product graphs}
A very large variety of isoperimetric inequalities on graphs are known, valid in varying generality, see e.g. \cite{bollobas1991edge,bezrukov1994isoperimetric} for edge-isoperimetric inequalities in the grid and in other graphs. See  \cite{block1957discrete} \cite{hadwiger1972gitterperiodische}\cite{bokowski1974ungleichung},\cite{schnell1991two}, \cite{brass1999isoperimetric} \cite{hillock2002inequalities} as well and \cite{chung2004discrete} for a general theory and the relation to spectral theory.

However, virtually all versions of the inequalities do not treat the question of determining the isoperimetric shapes, or discussing cases in which equality in the isoperimetric inequality can be reached. The only exception which we are aware of is the paper \cite{hamamuki}, which treats the case of a lattice $V=D\mathbb Z^d$, with $D=\mathrm{diag}(\lambda_1,\ldots,\lambda_n)$ a diagonal matrix, links only between nearest neighbors in the coordinate directions, and edge weights equal to surface areas of a box with sides parallel to the coordinate directions. The setup in that case is based on the Cabr\'e method from \eqref{2ndcontpf}. The maximum principle tools from \cite{hamamuki} can be seen as fitting within the broader theory of discretization of PDE tools, for which we refer to the above work for a broad list of references, and here we only mention the works of Merkov \cite{merkov1985second} and the in-depth treatment by Kuo and Trudinger, see e.g. \cite{kuo1990linear}, \cite{kuo1996positive}, \cite{kuo2000note} and the references therein.

\medskip

In the ``product case" $D\mathbb Z^d$ it is straightforward to obtain the discrete counterpart of the geometric-arithmetic mean inequality for eigenvalues of the Hessian, which in the continuum case reads
\begin{equation}\label{hessineq}
|\mathrm{det}(\nabla^2u)|^{1/d}=|\lambda_1\cdots\lambda_d|^{\frac1d}\le\frac{\lambda_1+\cdots+\lambda_d}{d}= \frac{\Delta u}d.
\end{equation}
Note that if $V=\lambda \mathbb Z$ the absolute value of the discrete Laplacian $|\Delta_{\lambda \mathbb Z}u(x)|=|u(x-\lambda) -2 u(x) + u(x+\lambda)|$ is equal to the size of the ``proximal" subdifferential and we have
\begin{eqnarray*}
\{p\in \mathbb R: \forall y\in \overline\Omega,\ u(y)\ge u(x) + p\cdot (x-y)\}=\partial u(x)&\le&\partial^{\mathrm{prox}}u(x):=\{p\in\mathbb R: u(x\pm\lambda)\ge u(x) \pm \lambda p\}\\
&=&\frac1{\lambda}[u(x) - u(x-\lambda), u(x+\lambda) - u(x)].
\end{eqnarray*}
For $D\mathbb Z^d = \prod_{i=1}^d \lambda_i \mathbb Z$ the nearest-neighbor differential is a product of the above, which allows to use the geometric-arithmetic mean inequality directly. 

\medskip

In \cite{hamamuki} the question of correctly geneneralizing the result to obtain isoperimetric constants in other lattices is mentioned as an interesting open problem at the end of their paper. This is the main motivation and direction of the present work. 

\subsection{The strategy: chain of inequalities \eqref{2ndcontpf} in the discrete case, 
and connection to \eqref{1stcontpf}}\label{summary}

The criteria we find for the existence of a discrete sharp isoperimetric inequality like \eqref{minen1} in geometric graphs, will include the possibility of adding specific edge weights. We will see in Section \ref{s6} that our results include and widely extend the ones from \cite{hamamuki}. On the other hand, we also see that the form \eqref{minen1} is not always the sharpest form of the inequality that one should expect, as exemplified by the case of the triangular graph, in which we get \eqref{mainthmintro} (see Section \ref{trigraph} for the proof). This is an interesting new level of complexity for the discrete isoperimetric problem, compared to the continuum isoperimetric inequality.

\medskip

The correct discrete analogue of the geometric-arithmetic inequality, mentioned as an open question in \cite{hamamuki}, turns out to be closely related to the Minkowski theorem in convex geometry, applied to the proximal subdifferentials of the discrete version of the map $u$. See Section \ref{sec4}. 

\medskip

Another new insight that appears in our generalized geometric framework, is that the whole theory of semidiscrete Optimal Transport comes to help, in a way seemingly related a discrete counterpart to proof strategy \eqref{1stcontpf}. More precisely, the $u$ from \eqref{2ndcontpf} can be interpreted as an Alexandrov weak solution to a Semidiscrete Optimal Transport problem see Theorem \ref{mainthmintro}, see e.g. \cite{benamoufroese} and  \cite{gutierrez2001monge, bakelman2012convex} for the more classical theory of Monge-Amp\'ere equations.

\subsubsection{Discrete chain of inequality paralleling \eqref{2ndcontpf}}
If $\overrightarrow{\partial\Omega}:=\{(x,y):\ x\in\Omega,\ y\notin\Omega,\ A(x,y)\neq 0\}$ and $g:\overrightarrow{\partial \Omega}\to \R$ is fixed, we solve a discrete Neumann value problem formulated as follows, for $\Omega\subset  V$. As noted by \cite{hamamuki}, the continuum equation \eqref{contneuman} must be replaced by the below differential inequality \eqref{neumannintro} in the discrete case. The upper bound on $\Delta_Au$ in \eqref{neumannintro} is fixed as in \cite{hamamuki} so that we are sure that a solution exists, however in some cases it is not optimal. We give a first characterization the sharp optimal constant in Section \ref{sec3}, however this is not explicit enough to help in practice, and further studies are left to future work. Therefore, we use the below bound \eqref{neumannintro} for our study. 
\begin{equation}\label{neumannintro}
\left\{\begin{array}{l}\Delta_A u(x) :=\sum_{y\in V}A^2(y,x)(u(x)- u(y)) \le \frac{1}{\sharp\Omega}\sum_{(x,y)\in\overrightarrow{\partial\Omega}}g(x,y)\quad\text{ for }x\in\Omega\\[3mm]
\frac{\partial u}{\partial_{\nu_\Omega}}=\frac{g}{A}\quad\text{ on } \overrightarrow{\partial\Omega}.
 \end{array}\right.
\end{equation}
The boundary condition in \eqref{neumannintro} has two intepretations: 
\begin{itemize}\item either the na\"ive one, stating that $u(y)-u(x)=\frac{g(x,y)}{A(x,y)}$ for all $(x,y)\in\overrightarrow{\partial\Omega}$, \item or the one set up by Hamamuki in \cite{hamamuki}, requiring that for each $y$ outside $\Omega$ having some neighbor in $\Omega$, there exist such neighbors $x$ such that $u(y)-u(x)=\frac{g(x,y)}{A(x,y)}$.
\end{itemize}
 The na\"ive interpretation will be discarded as it has the drawback that for general nonconvex domains $\Omega$ a solution to \eqref{neumannintro} may not exist, whereas the Hamamuki interpretation allows to find a solution. A more detailed explanation is included in Sections \ref{npnaive} and \ref{nphamamuki}.

\medskip

We assume that $u$ is a solution to \eqref{neumannintro} with Hamamuki interpretation for the boundary condition. To this discrete $u$ we associate subdifferentials $\partial_\Omega u(x)$ depending on the values of $u$ on $\Omega$ and a target shape determined by the boundary values in \eqref{neumannintro}:
\begin{eqnarray}\label{defhg}
\partial_\Omega u(x)&:=&\{ p \in \mathbb{R}^d :\ ( \forall \ z \in \overline{\Omega} ), \  u(x) \leq u(z) + p \cdot (x-z) \},\nonumber\\
\partial^{\mathrm{prox}}u(x)&:=&\{ p \in \mathbb{R}^d :\ ( \forall \ z:\ z\sim x), \  u(x) \leq u(z) + p \cdot (x-z) \},\nonumber\\
H_g&:=&\left\{p\in\mathbb R^d:\ \forall(x,y)\in\overrightarrow{\partial\Omega}\ p\cdot(y-x)\le \frac{g(x,y)}{A(x,y)}\right\}.
\end{eqnarray}
We will also write $\partial u(x)$ rather than $\partial_\Omega u(x)$ if the choice of $\Omega$ is clear from the context.

\medskip

\noindent The discrete analogue of \eqref{2ndcontpf} states that the following holds
\begin{multline}\label{proofscheme}
 |H_g|\stackrel{(a)}{\le} \left|\bigcup_{x\in\Omega}\partial u(x)\right|\stackrel{(b)}{=}\sum_{x\in\Omega}|\partial u(x)|\stackrel{(c)}{\le} \sum_{x\in\Omega}|\partial^{\mathrm{prox}}u(x)| \\
 \stackrel{(d)}{\le} \sum_{x\in \Omega}c_x\left(\Delta_A u(x)\right)^d \stackrel{(e)}{\le} (\max_{x\in\Omega}c_x) \sharp \Omega \left(\frac{1}{\sharp \Omega}\sum_{(x,y)\in\overrightarrow{\partial \Omega}}g(x,y)\right)^d:=\max_{x\in\Omega}c_x\frac{\left(\sharp_g\overrightarrow{\partial\Omega}\right)^d}{(\sharp\Omega)^{d-1}}.
\end{multline}
The above inequalities are proved and discussed as follows:
\begin{enumerate}
 \item[(a):] The boundary conditions of \eqref{neumannintro} ensure that $H_g\subset \bigcup_{x\in\Omega}\partial u(x):=\partial u(\Omega)$, like in the continuous case: fixing an element $p\in H_g$, if $u(x) - p\cdot x$ achieves its minimum over $x\in\overline\Omega$ on $\Omega$ then $p\in \partial u(\Omega)$; if not, then there exists $(x,y)\in\overrightarrow{\partial\Omega}$ such that $u(y)-p\cdot x<u(x)-p\cdot x$: we then have
 \[
 \frac{g(x,y)}{A(x,y)} = u(y) - u(x) < p\cdot (y-x),
 \]
 contradicting the assumption that $p\in H_g$. Furthermore, note that if equality holds in (a) and $u$ is the restriction of a convex function, then $H_g=\partial u(\Omega)$ and $\Omega$ is of the form $\Omega= K\cap V$ with $K\subset\mathbb R^d$ a convex set.
 \item[(b):] The subdifferentials of $u$ are essentially disjoint (i.e. they have zero measure intersection), as we show in Lemma \ref{subdiffdisjoint}. If equality also holds in (a) then the $\{\partial u(x), x\in\Omega\}$, form a partition of $H_g$.
 \item[(c):] In general, the proximal subdifferential $\partial^{\mathrm{prox}}u(x)$ includes $\partial_\Omega u(x)$ but is not equal to it. We find in Proposition \ref{proxsubdiff} a condition on the underlying graph edges $\vec E=\{(x,y):\ A(x,y)\neq 0\}$ which ensure that $\partial^{\mathrm{prox}}u(x)=\partial_\Omega u(x)\neq \emptyset$ holds at all $x\in\Omega$ only if $u$ is convex.
 \item[(d):] This step is a geometric discrete arithmetic-geometric mean inequality, which depends on the neighborhood geometry near $x\in\Omega$. Indeed, the volume of the polyhedron $\partial^{\mathrm{prox}}u(x)$ and the discrete Laplacian $\Delta_A u(x)=\sum_{y:y\sim x}A^2(x,y)(u(x)-u(y))$ coincide with an arithmetic and geometric means in the case of a rectangular lattice treated in \cite{hamamuki}. In general, we have a dilation-invariant bound 
 \begin{equation}\label{subdiffintro}
  |\partial^{\mathrm{prox}} u(x)| \le c_x \left(\Delta_A u(x)\right)^d,
 \end{equation}
 where the optimal constant $c_x=C_{\mathcal V_x}$ depends on the geometry of the neighbor directions $\mathcal V_x:=\{(y-x)/|y-x|:\ y\simeq x\}$ and on $A$, and is related to a version of the Minkowski theorem from convex geometry, described in Section \ref{sec4}. As indicated in Theorem \ref{optsd}, the optimum shape of $\partial^{\mathrm{prox}}u(x)$ so that an equality appears in this step, is unique up to translation: it will depend only on $\mathcal V_x, A$.
 \item[(e):] The quantity $C(\Omega, g)$ in \eqref{neumannintro} obtained formally from the divergence theorem for the operator $\Delta_A$, is not necessarily optimal for general geometries, however allows to solve \eqref{neumannintro} as discussed in \cite{hamamuki} in the product case. This value of $C(\Omega,g)$ is sufficient for sharp bounds in several important examples, as indicated in Section \ref{s6}. We find a characterization of the optimal constant in the general case in Theorem \ref{thmconstnpintro}, however we do not investigate its applications here, as it is based on a not well-understood discrete equation \eqref{veqn} on directed graphs. See section \ref{sec3} for details. Finally, note that if the $G$ is connected and there is an equality in (d), then the $c_x$ are all equal to each other. See Theorem \ref{specialisointro} and Section \ref{sec4} for details.
\end{enumerate}
\subsubsection{Interpretation via Semidiscrete Optimal Transport, following \eqref{1stcontpf}}\label{aleksintro}
We first recall, following \cite{benamoufroese}, the theory of semidiscrete optimal transport solutions in our setting. To imitate the framework \eqref{1stcontpf}, we fix 
\[
  \mu:=\frac{|H_\phi|}{\sharp\Omega}\sum_{x\in\Omega}\delta_x,\quad \nu:=1_{H_\phi}(x) dx.
\]
However, rather than finding a map satisfying $T_\sharp\mu=\nu$ (which is impossible since the pushforward by a measurable map of an atomic measure is always an atomic measure), we look for $T$ going in the opposite direction, and consider an ``optimal" map $T:H_\phi\to \mathbb R^d$ such that $T_\sharp \nu = \mu$, i.e. $T$ is also minimizing the transport cost $\int_{H_\phi}|x- T(x)|^2d\nu(x)$ under the constraint that $T_\sharp\nu=\mu$. In this case the map exists and, due to Brenier's theory \cite{brenier}, it can be represented as $T=\nabla \phi$ for a (unique up to summing constant) convex $\phi:H_\phi\to\mathbb R$. The map $\phi$ is a so-called \emph{Pogorelov solution} to the optimal transport problem between $\nu$ and $\mu$ with cost $|x-y|^2$. An \emph{Aleksandrov solution} to the same problem is a convex function $\phi^*:\mathbb R^d\to \mathbb R$ such that the subdifferential map $\partial \phi^*$ satisfies $|\partial \phi^*(E)|=\mu(E)$ for $E\subset\mathbb R^d$ and $\partial \phi^*(\Omega)=H_\phi$. For the Pogorelov solution there exist $c_x\in \mathbb R, C_x\subset H_\phi$ with $|C_x|=|H_\phi|/\sharp \Omega$ for all $x\in \Omega$, which are related to $\phi$ as follows:
\[
 \forall y\in H_\phi,\ \phi(y)=\max_{x\in\Omega}\{y\cdot x - c_x\}, \quad\quad \forall x\in \Omega,\ C_x=\{y\in H_\phi:\ \nabla\phi(y)=x\}=\{y\in H_\phi:\ \phi(y)=y\cdot x - c_x\}.
\]
The Legendre transform of a Pogorelov solution is a so-called Aleksandrov solution $\phi^*(x):=\sup_{y\in\mathbb R^d}\{x\cdot y - \phi(y)\}$, which satisfies 
\[
\forall x\in\Omega,\quad \phi^*(x)=c_x,\quad\text{and}\quad \partial\phi^*(x)=C_x.
\]
\subsection{Summary of new contributions of this work}
We start by formulating the optimization problem for finite sets $\mathcal V\subset \mathbb R^d$ (repeated in \eqref{subdopt1}), which plays a role in step (d) of \eqref{proofscheme}:
\begin{equation}\label{subdopt1intro}
 C_{\mathcal V}:=\max\left\{\left|\bigcap_{v\in\mathcal V} \{p:\ \langle p,v\rangle \le c_v\}\right|: \vec c=(c_v)_{v\in\mathcal V}\in\R^{\mathcal V},\ \sum_{v\in\mathcal V}c_v=1\right\}.
\end{equation}
We connect this with Minkowski's theorem \cite[Ch. 7]{alexandrov2005convex} and find that the optimal shape is uniquely determined provided that the origin is in the interior of $\mathrm{Conv}(\mathcal V)$ and that $\sum_v\in\mathcal V=0$. The precise statements are given in Lemma \ref{existcond} and Theorem \ref{optsd}. Then this is applied to the precise case of step (d) in \eqref{proofscheme} in Proposition \ref{equalitysubdiff}.

In Section \ref{sec5} we prove the following:
\begin{theorem}[See Theorem \ref{specialiso} for the complete statement]\label{specialisointro}
For $V,A,G$ as above, assume that local convexity condition \eqref{nbdcond} holds at each $x\in V$ and that $G$ is locally finite. Let $\Omega\subset V$ be a finite subset and let $\overline\Omega$ be the closure of $\Omega$ in $G$. Let $u:\overline{\Omega}\to \mathbb R$ be a solution of \eqref{neumannintro}. Then \eqref{proofscheme} holds and we have the following. 
\medskip

\noindent {\bf Necessary conditions for equality.} If equality holds in \eqref{proofscheme} then all $c_x=C_{\mathcal V_x}, x\in\Omega$ are equal, $G|_{\overline\Omega}$ is the dual graph of a face-to-face decomposition of $\partial u(\Omega)$ into convex polyhedra $\partial^{\mathrm{prox}}u(x),x\in\Omega$ of equal volume, and $u$ satisfies $\Delta_Au(x)=\sharp_g\overrightarrow{\partial\Omega}/\sharp\Omega$ in $\Omega$, $u(y)-u(x)=\frac{g(x,y)}{A(x,y)}$ for all $(x,y)\in\overrightarrow{\partial\Omega}$.
\medskip

\noindent {\bf Sufficient conditions for equality.} Let $\Omega\subset V$ be connected within $G$. Equality in the isoperimetric inequality in graph $G$ with weight $A$ is achieved by $\Omega$ if the following geometric conditions are met by $G,A$:
\begin{enumerate}
 \item The complex made of vertices and edges of $G|_{\Omega}\cup\overrightarrow{\partial\Omega}$ (note that vertices in $\overline{\Omega}\setminus\Omega$ are not included) is reciprocal to the collection of $d$- and $(d-1)$-cells of an equal-volume Voronoi tessellation of a convex polyhedron $H$.
 \item If $F_{x,y}$ denotes the $(d-1)$-dimensional facet of the Voronoi tessellation which is dual to edge $(x,y)$ of $G$, then the $\frac{A^2(x,y)|y-x|}{\mathcal H^{d-1}(F_{x,y})}$ takes the same value for all edges of $G$.
\end{enumerate}
 Under the above conditions, functions $u$ achieving equality in \eqref{ineqchain1} are precisely those of the form $u=\lambda u_{\mathrm{Alek}} + \ell$ where $\lambda>0$, $\ell$ is an affine function and $u_{\mathrm{Alek}}$ is the Aleksandrov solution to the optimal transport problem between $\frac{|H|}{\sharp\Omega}\sum_{x\in\Omega}\delta_x$ and $1_H(x)dx$ with cost $|x-y|^2$.
\end{theorem}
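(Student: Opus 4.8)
\textbf{Proof plan for Theorem \ref{specialisointro}.}
The plan is to carry out the chain \eqref{proofscheme} step by step and then analyze the equality cases, using the auxiliary results already stated (Lemma \ref{subdiffdisjoint} for step (b), Proposition \ref{proxsubdiff} for step (c), Proposition \ref{equalitysubdiff} together with Theorem \ref{optsd} for step (d), and the divergence-theorem computation for step (e)). First I would establish that a solution $u$ of \eqref{neumannintro} with the Hamamuki boundary interpretation exists under the local convexity condition \eqref{nbdcond} and local finiteness; this is essentially Hamamuki's argument (iterating the bound on $\Delta_A u$ and filling in boundary values along the prescribed neighbors), and it guarantees the objects in \eqref{proofscheme} are well-defined. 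Then the validity of \eqref{proofscheme} is just the concatenation (a)--(e): inclusion $H_g\subset\partial u(\Omega)$ from the boundary condition (proved in the excerpt), essential disjointness from Lemma \ref{subdiffdisjoint}, the inclusion $\partial_\Omega u(x)\subset\partial^{\mathrm{prox}}u(x)$ together with Proposition \ref{proxsubdiff}, the pointwise inequality \eqref{subdiffintro} with $c_x=C_{\mathcal V_x}$, and finally bounding $\sum_x c_x(\Delta_A u(x))^d$ using $\Delta_A u(x)\le \sharp_g\overrightarrow{\partial\Omega}/\sharp\Omega$ from the first line of \eqref{neumannintro}.

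For the \textbf{necessary conditions}, I would trace back the forced equalities in each step. Equality in (e) forces $\Delta_A u(x)=\sharp_g\overrightarrow{\partial\Omega}/\sharp\Omega$ for every $x\in\Omega$ and also $c_x(\Delta_A u(x))^d=(\max_x c_x)(\Delta_A u(x))^d$; since $\Delta_A u(x)>0$ (otherwise $\partial^{\mathrm{prox}}u(x)$ would be degenerate, contradicting equality in (a)), all $c_x$ coincide. Equality in (d) forces, via Theorem \ref{optsd}, that each $\partial^{\mathrm{prox}}u(x)$ is the (unique up to translation) extremal polyhedron for the cell-optimization problem \eqref{subdopt1intro}; equality in (c) with Proposition \ref{proxsubdiff} forces $u$ convex and $\partial^{\mathrm{prox}}u(x)=\partial_\Omega u(x)$; equality in (a)--(b) forces $\{\partial u(x):x\in\Omega\}$ to be a partition of $H_g$ (up to measure zero). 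The partition of a convex body $\partial u(\Omega)=H_g$ into the convex polyhedra $\partial u(x)$, all of equal volume, whose dual incidence structure is exactly $G|_{\overline\Omega}$, is precisely the statement that $G|_{\overline\Omega}$ is the dual graph of a face-to-face equal-volume decomposition, and the identity $u(y)-u(x)=g(x,y)/A(x,y)$ on $\overrightarrow{\partial\Omega}$ is what equality in (a) together with the boundary condition gives. The facet $F_{x,y}$ shared by $\partial u(x)$ and $\partial u(y)$ is the set of $p$ on which $u(z)-p\cdot z$ is minimized at both $x$ and $y$; its normal is $y-x$ and the relation between $\mathcal H^{d-1}(F_{x,y})$, $|y-x|$, $A^2(x,y)$ comes from writing $\Delta_A u(x)$ as a sum over facets of $A^2(x,y)|y-x|\,$times a cofactor, which under the equal-volume extremal condition must be proportional to $\mathcal H^{d-1}(F_{x,y})$; this is where the ratio in condition (2) appears.

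For the \textbf{sufficient conditions}, the construction is to read the hypotheses backwards: given the equal-volume Voronoi tessellation of $H$ reciprocal to $G|_{\Omega}\cup\overrightarrow{\partial\Omega}$, let $\mu=\frac{|H|}{\sharp\Omega}\sum_{x\in\Omega}\delta_x$ and $\nu=1_H\,dx$, take $u_{\mathrm{Alek}}$ the Aleksandrov solution of the semidiscrete transport problem (existence and uniqueness up to affine functions from the Aleksandrov/Brenier theory recalled in Section \ref{aleksintro}), so that $\partial u_{\mathrm{Alek}}(x)$ is exactly the Voronoi cell $C_x$ dual to $x$ and these cells partition $H$ with equal volume $|H|/\sharp\Omega$. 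One then checks that $u=\lambda u_{\mathrm{Alek}}+\ell$ solves \eqref{neumannintro} with the correct constant: the cell structure gives $\partial^{\mathrm{prox}}u(x)=\partial_\Omega u(x)=C_x$ (here local convexity \eqref{nbdcond} and reciprocity ensure the proximal and the global subdifferential agree), equal volumes give $c_x=C_{\mathcal V_x}$ all equal and equality in (d) by Theorem \ref{optsd}, and condition (2) is exactly what makes $\Delta_A u(x)$ constant over $x\in\Omega$ and equal to $\sharp_g\overrightarrow{\partial\Omega}/\sharp\Omega$, while the facet-normal identification gives the boundary condition $u(y)-u(x)=g(x,y)/A(x,y)$. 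Feeding this $u$ into \eqref{proofscheme} makes every inequality an equality, which is the claimed optimality; that these are the only such $u$ follows from the uniqueness in the Aleksandrov problem combined with the uniqueness in Theorem \ref{optsd}.

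\textbf{Main obstacle.} The delicate point I expect to be the crux is step (c) together with the precise dictionary in the equality analysis: showing that equality in \eqref{proofscheme} forces $u$ to be genuinely convex (not merely locally, at the level of proximal subdifferentials) and that the resulting decomposition of $H_g$ is \emph{face-to-face} — i.e. that the combinatorics of the polyhedral partition matches $G|_{\overline\Omega}$ edge-for-edge rather than being merely a coarser subdivision. This requires using Proposition \ref{proxsubdiff} and the local convexity hypothesis \eqref{nbdcond} carefully to rule out ``hidden'' facets and to ensure that adjacent cells $\partial u(x),\partial u(y)$ share a full $(d-1)$-face precisely when $(x,y)$ is an edge of $G$; matching this with the reciprocal-figure/Voronoi description, and propagating the equalities across the connected graph $\Omega$ so that all the local normalizations $A^2(x,y)|y-x|/\mathcal H^{d-1}(F_{x,y})$ are forced to a common value, is the technical heart of the argument.
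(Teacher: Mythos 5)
Your proposal is correct and follows essentially the same route as the paper: the chain \eqref{proofscheme}/\eqref{ineqchain1} assembled from Lemma \ref{subdiffdisjoint}, Proposition \ref{proxsubdiff} and Proposition \ref{equalitysubdiff}, the necessary conditions obtained by forcing equality in each step (with the same argument that equality in the last step pins $C_{\mathcal V_x}$ and $\Delta_A u$ to constants and recovers the exact boundary datum via the divergence theorem), and the sufficient conditions obtained by feeding the rescaled Aleksandrov solution of the semidiscrete transport problem back into the chain and using the uniqueness statements of Theorem \ref{optsd} and Proposition \ref{equalitysubdiff} to characterize all optimizers as $\lambda u_{\mathrm{Alek}}+\ell$. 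You also correctly identify the technical heart — matching the polyhedral partition face-for-face with the edges of $G$ and propagating the normalization constants along the connected graph — which is exactly where the paper invests its effort in the final steps of the sufficiency proof.
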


\begin{remark}
 Note that in the second part of the theorem, the Neumann boundary datum of $u_{\mathrm{Alek}}$ is not specified, and we do not have a simple characterization that works ingeneral. However in several examples we are able to deduce it from the geometry of $G,A, H$ case by case.
\end{remark}

The full development of consequences of the above theorem is left to future work. We selected the following weaker version of Theorem \ref{specialisointro} as our main tool to construct new examples of isoperimetric inequalities in periodic graphs.

\begin{proposition}[For a complete statement see Proposition \ref{corstrong}]\label{corstrongintro}
 With notations for $V,G,A,g$ be as in the beginning of Theorem \ref{specialisointro}, assume furthermore that:
 \begin{enumerate}
  \item $G$ is reciprocal to a triangulation of $\mathbb R^d$ by equal volume simplices.
  \item There exist constants $C_1, C_2>0$ such that 
  \begin{equation}\label{conststufintro}
  \frac{\mathcal H^{d-1}(F_{x,y})}{|x-y|A^2(x,y)}=C_1\quad \text{and} \quad \frac{\mathcal H^{d-1}(F_{x,y})|x-y|}{g^2(x,y)}=C_2
  \end{equation}
 for all edges $(x,y)$ of $G$ and all corresponding dual $(d-1)$-facets of the tessellation.
 \end{enumerate}
 Then if $H_g$ is as in \eqref{defhg}, for all $\Omega\subset V$ we have 
\begin{equation}\label{isopineqallintro}
(\sharp \Omega)^{d-1}\ \le\ \frac{C_{\mathcal V}}{|H_g|}\ (\sharp_g\overrightarrow{\partial \Omega})^d,
\end{equation}
with equality if and only if a multiple of $H_g$ is tessellated by simplices dual to points in $X$.
\end{proposition}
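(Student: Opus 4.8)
The strategy is to specialize the general chain of inequalities \eqref{proofscheme} to the hypotheses of Proposition \ref{corstrongintro} and show that each inequality becomes an equality precisely under the stated tessellation condition, then read off the constant. First I would solve the Neumann problem \eqref{neumannintro} for a given finite $\Omega$ using the Hamamuki interpretation of the boundary condition; since $G$ is locally finite and reciprocal to a triangulation by equal volume simplices, the local convexity condition \eqref{nbdcond} at each vertex follows from the fact that the neighbor directions $\mathcal V_x$ are the same set $\mathcal V$ (up to the reciprocity correspondence) and their convex hull contains the origin in its interior. With a solution $u$ in hand, the chain \eqref{proofscheme} gives $|H_g| \le (\max_x c_x)\,(\sharp_g\pomega)^d / (\sharp\Omega)^{d-1}$, which after rearranging is already the desired \eqref{isopineqallintro} once we show $\max_x c_x = C_{\mathcal V}$. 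The latter is immediate because hypothesis (1) forces every $\mathcal V_x$ to be a translate/image of the fixed finite set $\mathcal V$ entering \eqref{subdopt1intro}, so $c_x = C_{\mathcal V_x} = C_{\mathcal V}$ for all $x$; here I would invoke Theorem \ref{optsd} and Lemma \ref{existcond} to justify that the optimal cell constant depends only on $\mathcal V$ and $A$, and that under the equal-volume simplex reciprocity the relevant $A$-weighted data is the same at every vertex — this is exactly what the first identity in \eqref{conststufintro} encodes, since $\mathcal H^{d-1}(F_{x,y})/(|x-y|A^2(x,y)) = C_1$ makes the proximal subdifferential volume formula homogeneous across vertices.

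\medskip

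Next I would analyze the equality case. Tracing through steps (a)--(e): step (d) is an equality at every $x$ iff each proximal subdifferential $\partial^{\mathrm{prox}}u(x)$ has the optimal Minkowski shape determined by $\mathcal V_x$ and $A$ (Theorem \ref{optsd} / Proposition \ref{equalitysubdiff}); step (e) is an equality iff $\Delta_A u(x) = \sharp_g\pomega/\sharp\Omega$ is constant on $\Omega$ and all $c_x$ agree (which we already have); step (c) is an equality iff $\partial^{\mathrm{prox}}u(x) = \partial_\Omega u(x)$ for all $x$, which by Proposition \ref{proxsubdiff} together with the local convexity condition forces $u$ to be (the restriction of) a convex function; steps (a)--(b) then say that the cells $\{\partial u(x)\}_{x\in\Omega}$ partition $H_g$ and $\Omega = K \cap V$ for a convex $K$. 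Combining: equality holds iff $H_g$ (up to a dilation factor $\lambda$ coming from the freedom $u \mapsto \lambda u + \ell$) is decomposed face-to-face into the optimal cells, and by the second identity in \eqref{conststufintro} these optimal cells are exactly the Voronoi cells of the dual simplicial points — i.e. a multiple of $H_g$ is tessellated by the simplices dual to points of $X$. This is where I would cite the characterization of $u$ as the Aleksandrov solution from Theorem \ref{specialisointro}: the convex $u$ whose proximal subdifferentials are prescribed Voronoi cells of equal volume is, by the semidiscrete OT correspondence recalled in Section \ref{aleksintro}, exactly $\lambda u_{\mathrm{Alek}} + \ell$.

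\medskip

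The main obstacle I anticipate is the bookkeeping that connects the two conditions in \eqref{conststufintro} to the two geometric conditions in Theorem \ref{specialisointro}, namely verifying that ``reciprocal to a triangulation by equal volume simplices'' plus the ratio normalizations really do make the optimal cell of step (d) coincide with a Voronoi cell of the tessellation dual to $X$ — i.e. that the Minkowski-optimal polyhedron for the weighted data $(\mathcal V, A)$ at $x$ is the Voronoi cell, not merely some affinely equivalent polyhedron. Concretely one must check that the weights $A^2(x,y)$, via the first relation in \eqref{conststufintro}, are precisely the weights that turn the Minkowski problem solution into the (possibly power-weighted) Voronoi diagram of the simplex vertices, and that the second relation in \eqref{conststufintro} fixes the scale so that $|H_g|$ equals the total volume of that Voronoi tessellation. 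This is essentially a compatibility computation between the normalization $\sum_v c_v = 1$ in \eqref{subdopt1intro}, the divergence-theorem constant in \eqref{neumannintro}, and the Euclidean geometry of the dual simplices; it is routine but needs care, and I would set it up by writing the volume of a single optimal cell both as $C_{\mathcal V}(\Delta_A u(x))^d$ and as a sum over its facets $F_{x,y}$ of the ``pyramid'' volumes $\tfrac1d \mathcal H^{d-1}(F_{x,y}) \cdot h_{x,y}$ with $h_{x,y}$ the supporting-hyperplane distance, then matching the two using \eqref{conststufintro}. Everything else — existence of $u$, disjointness of subdifferentials, the inclusion $H_g \subset \partial u(\Omega)$ — is quoted directly from the lemmas and propositions already established earlier in the paper.
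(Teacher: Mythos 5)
Your overall route is the paper's route: specialize the chain \eqref{proofscheme}/\eqref{ineqchain1}, quote the necessary conditions from Theorem \ref{specialisointro} and Proposition \ref{equalitysubdiff}, and identify the equality case with the tessellation condition via the Aleksandrov solution. However, the one step you explicitly leave open --- ``verifying that the Minkowski-optimal polyhedron for the weighted data at $x$ is the Voronoi/dual cell, not merely some affinely equivalent polyhedron'' --- is precisely the new content of this proposition relative to Theorem \ref{specialisointro}, and the paper closes it not by the pyramid-volume bookkeeping you sketch but by a one-line structural observation: because the dual cells are \emph{simplices}, each $\mathcal V_x$ has exactly $d+1$ elements, so every nonempty bounded $\partial^{\mathrm{prox}}u(x)=\bigcap_{v\in\mathcal V_x}H_v(c_v)$ is a simplex with the prescribed facet normals and is therefore automatically a dilated translate of the unique Minkowski optimizer of \eqref{subdopt1intro}. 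Hence step (d) of \eqref{proofscheme} is an equality for \emph{any} admissible $u$ with positive Laplacian, which is what makes the sufficiency direction go through: one then only needs the first ratio in \eqref{conststufintro} together with the equal-volume hypothesis to see that the dilation factor, and hence $C_{\mathcal V_x}$, is the same at every vertex, and connectedness of $G$ propagates the normalization. Your proposed ``compatibility computation'' would likely succeed, but without the simplex-uniqueness remark it is not routine, and as written the sufficiency half of the equivalence is not actually established.

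Two smaller points. First, your claim that hypothesis (1) forces every $\mathcal V_x$ to be a translate or image of one fixed set $\mathcal V$ is false in general (the paper's FCC example has two non-congruent cell types); what is true, and what you need, is only that the constants $C_{\mathcal V_x}$ coincide, which follows from \eqref{conststufintro} plus equal volumes as above. Second, the local convexity condition \eqref{nbdcond} is part of the standing hypotheses imported from Theorem \ref{specialisointro}, so it does not need (and does not admit) the derivation you sketch from the neighbor directions alone. Finally, for the sufficiency direction you should note, as the paper does, that when the dual cells of $\Omega$ tessellate a convex multiple of $H_g$, each exterior vertex has a unique interior neighbor, so the na\"ive Neumann problem with $\Delta_Au\equiv\sharp_g\overrightarrow{\partial\Omega}/\sharp\Omega$ is actually solvable (Lemma \ref{naivegood}) and coincides with the Hamamuki formulation; the mere solvability of the inequality version \eqref{neumannintro} is not enough to exhibit an equality case.
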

As an illustration of possible applications, this corollary is then applied in Section \ref{s6} to find isoperimetric shapes in a variety of lattices beyond the product case from \cite{hamamuki}, including the honeycomb lattices and their affine deformations with various weights, the $1$-skeleton of the Voronoi cells of the BCC lattice, as well as products thereof. A whole class of graphs that fit our criteria are the dual graphs of Coxeter triangulations. A classification of the full range of applicability of the above criteria is left for future work.

\medskip

An example of particular interest is that of the triangular graph, i.e. the $1$-skeleton of a tessellation of $\mathbb R^2$ by equilateral triangles. If we take this graph with equal weights, the criteria of Proposition \ref{corstrongintro} or Theorem \ref{specialisointro} do not apply. However, they do apply in the honeycomb graph dual to it, and in Section \ref{trigraph} we use this duality and a counting argument in order to find the following isoperimetric inequality, valid for all finite subset $\Omega$ in the triangular graph (see \eqref{mainthm}):
\begin{equation}\label{mainthmintro}
\frac{(\sharp\overrightarrow{\partial\Omega}-6)^2}{4\sharp\Omega - \sharp\overrightarrow{\partial\Omega} +2}\ge 12.
\end{equation}
Note that this inequality is more complicated than \eqref{isopineqallintro}, and is sharp as equality is reached by regular hexagonal shapes. This shows that in the discrete case inequalities like \eqref{minen1} may not be sharp even in very regular cases, making the study of sharp discrete isoperimetric inequalities mathematically richer than the continuum case (note that in the limit $1\ll\sharp\overrightarrow{\partial\Omega}\ll\sharp\Omega$, we get that \eqref{mainthmintro} recovers a form close to \eqref{minen1}).

\medskip

A possible direction of improvement of the result of Theorem \ref{specialisointro} consists in studying equation \eqref{neumannintro} in further detail, and determining further properties of the optimum Laplacian upper bound which we can afford. This direction is pursued in Section \ref{sec3}. Interpreting the constraint $\partial u/ \partial \nu_\Omega=g$ in the Hamamuki sense mentioned after \eqref{neumannintro}, we have the following definition of the optimal constant
\begin{equation}\label{minmaxintro}
 C(g,\Omega):=\inf\left\{\max_{x\in\Omega}\Delta u(x):\ u:\oo\to\mathbb R,\ \frac{\partial u}{\partial\nu_\Omega}=g\right\}.
\end{equation}
Due to \cite{hamamuki}, we have the upper bound $C(g,\Omega)\le \frac{\sharp_g\overrightarrow{\partial\Omega}}{\sharp\Omega}$ (see notation \eqref{proofscheme} for $\sharp_g$), and we complement this by a lower bound in Corollary \ref{cor:lbcgomega}. Finally, in \eqref{cgomegal2}, we find the following:
\begin{theorem}\label{thmconstnpintro}
Let $\mathcal L'$ be the class of directed graphs 
 \[
 C(g,\Omega)=\min\left\{\frac{\sum_{y\in\bo}g(y)v_{\vec L'}(y)}{\sum_{x\in\Omega}v_{\vec L'}(x)}:\ \vec L'\in\mathcal L'\right\},
\]
where $v_{\vec L'}$ satisfies the dual equation \eqref{veqn1} on the graph $\vec L'$.
\end{theorem}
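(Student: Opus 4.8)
The plan is to recognize $C(g,\Omega)$ from \eqref{minmaxintro} as the value of a linear program, and to identify its dual. First I would unfold the Hamamuki boundary condition: a function $u:\oo\to\mathbb R$ satisfies $\partial u/\partial\nu_\Omega = g$ in the Hamamuki sense precisely when, for every $y\in\bo$ with a neighbour in $\Omega$, there is at least one such neighbour $x$ with $u(y)-u(x)=g(x,y)/A(x,y)$, while for \emph{all} edges $(x,y)\in\pomega$ one has $u(y)-u(x)\le g(x,y)/A(x,y)$ (this inequality being forced if we want $\max_{x\in\Omega}\Delta_A u(x)$ to be controlled; I would make this reduction explicit, as it is implicit in the derivation of \eqref{neumannintro}). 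Once the constraints are linear in the unknowns $(u(x))_{x\in\oo}$ and the scalar $t=\max_{x\in\Omega}\Delta_A u(x)$, minimizing $t$ subject to $\Delta_A u(x)\le t$ for $x\in\Omega$ together with the boundary (in)equalities is a finite-dimensional linear program, so strong LP duality applies and $C(g,\Omega)$ equals the optimum of the dual.

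Next I would compute the dual. Introduce multipliers $v(x)\ge 0$ for each interior constraint $\Delta_A u(x)\le t$, $x\in\Omega$, and multipliers (of appropriate sign) for each boundary edge constraint. Stationarity in the variable $t$ forces $\sum_{x\in\Omega}v(x)=1$ (after normalization), so the dual objective becomes $\sum_{y\in\bo}g(y)\,\big(\text{combination of edge multipliers at }y\big)$. Stationarity in each $u(x)$ gives a balance equation: the discrete divergence of the flow built from the $v$'s and the edge multipliers must vanish at interior vertices. This is exactly a discrete transport/flux equation on a directed graph — the support of the active edge multipliers singles out, at each boundary vertex $y$, one (or more) incoming edge realizing the equality $u(y)-u(x)=g(x,y)/A(x,y)$, which is the combinatorial content of the class $\mathcal L'$ of directed graphs. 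I would define $v_{\vec L'}$ to be the solution of the resulting balance equation \eqref{veqn1} on a fixed $\vec L'\in\mathcal L'$, normalized by $\sum_{x\in\Omega}v_{\vec L'}(x)=1$ up to rescaling, so that the dual objective reads $\big(\sum_{y\in\bo}g(y)v_{\vec L'}(y)\big)\big/\big(\sum_{x\in\Omega}v_{\vec L'}(x)\big)$; minimizing over the discrete choices $\vec L'\in\mathcal L'$ yields the stated formula. Consistency with the known bound $C(g,\Omega)\le\sharp_g\pomega/\sharp\Omega$ of \cite{hamamuki} should be checked by exhibiting the particular $\vec L'$ (and its $v_{\vec L'}$) realizing the naive choice, and the lower bound of Corollary \ref{cor:lbcgomega} should drop out as a weak-duality inequality for any single $\vec L'$.

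The main obstacle is handling the combinatorial, non-convex nature of the Hamamuki condition correctly inside the LP framework: the choice of \emph{which} incoming edge at a boundary vertex is active is discrete, so the overall problem is a minimum over finitely many LPs rather than a single LP. I would deal with this by fixing the active-edge pattern first — this is precisely the data of an element $\vec L'\in\mathcal L'$ — solving the (now genuinely linear) restricted problem by duality to get the fraction above, and only then taking the minimum over $\vec L'$; one must argue that the optimal $u$ for the original problem does realize some admissible pattern, which follows because at the optimum the set of boundary edges attaining equality is nonempty at every relevant $y$ (otherwise one could perturb $u$ near $y$ and strictly decrease some $\Delta_A u(x)$, contradicting optimality). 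A secondary technical point is well-posedness of \eqref{veqn1}: one needs the directed graph $\vec L'$ to be such that the balance equation has a solution, positive on $\Omega$, unique up to scaling — this is a Perron–Frobenius / flow-existence statement on $\vec L'$, and I expect it to hold exactly for the graphs in the class $\mathcal L'$ by construction, with connectivity of $G|_{\oo}$ as the hypothesis that makes it go through.
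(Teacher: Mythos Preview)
Your strategy is essentially the paper's: first observe that the Hamamuki boundary condition forces a decomposition of the problem into finitely many sub-problems indexed by the choice of one ``active'' boundary edge per $y\in\bo$ (the class $\mathcal E'=\mathcal L'$), then for each fixed $\vec L'$ solve a linear problem whose value is the displayed ratio, and finally minimize over $\vec L'$. The paper arrives at the per-$\vec L'$ formula somewhat differently from you, though: rather than invoking LP strong duality and reading off $v_{\vec L'}$ as the vector of multipliers, it analyzes the directed operator $A_{\vec E'}$ directly, shows its kernel is the constants so its image has codimension one, identifies $v_{\vec L'}$ as a generator of $(\mathrm{Im}\,A_{\vec E'})^\perp$ via the computation leading to \eqref{veqn1}, proves $v_{\vec L'}>0$ on $\Omega$ by a discrete maximum-principle contradiction (Proposition~\ref{vnochange}) rather than Perron--Frobenius, and then uses a perturbation optimality argument (Lemma~\ref{condoptlem}, Proposition~\ref{optconst}) to conclude that the optimal $\bar u$ has $\Delta\bar u$ constant on $\Omega$, after which the compatibility condition $\langle v_{\vec L'},(f,g)\rangle=0$ yields the ratio. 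Your packaging via LP duality is cleaner and more conceptual; the paper's hands-on route has the advantage of making the positivity of $v_{\vec L'}$ and the constancy of $\Delta\bar u$ at the optimum explicit as standalone statements.

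One point to tighten: your claim that the inequality $u(y)-u(x)\le g(x,y)/A(x,y)$ holds for \emph{all} $(x,y)\in\pomega$ is not part of the Hamamuki condition \eqref{newbdcond} as written, and the paper's reformulation \eqref{minmaxlarge} imposes only the equality on the selected edge in $\vec E'$, with no constraint on the others. Once you decompose over $\vec L'$ this extra inequality is irrelevant anyway (each boundary value $u(y)$ is pinned by the single active edge), so you can simply drop that step rather than try to justify it.
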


\subsection{Structure of the paper}
We introduce basic results on weighted graphs and discrete PDEs in Section \ref{sec2}. Section \ref{sec4} describes the optimal subdifferential shape problem \eqref{subdiffintro}. Section \ref{sec5} includes the proof of Theorem \ref{specialisointro} and Proposition \ref{corstrongintro}. This section also includes a small survey on results which are helpful for reformulating our setup within the theory of weighted Voronoi tessellations, liftings and reciprocal graphs. We however focus on the formulation in terms of reciprocal graphs and semidiscrete optimal transport. Section \ref{s6} provides several explicit examples, beyond the ``product case'', which means here the case of a lattice of the form $D\mathbb Z^d$ in which $D$ is a diagonal matrix. Note that this case, treated in \cite{hamamuki}, is a product of $d$ rescaled copies of $\mathbb Z$. Section \ref{sec3} is devoted to the proof of Theorem \eqref{thmconstnpintro}.

\medskip

{\bf Acknowledgements: } MP is supported by the Chilean Fondecyt Iniciaci\'on grant number 11170264 entitled "Sharp asymptotics for large particle systems and topological singularities".

\section{Preliminaries for the discrete case}\label{sec2}

\subsection{Some useful convex analysis lemmas}

\begin{lemma}\label{lem:difincl} Let $d\ge 1$ be an integer and let $G=(V,L)$ be a graph where $V\subset\mathbb R^d$. Suppose that $\Omega\subset V$ is finite. For any function $u:\overline{\Omega} \to \mathbb{R}$. For all $x\in \Omega$ there holds
\[
\displaystyle \partial_\Omega u(x) \subset \bigcap_{v:\{x,x+v\}\in L} \{p \in \mathbb{R}^d : p \cdot v \leq u(x+v) - u(x)\}.
\]
\end{lemma}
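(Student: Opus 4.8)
The statement to prove is essentially a definitional unpacking: if $p \in \partial_\Omega u(x)$, then by definition $u(x) \le u(z) + p\cdot(x-z)$ for all $z \in \overline{\Omega}$. In particular, for any neighbor $z = x+v$ of $x$ (where $\{x,x+v\}\in L$), which lies in $\overline{\Omega}$ since it is a vertex adjacent to $x\in\Omega$, we get $u(x) \le u(x+v) + p\cdot(x-(x+v)) = u(x+v) - p\cdot v$, i.e. $p\cdot v \le u(x+v) - u(x)$. So the plan is simply to take an arbitrary $p$ in the left-hand set, fix an arbitrary neighbor direction $v$, and check that $p$ satisfies the defining inequality of the corresponding half-space on the right.

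The plan is to argue as follows. Fix $x\in\Omega$ and let $p\in\partial_\Omega u(x)$; we must show that for every $v$ with $\{x,x+v\}\in L$ we have $p\cdot v\le u(x+v)-u(x)$. First I would note that $x+v\in\overline{\Omega}$: indeed $x+v$ is a vertex of $G$ adjacent to $x\in\Omega$, so it belongs to the closure $\overline{\Omega}$ of $\Omega$ in $G$ (the closure adjoins to $\Omega$ all its graph-neighbors). Then I would apply the defining property of the subdifferential $\partial_\Omega u(x)$ with the choice $z=x+v$: since $p\in\partial_\Omega u(x)$ means $u(x)\le u(z)+p\cdot(x-z)$ for all $z\in\overline{\Omega}$, specializing to $z=x+v$ gives
\[
u(x)\ \le\ u(x+v)+p\cdot\bigl(x-(x+v)\bigr)\ =\ u(x+v)-p\cdot v,
\]
which rearranges to $p\cdot v\le u(x+v)-u(x)$. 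Hence $p$ lies in the half-space $\{p\in\mathbb R^d:\ p\cdot v\le u(x+v)-u(x)\}$. Since $v$ was an arbitrary neighbor direction at $x$, $p$ lies in the intersection over all such $v$, which is exactly the right-hand side. As $p\in\partial_\Omega u(x)$ was arbitrary, the inclusion follows.

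There is essentially no obstacle here: the only point requiring a word of care is the membership $x+v\in\overline{\Omega}$, which is immediate from the definition of the closure of $\Omega$ within the graph $G$ as used throughout the paper. If one prefers, one can avoid even mentioning the closure by observing that the statement is only asserting containment in the intersection over those $v$ with $\{x,x+v\}\in L$, and for each such $v$ the vertex $x+v$ is by definition in $\overline{\Omega}$, so the subdifferential inequality at $x$ may be tested against it. No convexity of $u$, no finiteness beyond what is stated, and no properties of $A$ are needed.
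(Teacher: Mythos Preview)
Your proof is correct and follows essentially the same approach as the paper's own proof: take $p\in\partial_\Omega u(x)$, observe that any neighbor $x+v$ lies in $\overline\Omega$, and specialize the subdifferential inequality to $z=x+v$ to obtain $p\cdot v\le u(x+v)-u(x)$.
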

\begin{proof}
We assume that $\partial_\Omega^-u(x)\neq\emptyset$, else there is nothing to prove. Let $p \in \partial_\Omega u(x)$, which means that for all $z \in \overline{\Omega}$ we have that 
\[
  u(x) \leq u(z)- p \cdot (z-x).
\]
If we test the above for $z=x+v$ with $v$ such that $\{x,v\}\in L$, we have that $z \in \overline{\Omega}$, by the definition of $\oo$, thus the above inequality holds for these choices, and it reads
\[
u(x + v) \geq  p \cdot v  + u(x).
\]
We thus have, reordening the inequality, for each $v$ as above,
\begin{equation}\label{ubd}
 p \cdot v \leq u(x + v) - u(x)   .
\end{equation}
\end{proof}

\begin{lemma}\label{subdiffdisjoint}
Assume that $\oo\subset \R^d$ is a closed set and for $u:\oo\to\R$ and $x\in\oo$ define 
\[
 \partial_{\oo}u(x):=\{p\in\mathbb R^d:\ (\forall y\in\oo),\ \langle p, y-x\rangle \le u(y)-u(x)\}.
\]
Then for $x\neq z\in\oo$ the sets $\partial_{\oo}u(x),\partial_{\oo}u(z)$ intersect at most their boundaries.
\end{lemma}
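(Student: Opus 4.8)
The plan is to show that if $p$ lies in the interior of both $\partial_{\oo}u(x)$ and $\partial_{\oo}u(z)$ for distinct $x,z\in\oo$, then we reach a contradiction; this forces the two subdifferentials to meet only along their topological boundaries. First I would fix $p\in\mathrm{int}\,\partial_{\oo}u(x)\cap\mathrm{int}\,\partial_{\oo}u(z)$. The defining inequality for $p\in\partial_{\oo}u(x)$ gives $\langle p,z-x\rangle\le u(z)-u(x)$, and symmetrically $p\in\partial_{\oo}u(z)$ gives $\langle p,x-z\rangle\le u(x)-u(z)$. Adding these two yields $0\le 0$, so both must in fact be equalities: $\langle p,z-x\rangle = u(z)-u(x)$. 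The point of using interior membership is then to perturb: since $p$ is interior to $\partial_{\oo}u(x)$, for small $\varepsilon>0$ the vector $p+\varepsilon(z-x)$ also lies in $\partial_{\oo}u(x)$, hence $\langle p+\varepsilon(z-x),z-x\rangle\le u(z)-u(x)=\langle p,z-x\rangle$, i.e. $\varepsilon|z-x|^2\le 0$, which contradicts $x\neq z$.

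An alternative, perhaps cleaner, route is the classical convex-analysis fact that subdifferentials of a single function at distinct points have disjoint interiors, restated here for the ``discrete'' subdifferential $\partial_{\oo}u$. Define the convex, lower-semicontinuous extension $\tilde u:\R^d\to\R\cup\{+\infty\}$ by $\tilde u(y):=\sup\{u(x_0)+\langle q,y-x_0\rangle : x_0\in\oo,\ q\in\partial_{\oo}u(x_0)\}$ on the relevant domain (or simply work with the support-function characterization), and observe that $p\in\partial_{\oo}u(x)$ precisely when $x$ is a point where the affine function $y\mapsto u(x)+\langle p,y-x\rangle$ touches $\tilde u$ from below on $\oo$. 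If $p$ were interior to two such sets, the same $p$ would be the ``slope'' of a supporting affine function touching at both $x$ and $z$, and the strict-interior perturbation argument above shows this is impossible unless $x=z$.

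I expect the only genuinely delicate point to be making precise what ``intersect at most their boundaries'' means and ensuring the perturbation step is valid, i.e. that $p+\varepsilon(z-x)\in\partial_{\oo}u(x)$ for $\varepsilon$ small when $p$ is interior. This is immediate from the definition of interior once one knows $\partial_{\oo}u(x)$ is convex (it is an intersection of half-spaces, as already recorded in Lemma \ref{lem:difincl}), so there is no real obstacle — the argument is short. I would close by noting that since both $\partial_{\oo}u(x)$ and $\partial_{\oo}u(z)$ are closed convex sets whose interiors are disjoint, their intersection is contained in $\partial(\partial_{\oo}u(x))\cap\partial(\partial_{\oo}u(z))$, which has $d$-dimensional Lebesgue measure zero; this is exactly the ``essentially disjoint'' statement invoked in step (b) of \eqref{proofscheme}.
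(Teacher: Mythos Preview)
Your proposal is correct and follows essentially the same line as the paper's proof: from $p\in\partial_{\oo}u(x)\cap\partial_{\oo}u(z)$ you test the two defining inequalities at $y=z$ and $y=x$ respectively and deduce the equality $\langle p,z-x\rangle=u(z)-u(x)$, which forces $p$ onto the boundary of each subdifferential. The paper simply asserts that last implication (equality in a defining half-space constraint means $p$ lies on the boundary of the intersection), whereas you spell it out via the perturbation $p\mapsto p+\varepsilon(z-x)$; your alternative route through a convex extension is unnecessary here.
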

\begin{proof}
The condition $p\in\partial_{\oo}u(x)\cap\partial_{\oo}u(z)$ directly translates into 
\[
(\forall y\in\oo),\ \langle p, y-x\rangle \le u(y)-u(x) \quad\mbox{and}\quad \langle p, y-z\rangle \le u(y)-u(z).
\]
In particular, we test the first condition with $y=z$ and the second with $y=x$ and we find
\[
 \langle p,x-z\rangle = u(x)-u(z).
\]
This implies that $p$ belongs to the boundaries of the two subdifferentials, because the equality in the inequality from the definition of the subdifferentials is achieved at $p$.
\end{proof}

\begin{proposition}\label{proxsubdiff}
Let $G$ be a graph with vertices a discrete set $V\subset\mathbb R^d$ and edges $(x,y)$ such that for all $x\in V$, denoting $y\sim x$ the neighborhood relationship in $G$, there holds 
\begin{equation}\label{nbdcond}
\mathrm{conv}\left(\{y\in V:\ x\sim y\}\right)\cap V = \{x\}\cup\{y\in V:\ x\sim y\}.
\end{equation}
If $u:\overline\Omega\to\mathbb R$ satisfies $\partial^{\mathrm{prox}}u(x)=\partial_\Omega u(x)\neq \emptyset$ for all $x\in\Omega$, then $u$ is the restriction of a convex function $\tilde u:\mathbb R^d\to \mathbb R$.
\end{proposition}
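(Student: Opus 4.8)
The plan is to construct $\tilde u$ as a finite, piecewise affine extension of the lower convex envelope of the finite point set $S:=\{(z,u(z)):\ z\in\overline{\Omega}\}\subset\mathbb R^{d+1}$, and then to verify that every point of $S$ actually lies on the lower hull of $S$. Since the lower hull of a finite point set is, over the convex hull of its projection to $\mathbb R^d$, the graph of a maximum of finitely many affine functions, and such a maximum is a finite convex function on all of $\mathbb R^d$, this is exactly the assertion that $u$ is the restriction to $\overline{\Omega}$ of a finite convex $\tilde u:\mathbb R^d\to\mathbb R$. Equivalently, I must show that for every $z\in\overline{\Omega}$ the set $\partial_{\overline{\Omega}}u(z):=\{p\in\mathbb R^d:\ \forall w\in\overline{\Omega},\ u(z)\le u(w)+p\cdot(z-w)\}$ (as in Lemma \ref{subdiffdisjoint}) is nonempty, i.e.\ that $u$ admits a supporting affine minorant over $\overline{\Omega}$ touching at $z$.

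For a vertex $x\in\Omega$ this is precisely the hypothesis: $\partial_{\overline{\Omega}}u(x)=\partial_\Omega u(x)=\partial^{\mathrm{prox}}u(x)\neq\emptyset$. So the whole content of the statement sits at the outer vertices $y\in\overline{\Omega}\setminus\Omega$. Fix such a $y$ and pick $x_0\in\Omega$ with $x_0\sim y$, which exists by the definition of $\overline{\Omega}$. I would then argue that the supporting affine minorant of $u$ at $x_0$ can be chosen tight also at $y$: the polytope $\partial^{\mathrm{prox}}u(x_0)$ is cut out by the half-spaces $\{p:\ p\cdot(y'-x_0)\le u(y')-u(x_0)\}$ over $y'\sim x_0$, and the geometric hypothesis \eqref{nbdcond} on $G$ guarantees that the local neighbour data of $x_0$ faithfully records the convex geometry near $x_0$, so that the half-space indexed by $y$ contributes a nonempty facet of $\partial^{\mathrm{prox}}u(x_0)$; pick $p_0$ on this facet, so $p_0\cdot(y-x_0)=u(y)-u(x_0)$. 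By the standing equality $\partial^{\mathrm{prox}}u(x_0)=\partial_\Omega u(x_0)=\partial_{\overline{\Omega}}u(x_0)$ we get $p_0\in\partial_{\overline{\Omega}}u(x_0)$, so the affine function $\ell(z):=u(x_0)+p_0\cdot(z-x_0)$ satisfies $\ell\le u$ on $\overline{\Omega}$ and $\ell(y)=u(y)$; hence $p_0\in\partial_{\overline{\Omega}}u(y)\neq\emptyset$. Running this over all outer vertices (and using the always-true inclusion $\partial_\Omega u\subset\partial^{\mathrm{prox}}u$ of Lemma \ref{lem:difincl} to see that the hypothesis is a genuine equality, so that nothing is circular) yields a supporting affine minorant at every vertex of $\overline{\Omega}$; their maximum is the desired $\tilde u$, and its affine pieces are organized by the essentially disjoint subdifferentials of Lemma \ref{subdiffdisjoint}.

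The step I expect to be the main obstacle is this middle one — propagating nonemptiness of the subdifferential from the vertices of $\Omega$ to the outer vertices — and it is precisely here that \eqref{nbdcond} is indispensable. If a neighbour of $x_0$ were an interior point of the convex hull of the remaining local data, its half-space would be redundant in the description of $\partial^{\mathrm{prox}}u(x_0)$, the corresponding facet could be empty, and $u$ could then have a concave kink at the outer vertex $y$ while still satisfying $\partial^{\mathrm{prox}}u=\partial_\Omega u\neq\emptyset$ at every point of $\Omega$. Making the facet-nonemptiness argument fully rigorous is the crux: a natural route is to peel outer vertices off $\overline{\Omega}$ one layer at a time, or to argue by a minimal counterexample in $\sharp\overline{\Omega}$, in either case reducing to the statement that, under \eqref{nbdcond}, a convex combination $y=\sum_i\lambda_i z_i$ of vertices of $\overline{\Omega}$ witnessing $u(y)>\sum_i\lambda_i u(z_i)$ would contradict the hypothesis at the vertex $x_0\in\Omega$ adjacent to $y$.
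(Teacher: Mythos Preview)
Your overall strategy --- build the lower convex envelope of $S=\{(z,u(z)):z\in\overline\Omega\}$ and show every point of $S$ lies on it --- is exactly the paper's. The divergence, and the gap, is in your main mechanism for handling an outer vertex $y\in\overline\Omega\setminus\Omega$.

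You want the half-space $\{p:\ p\cdot(y-x_0)\le u(y)-u(x_0)\}$ to contribute an actual facet of $\partial^{\mathrm{prox}}u(x_0)$, and you hope \eqref{nbdcond} forces this. It does not. Redundancy of that half-space is governed by whether $y-x_0$ lies in the \emph{nonnegative cone} generated by the other $y'-x_0$ together with an inequality on the values $u(y')$; condition \eqref{nbdcond} only controls which points of $V$ sit in the \emph{convex hull} of the neighbours, and says nothing about the values of $u$. Concretely: take $x_0=0$ with neighbours $\pm e_1,\pm e_2,e_1+e_2$ (and $V$ sparse enough that \eqref{nbdcond} holds), and set $u(0)=0$, $u(\pm e_1)=u(\pm e_2)=1$, $u(e_1+e_2)=3$. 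The constraint for $e_1+e_2$ is $p_1+p_2\le 3$, redundant given $p_1\le 1$ and $p_2\le 1$; the facet is empty. Here the proposition still holds because $e_1+e_2$ is an extreme point of $\overline\Omega$, but the point is that your facet argument cannot be the engine --- supporting affine functions at outer vertices need not arise as facets of a neighbour's proximal subdifferential.

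The paper does not attempt facet-nonemptiness. It argues by contradiction: if some $y_0$ lies strictly above $\tilde u^\Omega$, locate the $d$-face $K$ of $\partial\,\mathrm{epi}(\tilde u^\Omega)$ whose projection contains $y_0$ in its interior; its vertices are points $(x_i,u(x_i))$ with $u(x_i)=\tilde u^\Omega(x_i)$. Condition \eqref{nbdcond} is used here, not to control facets of a subdifferential, but to guarantee that the vertices of such a face are picked out correctly relative to the neighbour structure (so that one can compare $K$ with the half-spaces coming from $\partial^{\mathrm{prox}}u$). One then obtains $K\subset\partial\,\mathrm{epi}(\tilde u^\Omega)$ on one hand and, via the hypothesis $\partial^{\mathrm{prox}}u=\partial_\Omega u$, an inclusion forcing $K\nsubseteq$ the same set on the other --- a contradiction. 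Your final paragraph gestures at exactly this route (``a convex combination \dots would contradict the hypothesis at the vertex $x_0\in\Omega$ adjacent to $y$''); that is the line to pursue, and you should drop the facet mechanism entirely rather than treat it as the main argument with the contradiction as a fallback.
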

\begin{proof}
We first note that $u$ is the restriction of a convex $\tilde u$ if and only if $\tilde u^\Omega|_{\overline\Omega}=u$, where the convex envelope $\tilde u^\Omega:\mathbb R^d\to \mathbb R$ is defined via the condition 
\[
 \mathrm{epi}(\tilde u^\Omega)=\bigcap\{H_\pi\subset\mathbb R^{d+1}:\ \pi\subset\mathbb R^{d+1}\text{ is a hyperplane such that } \pi\cap \mathrm{epi}(u)=\emptyset\},
\]
where for $f:A\to\mathbb R$ and $\pi=\{(x,t):\ t=\langle p,x\rangle\}\subset\mathbb R^{d+1}$ a non-vertical hyperplane, we define
\[
\mathrm{epi}f :=\{(x,t):\ x\in\mathrm{A},\ t\in\mathbb R, \ t\ge \tilde u(x)\},\quad H_\pi:=\{(x,t):\ t\ge \langle p,x\rangle\}.
\]
We call a plane $\pi$ as above such that $\partial H_\pi\cap \partial\mathrm{epi}(\tilde u^\Omega)$ has dimension $d$ a \emph{nondegenerate support plane of $\mathrm{epi}(\tilde u^{\Omega})$}.

\medskip
Now assume that $G$, $u$ are as in the proposition but that $u\neq \tilde u^\Omega$, which as seen above is equivalent to saying that $u$ is not the restriction of a convex function. Then there exist $x_0\sim y_0\in\Omega$ such that $u(x_0)=\tilde u^\Omega(x_0)$ and $u(y_0)\neq \tilde u^\Omega(y_0)$.

\medskip

We note that $\partial\, \mathrm{epi}(\tilde u^{\Omega})$ is a polytope with vertices of the form $(x,u(x))$ with $x\in \Omega$ and $u(x)=\tilde u^\Omega(x)$, and faces contained in its nondegenerate support planes. Up to reducing to an equivalent problem in lower dimension, and using condition \eqref{nbdcond}, we may assume that there are $x_1,\dots,x_d\in \Omega$ such that $u(x_i)=\tilde u^\Omega(x_i), i=1,\dots,d$ 
and $y_0\in\mathrm{int}(\mathrm{conv}\{x_0,\dots,x_d\})$. 
Then we have 
\begin{equation}\label{kinbdry1}
K:=\mathrm{conv}\{(x_0,u(x_0)), \dots,(x_d,u(x_d))\}\subset\partial\, \mathrm{epi}(\tilde u^\Omega).
\end{equation} 
On the other hand, note that for each $x\in\Omega$ there holds 
\begin{equation}\label{kinbdry2}\mathrm{epi}(\tilde u^\Omega)\subset \bigcap\{H_\pi:\ \pi=\{(x,t): t=u(x_0)+\langle p,(x-x_0)\rangle\}, p\in\partial_\Omega u(x)\},
\end{equation}
and thus due to the hypotheses that $\partial^{\mathrm{prox}}u(x)=\partial_\Omega u(x)$ for all $x\in\Omega$, we find
\begin{equation}\label{kinbdry3}
K\nsubseteq\bigcap\{H_\pi:\ \pi=\{(x,t): t=u(x_0)+\langle p,(x-y_0)\rangle\},\ p\in\partial^{\mathrm{prox}}u(y_0)\}.
\end{equation}
Now \eqref{kinbdry1}, \eqref{kinbdry2} and \eqref{kinbdry3} give $K\nsubseteq K$ which is a contradiction, as desired.

\end{proof}
\begin{remark}
 In dimension $d\ge 2$ it is interesting to prove or disprove whether the condition $\forall x\in\Omega, \partial^{\mathrm{prox}}u(x)=\partial_\Omega u(x)\neq \emptyset$ can be supplemented by a simple condition on $G$ to obtain a condition necessary for $u$ to be the restriction to $V$ of a convex function. To see the difficulty, consider the case of $\Omega\subset \mathbb R^2$ composed by $7$ points, namely the vertices $a_1,\dots,a_6$ of a regular hexagon (indices in clockwise order along the perimeter) and its center $a_0$. Let $u_1(x)=0$ for $x\in\{a_0, a_1,a_3,a_5\}$ and $u_1=1$ otherwise, and let $u_2(x)=0$ for $x\in\{a_0,a_2,a_4,a_6\}$ and $u_2=1$ otherwise. Then for neither choice of $a_0$'s $G$-neighbors $a_1,a_3,a_5$ or $a_2,a_4,a_6$ does it happen that $\partial^{\mathrm{prox}}u_i(a_0)=\partial_\Omega u_i(a_0), i=1,2$, but $u_1,u_2$ are restrictions of convex functions to $\Omega$. We do not pursue the question of what conditions on $G$ allow to replace the conditions from Proposition \ref{proxsubdiff} and obtain a necessary and sufficient condition for convexity of $u$.
\end{remark}

\subsection{Na\"ive discrete Neumann boundary value condition}\label{npnaive} This subsection is includes two well-known elementary results, which we include as a motivation for the next subsection, and for future reference. 

\medskip

Let $G,A$ be as in the introduction and consider functions $f:\Omega\to\mathbb R$, $g:\pomega\to\mathbb R$ and $u:\oo\to\mathbb R$. We are going to define what we mean when we say that $u$ is a ``na\"ive'' discrete solution of the Neumann boundary value problem
\begin{equation}\label{neum1}
\displaystyle  \left\{\begin{array}{rl}
\Delta_A u = f  &  \text{in } \Omega,\\
\frac{\partial u}{\partial \nu_\Omega} = \frac{g}{A} &  \text{on } \overrightarrow{\partial \Omega}.
\end{array}\right. 
\end{equation}
Our na\"ive choice is that the first equation in \eqref{neum1} means that $\Delta_A u(x)=f(x)$ for all $x\in\Omega$ and the second equation will mean that for all $(x,y)\in\pomega$, there holds $u(y)-u(x)=\frac{g(x,y)}{A(x,y)}$. The divergence theorem applied to the combinatorial vector field $\nabla_A u(x,y)=A(x,y)(u(y)-u(x))$, implies that a necessary condition for a solution of \eqref{neum1} to exist is that 
\begin{equation}\label{necneum1}
 \sum_{x\in\Omega}f(x)=\sum_{(x,y)\in\pomega}g(x,y).
\end{equation}
Unfortunately, condition \eqref{necneum1} is not sufficient, unlike what happens in the continuous case. This follows by comparing degrees of freedom of the two sides of \eqref{neum1}: $u$ is determined by the choice of its $\sharp\oo$ independent values over its domain, whereas $f$ and $g$ are similarly determined by $\sharp\Omega +\sharp\partial\Omega$ independent values. We can prove the following simple and very general result
\begin{lemma}\label{naiveneumbd}
If $G=(V,E)$ is a graph with vertices $V$ and unoriented edges $E$. For any finite $\Omega\subset V$, defining the oriented boundary $\overrightarrow{\partial \Omega}:=\{(x,y):\ \{x,y\}\in E,\ x\in\Omega, y\notin\Omega\}$ and $\overline \Omega:=\Omega\cup\{x\in V:\ \exists y\in \Omega \mbox{ such that }\{x,y\}\in E\}$, there holds
\begin{equation}\label{problem}
\sharp(\oo\setminus\Omega)\le \sharp\pomega,
\end{equation}
with equality if and only if for each $y\in \oo\setminus\Omega$ there exists exactly one $x\in\Omega$ such that $\{x,y\}\in E_\Omega$.
\end{lemma}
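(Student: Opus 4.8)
The plan is to establish the counting inequality $\sharp(\overline\Omega\setminus\Omega) \le \sharp\overrightarrow{\partial\Omega}$ by a simple double-counting argument, and then to read off the equality case. First I would observe that every vertex $y\in\overline\Omega\setminus\Omega$ belongs to $\overline\Omega$ precisely because, by the definition of $\overline\Omega$, there exists at least one $x\in\Omega$ with $\{x,y\}\in E$; for each such pair the ordered pair $(x,y)$ lies in $\overrightarrow{\partial\Omega}$ (indeed $x\in\Omega$, $y\notin\Omega$, and $\{x,y\}\in E$). This gives a map $\Phi$ from $\overline\Omega\setminus\Omega$ to the set of subsets of $\overrightarrow{\partial\Omega}$, sending $y$ to the nonempty set $\{(x,y): x\in\Omega,\ \{x,y\}\in E\}$.

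Next I would note that the sets $\Phi(y)$ for distinct $y$ are pairwise disjoint, since an element $(x,y)\in\overrightarrow{\partial\Omega}$ has its second coordinate uniquely determined by $y$. Hence $\overrightarrow{\partial\Omega} = \bigsqcup_{y\in\overline\Omega\setminus\Omega}\Phi(y)$ as a disjoint union (every element $(x,y)$ of $\overrightarrow{\partial\Omega}$ has $y\in\overline\Omega\setminus\Omega$ and lies in $\Phi(y)$), and therefore
\[
\sharp\overrightarrow{\partial\Omega} \;=\; \sum_{y\in\overline\Omega\setminus\Omega}\sharp\Phi(y) \;\ge\; \sum_{y\in\overline\Omega\setminus\Omega} 1 \;=\; \sharp(\overline\Omega\setminus\Omega),
\]
using $\sharp\Phi(y)\ge 1$ for every such $y$. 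This is exactly \eqref{problem}.

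For the equality characterization, equality holds in the inequality above if and only if $\sharp\Phi(y)=1$ for every $y\in\overline\Omega\setminus\Omega$, which is precisely the statement that each $y\in\overline\Omega\setminus\Omega$ has exactly one neighbor $x\in\Omega$ (with $\{x,y\}$ an edge of the relevant graph, written $E_\Omega$ in the statement). I would spell out that $E_\Omega$ here means the set of edges of $G$ with exactly one endpoint in $\Omega$, so that $\{x,y\}\in E_\Omega$ is equivalent to $(x,y)\in\overrightarrow{\partial\Omega}$ for $y\notin\Omega$. I do not expect any real obstacle: the only point requiring mild care is the bookkeeping that $\overrightarrow{\partial\Omega}$ is partitioned by the second coordinate and that every $y\in\overline\Omega\setminus\Omega$ contributes a nonempty block, which is immediate from the definitions of $\overline\Omega$ and $\overrightarrow{\partial\Omega}$.
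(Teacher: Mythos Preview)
Your argument is correct and is essentially the same as the paper's: the paper shows that the projection $\overrightarrow{\partial\Omega}\ni(x,y)\mapsto y\in\overline\Omega\setminus\Omega$ is a surjection (with equality iff it is a bijection), which is exactly your observation that the fibers $\Phi(y)$ are nonempty and partition $\overrightarrow{\partial\Omega}$. The only cosmetic difference is that you count fibers explicitly while the paper phrases it as surjectivity/bijectivity of the projection.
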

\begin{proof}
To prove \eqref{problem}, we show that the map $\pomega\ni (x,y)\mapsto y\in\oo\setminus\Omega$ is a surjection. Indeed, by definition any $y\in\oo\setminus\Omega$ is an extreme of an edge $(x,y)$ with the $x\in\Omega$, thus $y$ is in the image of the above map. The map is bijective precisely if such $x$ is unique, completing the proof.
\end{proof}
We consider first the case in which equality holds in \eqref{problem}. In this case we have the following result:

\begin{lemma}\label{naivegood}
 Assume that $\sharp\pomega=\sharp(\oo\setminus\Omega)$. Then for any data $f:\Omega\to\mathbb R$ and $g:\pomega\to\mathbb R$ such that condition \eqref{necneum1} holds on every connected component of $\Omega$, there exists exactly one solution to \eqref{neum1}, up to addition of a function which is constant on each connected component of $\oo$.
\end{lemma}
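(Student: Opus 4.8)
The plan is to reduce the claim to a linear-algebra statement about a square system and then invoke the hypothesis \eqref{necneum1}. First I would fix a connected component of $\oo$ (it suffices to treat each component separately, and the final statement allows adding a locally constant function, so working component by component is harmless), and count degrees of freedom: under the equality hypothesis $\sharp\pomega=\sharp(\oo\setminus\Omega)$, the unknown $u$ has $\sharp\oo=\sharp\Omega+\sharp(\oo\setminus\Omega)$ scalar values, while the data $(f,g)$ consist of $\sharp\Omega+\sharp\pomega=\sharp\Omega+\sharp(\oo\setminus\Omega)$ scalars, so the map $u\mapsto(\Delta_A u,\,\tfrac{\partial u}{\partial\nu_\Omega})$ is a linear map between spaces of equal dimension. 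Thus it is enough to prove injectivity up to the locally constant kernel, i.e.\ that if $\Delta_A u\equiv 0$ on $\Omega$ and $u(y)-u(x)=0$ for all $(x,y)\in\pomega$, then $u$ is constant on the connected component; surjectivity onto the subspace cut out by \eqref{necneum1} then follows by dimension count together with the fact (Lemma in \eqref{necneum1}) that the image is contained in that subspace.

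Next I would carry out the injectivity argument by a discrete maximum principle. By the boundary condition in the homogeneous problem, $u$ takes the \emph{same} value on $y\in\oo\setminus\Omega$ as on its unique $\Omega$-neighbor $x$ (here the equality case of Lemma \ref{naiveneumbd} is crucial: each exterior vertex has exactly one interior neighbor, so $u(y)$ is literally pinned to $u(x)$). Hence the problem collapses to the interior: $\Delta_A u(x)=\sum_{y\sim x}A^2(x,y)(u(x)-u(y))=0$ for every $x\in\Omega$, where for $y\in\oo\setminus\Omega$ we may replace $u(y)$ by $u(x)$, so those terms vanish and we get a genuine averaging identity $u(x)=\sum_{y\in\Omega, y\sim x}w_{xy}u(y)\big/\sum w_{xy}$ with positive weights $w_{xy}=A^2(x,y)$ over the interior neighbors. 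Take $x_0\in\Omega$ where $u$ attains its maximum over the (finite) component; the averaging identity forces $u(y)=u(x_0)$ for every interior neighbor $y$ of $x_0$, and propagating this along paths within the connected component of $\Omega$ (and then out to $\oo\setminus\Omega$ via the pinning) shows $u$ is constant there. This gives that the kernel is exactly the locally constant functions, hence the linear map has the expected rank, which yields both existence and the stated uniqueness.

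The one point that needs a little care — and is the main obstacle — is the connectivity bookkeeping: I should check that the maximum-principle propagation stays inside a single connected component of $\Omega$ and that the dimension count is done per component rather than globally, since the hypothesis \eqref{necneum1} is assumed on each component and the kernel is locally (not globally) constant. One must verify that for a fixed component $\oo_i$ of $\oo$, the data restricted to it genuinely lives in the correct codimension-one subspace and the operator restricted to it has a one-dimensional kernel, so that the ranks match up on each piece. Assuming that, direct-summing over components finishes the proof. The rest is routine: the surjectivity follows because a linear map between finite-dimensional spaces whose kernel has the codimension equal to that of the target subspace, and whose image is contained in that subspace, must map onto it.
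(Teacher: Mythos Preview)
Your proposal is correct and reaches the same conclusion, but the mechanism differs from the paper's. The paper packages the two equations of \eqref{neum1} into a single operator $L:\mathbb R^{\oo}\to\mathbb R^{\oo}$ and verifies by direct computation that $L$ is \emph{self-adjoint} with respect to the standard inner product on $\mathbb R^{\oo}$; once the kernel is identified (via the same maximum-principle reasoning you use), self-adjointness immediately gives $\mathrm{Im}(L)=\mathrm{Ker}(L)^\perp$, which is exactly the subspace described by \eqref{necneum1} on each component. You instead use rank--nullity: knowing $\dim\mathrm{Ker}=\sharp\{\text{components}\}$ and that the image lies in the codimension-$\sharp\{\text{components}\}$ subspace cut out by the compatibility conditions, equality of dimensions forces the image to be that whole subspace.

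Your route is arguably more elementary, since it avoids the somewhat lengthy self-adjointness verification; the paper's route is more structural and makes the characterization of the image immediate without a separate inclusion argument. Both arguments hinge on the same maximum principle for the kernel and on the observation (equality case of Lemma~\ref{naiveneumbd}) that each $y\in\oo\setminus\Omega$ has a unique $\Omega$-neighbor, which is what makes the system square and lets you pin $u(y)$ to $u(x)$ in the homogeneous problem. Your flagged ``connectivity bookkeeping'' is genuine but mild: under the equality hypothesis a boundary vertex cannot bridge two components of $\Omega$, so components of $\Omega$ and of $\oo$ are in bijection, and the per-component dimension count goes through cleanly.
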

\begin{proof}
As $\sharp \overrightarrow{\partial\Omega}=\sharp (\oo\setminus\Omega)$, for each $x\in\oo\setminus\Omega$ there exists a unique $y_x\in\Omega$ such that $(y_x,x)\in\overrightarrow{\partial\Omega}$. With this notation we define
\[
L:\{u:\oo\to\mathbb R\}\to\{u:\oo\to\mathbb R\},\quad (Lu)(x):=\left\{
\begin{array}{ll}
-\Delta_A u(x)&\mbox{ if }x\in\Omega,\\[3mm]
A^2(x,y_x)(u(y_x)-u(x))&\mbox{ if }x\in\oo\setminus\Omega.\end{array}\right.
\]
We first show that $L$ is self adjoint with respect to the scalar product given by 
\[
 \langle u,v\rangle:=\sum_{x\in\Omega}u(x)v(x)+\sum_{x\in\oo\setminus\Omega}u(x)v(x).
\]
Indeed we have, denoting $d_x:=\sum_{y\in \oo}A^2(x,y)$ the total squared weight of neighbors of a vertex $x$ in $\oo$,
\begin{eqnarray*}
 \lefteqn{\langle Lu,v\rangle= -\sum_{x\in\Omega}\sum_{y:\{x,y\}\in E}A^2(x,y)(u(y)-u(x))v(x) + \sum_{(x,y)\in\pomega}A^2(x,y)(u(y)-u(x))v(y)}\\
 &=&\sum_{x\in\Omega}d_x\ u(x)v(x)-\sum_{x\in\Omega}\sum_{y\in\oo:\{x,y\}\in E}A^2(x,y)u(y)v(x) +\sum_{x\in\oo\setminus\Omega}A^2(y_x,x)u(x)v(x) -\sum_{x\in\oo\setminus\Omega}\sum_{y\in\oo:\{x,y\}\in E}A^2(x,y)u(x)v(y)\\
 &=&\sum_{x\in\oo}d_x\ u(x)v(x) - \sum_{x\in\oo}\sum_{y\in\oo:\{x,y\}\in E}A^2(x,y)u(y)v(x).
\end{eqnarray*}
The last sum is completely symmetric in $u,v$, because $(x,y)\in \vec E$ if and only if $(y,x)\in \vec E$ as well. This proves that $\langle Lu,v\rangle =\langle v,Lu\rangle$, as desired.

\medskip

Next, we show that a function is in the kernel of $L$ if and only if it is constant on every connected component of $\oo$. Otherwise, suppose that $Lu=0$ and that $u$ has a strict local maximum at $y\in\overline \Omega$. If $y\in\Omega$ then the condition $\Delta_A u(y)=0$ implies that the value $u(y)$ is the $A^2$-weighted average of the values of $u$ at the neighbors of $y$ in $\oo$, which contradicts the fact that $y$ is a strict local maximum. If $x\in\oo\setminus\Omega$ then $u(y_x)=u(x)$ and by our hypothesis $y_x$ is the only neighbor of $x$, again contradicting the local maximum property. This shows that $Lu=0$ implies that $u$ is constant on each connected component of $\oo$.

\medskip

Let $\bar U_j$ with $1\le j\le k$ be an enumeration of the connected components of $\oo$. Then there exists a partition of $\Omega$ given by $U_j, 1\le j\le k$ such that $\bar U_j$ is really the closure of $U_j$, as the notation indicates. As the sets $\bar U_j$ form a partition of $\oo$, considering the equation $Lu(x)=b(x)$ for all $x \in \oo$ translates into requiring $Lu(x)=b(x)$ for $x \in \bar U_j$ and $1\le j\le k$. For fixed $j$, we have $\sharp\overrightarrow{\partial U_j}=\sharp((\oo\setminus\Omega)\cap U_j)$. Thus we can replicate the procedure from the previous paragraph to show that for $1\le j\le k$ the restriction $u|_{\bar U_j}\mapsto Lu|_{\bar U_j}$ is selfadjoint. Therefore the system given by $Lu(x)=b(x)$ for all $x \in \bar U_j$ has a solution if and only if $b \in \langle \vec{1}_{\sharp\overrightarrow{\partial U_j}} \rangle^\perp$. Without loss of generality, we can consider an ordered version of $b$ in which the first coordinates are the values of $f$ at $U_j$ and the final coordinates are the values of $g$ on the edges of $\overrightarrow{\partial U_j}$. Then the fact that $Lu=b$ has a solution is equivalent to
\[
\sum_{x \in U_j} f(x) = \sum_{(x,y) \in \overrightarrow{\partial U_j}}g(x,y)
\]
As $L$ is selfadjoint, the image of $L$ is orthogonal to its kernel, which shows that \eqref{necneum1} is verified on each connected component of $\oo$.
\end{proof}

\begin{remark}[Need of a different boundary value definition] In many cases of interest, such as the triangular graph, or the case of non-convex domains $\Omega$, one can check that not only strict inequality in \eqref{problem} holds, but also, the discrepancy between the two sides is considerable, so that the ``extra degree of freedom'' allowed via equation \eqref{necneum1} is not sufficient to ensure existence of solutions. This means that we need to consider an alternative to our na\"ive discrete version of Neumann boundary value conditions.
\end{remark}

\subsection{A discrete Neumann boundary value condition adapted to subdifferential compraisons (following \cite{hamamuki})}\label{nphamamuki} The discussion from the previous subsection indicates that we should instead consider a different setup for \eqref{neum1}, in which the discrete Neumann boundary value condition is fixed via a function $g:\oo\setminus\Omega\to \mathbb R$. This allows to directly have an equal number of degrees of freedom for $u$ as for the pair $(f,g)$. Then we interpret the condition $\dfrac{\partial u}{\partial \nu_\Omega} = \frac{g}{A}$ from \eqref{neum1} to mean that 
\begin{equation}\label{newbdcond}
(\forall y\in\oo\setminus\Omega)(\exists(x,y)\in\pomega)\quad\mbox{such that}\quad u(y)-u(x)=\frac{g(x,y)}{A(x,y)}.
\end{equation}
This condition is useful because it the minimum requirement that still permits completing the reasoning by contraddiction in step (a) of \eqref{proofscheme}, and prove that $H_g\subset \partial_\Omega u(\Omega)$ for solutions of \eqref{neum1}.

\begin{lemma}\label{lemexistsolineq}
Assume $\Omega\subset V$ is a subset of the vertices of a graph $\overline G=(V,\overline E)$, and let $\oo$ be the extremes of all edges in $\overline E$ which have at least one extreme in $\Omega$. Let $\pomega$ be the oriented edges $(x,y)$ such that $x\in\Omega,y\in\oo\setminus\Omega$. Let $G$ be the new graph which has vertex set $\Omega_G:=\Omega\cup\{x_y:\ (x,y)\in\overrightarrow{\partial\Omega}\}$, and edge set
\[
E_{\Omega_G}:=\bigg\{\{x,y\}:\ x,y\in\Omega\bigg\}\cup\bigg\{\{x,x_y\}:\ (x,y)\in\overrightarrow{\partial \Omega}\bigg\}.
\]
We denote $\partial_G$ the boundary taken in the new graph $G$, and given a function $g:\oo\setminus\Omega\to\mathbb R$, we associate to it $\bar g:\overrightarrow{\partial_G\Omega}\to\mathbb R$ defined by
\[
(\forall y\in\oo\setminus\Omega), (\forall x\in \Omega: (x,y)\in \vec E), \quad \bar g(x,x_y):=g(y).
\]
If $f:\Omega\to\mathbb R$ and $g,\bar g$ as above satisfy on each connected component $\Omega_j$ of $\Omega\simeq \Omega_G$
\begin{equation}\label{necneum11}
 \sum_{x\in\Omega_j}f(x)=\sum_{(x,(x,y))\in\overrightarrow{\partial_G\Omega_j}}\bar g(x,(x,y))=\sum_{y\in\oo\setminus\Omega}\sharp\{x\in\Omega_j:\ \{x,y\}\in \overline E\} g(y),
\end{equation}
then there exists a solution $u_G$ to
\begin{equation}\label{neum11}
\displaystyle  \left\{\begin{array}{rl}
\Delta_A u_G(x)= f(x)  &  x\in \Omega=\Omega_G,\\
u_G(x_y)-u_G(x)= \frac{\bar g(x,x_y)}{\bar A(x,x_y)} &  (x,x_y)\in\overrightarrow{\partial \Omega_G},
\end{array}\right. 
\end{equation}
in which $\Omega_G$ are the vertices in $G$ which are identified with vertices in $\Omega$.

\smallskip

\noindent Moreover, for $f,g$ satisfy \eqref{necneum11}, in the original graph $\overline G$ there exists a solution $u$, which is equal to $u_G$ over $\Omega$, to the system
\begin{equation}\label{neum12}
\displaystyle  \left\{\begin{array}{rl}
\Delta u \le f  &  \text{in } \Omega,\\
\frac{\partial u}{\partial \nu_\Omega} = g &  \text{on } \overrightarrow{\partial \Omega},
\end{array}\right. 
\end{equation}
where the boundary condition is defined by \eqref{newbdcond}.
\end{lemma}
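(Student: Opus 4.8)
\textbf{Proof plan for Lemma \ref{lemexistsolineq}.}

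The plan is to first solve the modified problem \eqref{neum11} on the auxiliary graph $G$, where by construction $\sharp\overrightarrow{\partial_G\Omega}=\sharp(\Omega_G\setminus\Omega)$ since each boundary vertex $x_y$ of $G$ has, by definition of $E_{\Omega_G}$, exactly one neighbor in $\Omega_G$ (namely $x$). This is precisely the ``equality case'' hypothesis of Lemma \ref{naivegood}. I would therefore apply Lemma \ref{naivegood} verbatim to $G$ with data $f$ on $\Omega_G$ and $\bar g$ on $\overrightarrow{\partial_G\Omega}$: the compatibility condition \eqref{necneum1} for this auxiliary problem is exactly \eqref{necneum11}, which we are given holds on each connected component. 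Lemma \ref{naivegood} then yields a solution $u_G$ to \eqref{neum11}, unique up to adding a locally constant function; this disposes of the first half of the statement with essentially no new work. One small point to check is that the connected components of $\Omega_G$ correspond bijectively to those of $\Omega$ — this is immediate because the extra vertices $x_y$ are leaves attached to vertices of $\Omega$, so they do not merge or split components.

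The second half is to transfer $u_G$ back to the original graph $\overline G$. I would define $u:\oo\to\mathbb R$ by setting $u(x):=u_G(x)$ for $x\in\Omega$, and for each $y\in\oo\setminus\Omega$ choosing one neighbor $x_0=x_0(y)\in\Omega$ with $\{x_0,y\}\in\overline E$ and setting $u(y):=u_G(x_0)+\frac{g(y)}{A(x_0,y)}$; equivalently $u(y):=u_G((x_0)_y)$, using the identity $u_G(x_{y})-u_G(x)=\bar g(x,x_y)/\bar A(x,x_y)=g(y)/A(x,y)$ which holds for \emph{every} $x$ adjacent to $y$ by \eqref{neum11}. Then the boundary condition \eqref{newbdcond} is satisfied at $y$ by the witnessing edge $(x_0,y)$. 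For the interior inequality: for $x\in\Omega$, the discrete Laplacian $\Delta u(x)=\sum_{z\sim_{\overline G} x}A^2(z,x)(u(x)-u(z))$ splits into the sum over neighbors $z\in\Omega$, which reproduces exactly the corresponding terms of $\Delta_A u_G(x)$, plus the sum over neighbors $y\in\oo\setminus\Omega$. For the latter terms we have $u(x)-u(y)\le u(x)-u_G(x)\cdot$? — more carefully, $u(y)=u_G(x_0(y))+g(y)/A(x_0(y),y)$ while the term $x$ contributes in $\Delta_A u_G(x)$ through the single edge $\{x,x_y\}$ giving $A^2(x,y)(u_G(x)-u_G(x_y))=A^2(x,y)(u_G(x)-u_G(x_0(y))-g(y)/A(x,y))$. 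Since $A^2(x,y)(u(x)-u(y))=A^2(x,y)(u_G(x)-u_G(x_0(y))-g(y)/A(x,y))$ and this equals the $G$-contribution, we get $\Delta u(x)=\Delta_A u_G(x)=f(x)\le f(x)$; hence in fact equality holds here, and the inequality $\Delta u\le f$ in \eqref{neum12} is satisfied a fortiori. (If instead a boundary vertex $y$ of $\overline G$ is adjacent to several vertices of $\Omega$ but $u(y)$ was defined through $x_0(y)\ne x$, one must re-examine the sign; I expect the correct choice is $u(y):=\max_{x\sim y,\,x\in\Omega}\{u_G(x)+g(y)/A(x,y)\}$ or the analogous min, chosen so that $u(x)-u(y)\le$ the $G$-term, which is where the inequality in \eqref{neum12} genuinely enters rather than equality.)

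The main obstacle I anticipate is precisely this last point: the passage from the na\"ive-per-edge boundary values on $G$ to the single-witnessing-edge boundary values on $\overline G$ of \eqref{newbdcond}, and ensuring that whatever choice of $u(y)$ one makes is simultaneously (i) compatible with \eqref{newbdcond} at every $y$ and (ii) makes $\Delta u(x)\le f(x)$ hold at every interior $x$ that has multiple boundary neighbors with possibly conflicting demands. The resolution is that on $G$ the identity $u_G(x_y)-u_G(x)=g(y)/A(x,y)$ holds for \emph{all} edges $(x,x_y)$, so all the ``copies'' $(x_0)_y$ of $y$ carry consistent information: defining $u(y)$ via any one of them, or via the appropriate extremum, keeps the interior Laplacian bounded above by $f(x)$ because dropping from equality on $G$ to an inequality on $\overline G$ only ever decreases $u(y)$ relative to what the edge $\{x,x_y\}$ ``expected,'' hence increases $u(x)-u(y)$ in a controlled way — and one must simply verify the extremum is taken consistently. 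Once this bookkeeping is done, \eqref{neum12} follows and $u=u_G$ on $\Omega$ by construction, completing the proof.
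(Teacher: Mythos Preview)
Your overall approach matches the paper's exactly: apply Lemma \ref{naivegood} on the auxiliary graph $G$ (where the equality case of Lemma \ref{naiveneumbd} holds by construction) to obtain $u_G$, then transfer back to $\overline G$. The first half is correct as you describe it.

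The second half has a genuine gap in the sign analysis. Your parenthetical identifies the right definition --- the paper sets $u(y):=\max_{x:(x,y)\in\overrightarrow{\partial\Omega}} u_G(x_y)$ --- but you remain uncertain between max and min, and your final paragraph reasons backwards. The claim that ``defining $u(y)$ via any one of them'' works is false: an arbitrary choice of $x_0(y)$ does not guarantee $\Delta u(x)\le f(x)$ at an interior $x\ne x_0(y)$ adjacent to $y$. And the phrase ``only ever decreases $u(y)$\ldots hence increases $u(x)-u(y)$'' would, if it held, give $\Delta u(x)\ge f(x)$, the wrong direction. Your middle paragraph's computation that $\Delta u(x)=\Delta_A u_G(x)=f(x)$ with equality is likewise only valid when $y$ has a unique neighbor in $\Omega$.

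The correct bookkeeping is short once the sign is straightened out: with $u(y)=\max_x u_G(x_y)$, every $x$ adjacent to $y$ satisfies $u(y)\ge u_G(x_y)$, hence $u(x)-u(y)\le u_G(x)-u_G(x_y)$; summing over all neighbors gives $\Delta u(x)\le\Delta_A u_G(x)=f(x)$. The boundary condition \eqref{newbdcond} is witnessed at $y$ by the maximizing $x$, since there $u(y)-u(x)=u_G(x_y)-u_G(x)=g(y)/A(x,y)$. This is, as you correctly suspected, the only place where the inequality in \eqref{neum12} (rather than equality) genuinely enters.
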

\begin{proof}
From the definition of $\Omega_G$ we have that $\sharp\overrightarrow{\partial_G\Omega}=\sharp(\oo\setminus\Omega)$. The hypothesis that $f:\Omega\to\mathbb R$ and $\bar g:\overrightarrow{\partial_G\Omega}\to\mathbb R$ satisfy 
\eqref{necneum11} means that $f$ and $\overline{g}$ satisfy the condition $\eqref{necneum1}$ on every connected component of $\Omega\simeq\Omega_G$. Then, applying Lemma \ref{naivegood}, there exists exactly one solution to $\eqref{neum11}$, up to addition of a function which is constant on each connected component of $\overline{\Omega}$.

\medskip

If we now define $u: \overline{\Omega} \to \mathbb{R}$ by $u(x)=u_G(x)$ for $x \in \Omega$ and $u(y)=\displaystyle \max_{y:(x,y) \in \overline{E}} u_G(y)$ for $y \in (\overline{\Omega} \backslash \Omega)$ and we take into count the relation between $\overline{g}$ and $g$, we conclude that $u$ satisfies $\eqref{neum12}$.
\end{proof}

\section{An optimization result for polyhedra in $\R^d$ related to Minkowski's theorem}\label{sec4}
We introduce a finite-dimensional optimization problem which it is useful to study for the final discussion. To formulate the problem in its general form, we use the following notation for a halfspace with normal vector $v\in\mathbb R^n$ and boundary the hyperplane $\{p:\ p\cdot v =c\}$:
\[
H_v(c):= \{p: \langle p,v\rangle \le c\}.
\]
where $c\in \mathbb{R}$ is a constant.

\medskip

If we fix $\mathcal V\subset\mathbb R^d$ appropriately, there holds
\begin{equation}\label{arithmgeomgen}
\left|\bigcap_{v\in \mathcal V}H_v(c_v)\right|\le C_{\mathcal V}\left(\sum_{v\in\mathcal V} c_v\right)^d,
\end{equation}
which for $\mathcal V=\{\pm e_1,\ldots,\pm e_d\}\subset\mathbb R^d$ is equivalent to the arithmetic-geometric inequality, for which we have optimal values $c_v=\frac{1}{2d}$ for all $v\in\mathcal V$ and $C_{\mathcal V}=\frac{1}{d^d}$. Thus \eqref{arithmgeomgen} generalizes this inequality.

\medskip

The optimal constant in \eqref{arithmgeomgen} can be expressed as follows:
\begin{equation}\label{subdopt1}
 C_{\mathcal V}=\max\left\{\left|\bigcap_{v\in\mathcal V} H_v(c_v)\right|: \vec c=(c_v)_{v\in\mathcal V}\in\R^{\mathcal V},\ \sum_{v\in\mathcal V}c_v=1\right\}.
\end{equation}
In the above, $|\cdot|$ represents the Lebesgue measure in $\R^d$ and the maximum exists only if the convex hull $\mathrm{conv}(\mathcal V)$ has the origin as an interior point. We will only study this problem for {\bf finite $\mathcal V$}, although it can be studied also in higher generality. We will denote from now on
\begin{equation}\label{notmcv}
 \mathcal V=\{v_1,\ldots,v_N\}.
\end{equation}
The below results turned out to be equivalent to the proof of Minkowski's theorem which describes convex polyhedra with given facet normals and areas, see \cite[Ch. 7]{alexandrov2005convex}. Minkowski introduced a problem equivalent to \eqref{subdopt1} as a tool for constructing polyhedra and showing that they cannot be constructed in some cases. In our case we follow the opposite direction: we start from \eqref{subdopt1} and are interested in the characterization of the solutions. We will state the theorems here, and we sketch the proofs in the appendix. The interested reader can find a large overlap with the above cited book of Alexandrov.
\begin{lemma}\label{existcond}
With the above notation the following hold:
\begin{enumerate}
\item The problem \eqref{subdopt1} has a finite maximum value if and only if 
 \begin{equation}\label{sumzero}
  \mathrm{Span}\mathcal V = \mathbb R^d\quad \text{and}\quad\sum_{v\in\mathcal V} v=0.
 \end{equation}
\item If \eqref{sumzero} holds and $\mathcal R_{\vec c}:=\bigcap_{v\in\mathcal V}H_v(c_v)$ is an optimizer, then $\mathcal R_{\vec c} + w$ is also an optimizer for each $w\in\R^d$.
\end{enumerate}
\end{lemma}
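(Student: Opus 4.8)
The plan is to analyze the optimization problem \eqref{subdopt1} by understanding when the feasible region $\mathcal R_{\vec c} = \bigcap_{v\in\mathcal V} H_v(c_v)$ is bounded and has positive volume, and then tracking how the volume behaves under the normalization constraint $\sum_v c_v = 1$. For part (1), the key observation is that $\mathcal R_{\vec c}$ is a nonempty polyhedron with nonempty interior for \emph{some} choice of $\vec c$ if and only if $0$ lies in the interior of $\mathrm{conv}(\mathcal V)$; and it is bounded if and only if the normals $\mathcal V$ positively span $\mathbb R^d$, equivalently $\mathrm{Span}\,\mathcal V = \mathbb R^d$ together with $0 \in \mathrm{int}\,\mathrm{conv}(\mathcal V)$. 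So the first half of \eqref{sumzero} is exactly the boundedness condition. The second half, $\sum_v v = 0$, is the subtler one: I would show that if $\sum_v v \neq 0$, then one can translate an optimizer to increase its volume while keeping $\sum_v c_v$ fixed — wait, translation does keep volume fixed, so the argument must instead be that one can \emph{rescale asymmetrically}. Concretely, translating $\mathcal R_{\vec c}$ by $w$ replaces $c_v$ by $c_v + \langle w, v\rangle$, changing $\sum_v c_v$ by $\langle w, \sum_v v\rangle$; if $\sum_v v \neq 0$, pick $w$ with $\langle w, \sum_v v\rangle < 0$ so the translated region has smaller sum, then dilate by a factor $> 1$ to restore the sum to $1$, strictly increasing the volume. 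Iterating, the supremum is $+\infty$. Conversely, when $\sum_v v = 0$, translation preserves \emph{both} the volume and $\sum_v c_v$, which is precisely the content of part (2), and one must additionally argue the supremum is finite — this follows because the normalization $\sum_v c_v = 1$ together with boundedness of $\mathcal R_{\vec c}$ confines the optimizer (modulo translation) to a compact family, on which volume is a continuous function.

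For part (2), assuming \eqref{sumzero}, the computation is direct: if $\mathcal R_{\vec c} = \bigcap_v H_v(c_v)$, then $\mathcal R_{\vec c} + w = \bigcap_v \{p : \langle p - w, v\rangle \le c_v\} = \bigcap_v H_v(c_v + \langle w, v\rangle) = \mathcal R_{\vec c'}$ with $c'_v := c_v + \langle w, v\rangle$. Then $\sum_v c'_v = \sum_v c_v + \langle w, \sum_v v\rangle = 1 + 0 = 1$, so $\vec c'$ is still feasible, and $|\mathcal R_{\vec c'}| = |\mathcal R_{\vec c} + w| = |\mathcal R_{\vec c}|$ by translation invariance of Lebesgue measure. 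Hence $\mathcal R_{\vec c'}$ is also an optimizer. This part is routine once \eqref{sumzero} is in hand.

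The main obstacle is the finiteness direction in part (1): showing that under \eqref{sumzero} the supremum in \eqref{subdopt1} is actually attained and finite, rather than just bounded. The natural route is a compactness argument: use the translation freedom from part (2) to normalize, say, the centroid (or the incenter, or some boundary vertex) of $\mathcal R_{\vec c}$ to the origin; then the constraint $\sum_v c_v = 1$ together with boundedness of the polyhedron (guaranteed by $\mathrm{Span}\,\mathcal V = \mathbb R^d$ and $0 \in \mathrm{int}\,\mathrm{conv}(\mathcal V)$) forces the $c_v$ to lie in a compact set, and the volume functional $\vec c \mapsto |\mathcal R_{\vec c}|$ is continuous (indeed concave after a suitable change of variables, which is the link to Minkowski's theorem cited from \cite[Ch. 7]{alexandrov2005convex}) on this set, so it attains its maximum. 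One must be slightly careful that degenerate $\vec c$ (where $\mathcal R_{\vec c}$ is lower-dimensional or empty) do not escape to infinity — but these have volume zero and so are irrelevant to the maximum. Since the paper defers full proofs to the appendix and cites Alexandrov, I would at this point state that the finiteness and existence follow from the cited correspondence with Minkowski's existence theorem, and record the translation computation of part (2) explicitly as it is short and self-contained.
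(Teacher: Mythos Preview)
Your proposal is correct and follows essentially the same route as the paper: prove the translation identity $\mathcal R_{\vec c}+w=\mathcal R_{\vec c'}$ with $c'_v=c_v+\langle w,v\rangle$ (which gives part (2) immediately under $\sum_v v=0$), use it to run the translate-then-dilate argument showing the supremum is $+\infty$ when $\sum_v v\neq 0$, and then for sufficiency use the translation freedom to normalize (the paper translates so that the origin lies in the interior, hence all $c_v\ge 0$) and invoke a compactness/continuity argument for existence. The paper in fact presents part (2) first and then feeds it into both directions of part (1), exactly as you outlined; your side remark that $\mathcal R_{\vec c}$ has nonempty interior only when $0\in\mathrm{int}\,\mathrm{conv}(\mathcal V)$ is a slip (large $c_v$ always give nonempty interior; that condition is instead the boundedness criterion), but this does not affect the argument.
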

We also provide a proof of the above lemma, for completeness.
\begin{proof}
 We start by proving the second statement of the theorem. It suffices to show that $\mathcal R_{\vec c}+w$ is also a solution to \eqref{subdopt1} if $\mathcal R_{\vec c}$ is. For this, note that 
 \begin{equation}\label{thvc}
  w+H_v(c_v) = H_v(c_v+\langle w, v\rangle).
 \end{equation}
Therefore $\mathcal R_{\vec c} + w=\mathcal R_{\vec c'}$, where $c'_v=c_v+\langle w,v\rangle$ for $v\in\mathcal V$. In particular 
\[
 \sum_{v\in\mathcal V} c'_v=1\ + \ \left\langle w, \sum_{v\in\mathcal V}v\right\rangle=0,
\]
which shows that $\mathcal R_{\vec c}+w$ is also a solution to \eqref{subdopt1} if $\mathcal R_{\vec c}$ is, as desired.

\medskip

To prove the necessity of \eqref{sumzero} for the existence of a maximizer in \eqref{subdopt1}, assume that \eqref{sumzero} does not hold, and let 
\[
 \bar v:=\sum_{v\in\mathcal V}v\neq 0.
\]
Then we use again \eqref{thvc} and for $w=-\lambda\bar v$ if $\vec c\in\R^{\mathcal V}$ satisfies $\sum_{v\in\mathcal V}c_v=1$ and $|\mathcal R_{\vec c}|>0$ then we find 
\[
 w+\mathcal R_{\vec c} = \mathcal R_{\vec c'}:=\mathcal R_{\vec c -\lambda|\bar v|^2\vec 1}, 
\]
in particular if for a fixed $\epsilon\in (0,1)$ we choose $\lambda=\frac{1-\epsilon}{|\bar v|^2\sharp \mathcal V}$ then $\sum_{v\in\mathcal V}c_v'=\epsilon$, and thus we have found $w=w_\epsilon$ as above, such that
\begin{equation}\label{dilatrprime}
 \frac1{\epsilon}\left(\mathcal R_{\vec c} + w_\epsilon\right)\quad\mbox{ is a competitor in \eqref{subdopt1}}.
\end{equation}
But due to the dilation by $1/\epsilon$, the competitor \eqref{dilatrprime} has volume $1/\epsilon^d$ times larger than the one of $\mathcal R_{\vec c}$. This construction can be done for $\epsilon$ arbitrarily close to $0$, which shows that \eqref{subdopt1} has no solution if \eqref{sumzero} fails.

\medskip

It remains to prove that if \eqref{sumzero} holds then a solution to \eqref{subdopt1} exists. First note that the vectors $v\in\mathcal V$ have $0$ in their convex hull, and thus taking all the $c_v\ge 0$ (for example $c_v=1/\sharp \mathcal V$ for all $v\in\mathcal V$), we find $\mathcal R_{\vec c}$ with nonempty interior and contained in the ball centered at zero with radius $\max_{v\in\mathcal V}|v|$, and thus it has bounded volume.

\medskip

A consequence of the above is that competitors with nonempty interior exist, and they are the only ones with nonzero volume because we are in a space of finite dimension. Now by the result of the already proved item 2 of the lemma, under condition \eqref{sumzero} any competitor of nonempty interior to \eqref{subdopt1} can be translated so that the origin is in its interior, in which case we have $c_v\ge 0$ for all $v\in\mathcal V$ and find an upper bound of $|\mathcal R_{\vec c}|$ as in the last paragraph. Now classical arguments allow to prove the existence of minimizers, and we skip them.
\end{proof}

We now state a characterization of the optimizers of \eqref{subdopt1}:

\begin{theorem}\label{optsd}
 Let $\mathcal V=\{v_1,\ldots,v_N\}\subset\R^d$ be such that \eqref{sumzero} holds, so that problem \eqref{subdopt1} has a solution. Then polyhedron $\mathcal R_{\mathcal V}$ realizing the maximum in \eqref{subdopt1} is unique up to translation, with face normals $v/|v|$ with $v\in\mathcal V$. Furthermore, there exists $\alpha_{\mathcal V}>0$ depending only on $\mathcal V$ such that for each $v\in\mathcal V$
 \begin{equation}\label{optsdeq}
 \text{The face $F_v$ of }\mathcal R_{\mathcal V}\text{ that has exterior normal }\frac{v}{|v|}\text{ has }\mathcal H^{d-1}(F_v)=\alpha_{\mathcal V} |v|.
 \end{equation}
 Furthermore, $C_{\mathcal V}=|\mathcal R_{\mathcal V}|=\frac{\alpha_{\mathcal V}}{d}$.
\end{theorem}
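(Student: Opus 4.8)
The plan is to recognize \eqref{subdopt1} as (the dual of) Minkowski's existence/uniqueness problem for convex polytopes with prescribed facet normals, and then run the classical variational argument together with Minkowski's uniqueness theorem. First I would set up the correspondence between a vector $\vec c = (c_v)_{v\in\mathcal V}$ with $\sum_v c_v = 1$ having nonempty interior (which by Lemma \ref{existcond}(2) we may assume contains the origin, so all $c_v \ge 0$ after translation) and the polytope $\mathcal R_{\vec c} = \bigcap_{v\in\mathcal V} H_v(c_v)$. For such a polytope the volume decomposes, by coning from the origin over the facets, as
\[
|\mathcal R_{\vec c}| \;=\; \frac{1}{d}\sum_{v\in\mathcal V} c_v\,\mathcal H^{d-1}(F_v),
\]
where $F_v$ is the (possibly empty) facet with outward normal $v/|v|$ at distance $c_v/|v|$ from the origin. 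I would then treat the $c_v$ as variables and compute the first variation of the volume: the classical formula is $\partial |\mathcal R_{\vec c}|/\partial c_v = \mathcal H^{d-1}(F_v)/|v|$ (moving a supporting hyperplane outward by $dc_v/|v|$ sweeps volume $\mathcal H^{d-1}(F_v)\,dc_v/|v|$). This is the step that needs the most care — one must justify differentiability, handle facets that degenerate to lower dimension, and note the formula holds one-sidedly at the boundary of the feasible region; I would cite Alexandrov \cite[Ch.~7]{alexandrov2005convex} for the technical details as the excerpt permits.

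Next I would impose the Lagrange condition for the constrained maximum of $|\mathcal R_{\vec c}|$ over the simplex $\{\sum_v c_v = 1\}$. Stationarity forces $\mathcal H^{d-1}(F_v)/|v|$ to be independent of $v$, i.e. there is a constant $\alpha_{\mathcal V} > 0$ with $\mathcal H^{d-1}(F_v) = \alpha_{\mathcal V}\,|v|$ for every $v\in\mathcal V$; this is exactly \eqref{optsdeq}. Positivity of $\alpha_{\mathcal V}$ and the fact that \emph{every} $v\in\mathcal V$ actually contributes a genuine $(d-1)$-dimensional facet follows because $\mathrm{Span}\,\mathcal V = \mathbb R^d$ and $\sum_v v = 0$ (condition \eqref{sumzero}): if some facet were missing or lower-dimensional, one could perturb the corresponding $c_v$ downward, redistribute the mass, and strictly increase the volume, contradicting optimality. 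Substituting \eqref{optsdeq} back into the volume decomposition and using $\sum_v c_v = 1$ gives immediately
\[
C_{\mathcal V} \;=\; |\mathcal R_{\mathcal V}| \;=\; \frac{1}{d}\sum_{v\in\mathcal V} c_v\,\alpha_{\mathcal V}|v|\cdot\frac{1}{|v|}\cdot|v| \;=\; \frac{\alpha_{\mathcal V}}{d}\sum_{v\in\mathcal V} c_v \;=\; \frac{\alpha_{\mathcal V}}{d},
\]
after noting the coning formula in the normalized form $|\mathcal R_{\vec c}| = \tfrac1d\sum_v c_v \mathcal H^{d-1}(F_v)/|v|\cdot|v|$; I would write this chain cleanly once the facet-area identity is in hand.

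Finally, for uniqueness up to translation I would invoke Minkowski's uniqueness theorem: two convex polytopes in $\mathbb R^d$ with the same set of facet unit-normals and the same corresponding facet $(d-1)$-volumes are translates of each other. Since any optimizer must, by the stationarity argument above, have facet normals exactly $\{v/|v| : v\in\mathcal V\}$ and facet areas $\alpha_{\mathcal V}|v|$ — and $\alpha_{\mathcal V}$ is pinned down by the volume being $C_{\mathcal V}$ via $\alpha_{\mathcal V} = dC_{\mathcal V}$ — all optimizers share the same Minkowski data, hence are translates of a single polytope $\mathcal R_{\mathcal V}$. The main obstacle is the rigorous first-variation computation of the volume as a function of the support numbers $c_v$, including the degenerate cases where facets appear or vanish along the boundary of the feasible simplex; everything else is bookkeeping plus the cited Minkowski theorems. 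I would present the variational part at the level of the mixed-volume identity $|\mathcal R_{\vec c+\vec t}| = |\mathcal R_{\vec c}| + \sum_v \frac{t_v}{|v|}\mathcal H^{d-1}(F_v) + o(|\vec t|)$ and refer to \cite{alexandrov2005convex} for the full proof, as the excerpt says the proofs are sketched in the appendix.
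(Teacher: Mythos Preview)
Your proposal is correct and follows essentially the same route as the paper: existence from Lemma~\ref{existcond}, the first-variation formula $\partial|\mathcal R_{\vec c}|/\partial c_k = \mathcal H^{d-1}(F_k)/|v_k|$, Lagrange multipliers to get $\mathcal H^{d-1}(F_v)=\alpha_{\mathcal V}|v|$, the argument that every hyperplane must touch the optimizer, and the pyramid decomposition to obtain $C_{\mathcal V}=\alpha_{\mathcal V}/d$, with details deferred to Alexandrov. Two small remarks: your first displayed coning identity is missing the factor $1/|v|$ (the height of the cone over $F_v$ is $c_v/|v|$, not $c_v$), though you catch this in the later computation; and note that the paper derives Minkowski's uniqueness theorem as a \emph{corollary} of Theorem~\ref{optsd} rather than invoking it as an input, so citing Minkowski uniqueness directly is logically fine but slightly reverses the paper's intended direction of implication.
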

The main ingredients of the proof are the following:
\begin{itemize}
 \item The optimizer exists and is bounded due to Lemma \eqref{existcond}, and we call it $\mathcal R_{\vec c}$.
 \item For each $k=1,\ldots, N$ the hyperplane $\partial H_{v_k}(c_k)$ intersects $\partial R_{\vec c}$ because otherwise there would be the freedom to diminish $c_k$ and increase the $c_j, j\neq k$, which would increase the volume of $\mathcal R_{\vec c}$.
 \item We have $\frac{\partial R_{\vec c}}{\partial c_k}=\frac{\mathcal H^{d-1}(\partial H_{v_k}(c_k)\cap \mathcal R_{\vec c})}{|v_k|}$ for each $k$. Indeed if we increase $c_k$ by $\epsilon>0$ small enough, then $\mathcal R_{\vec c}$ will increase by a region which can be approximated as a prism with height $\epsilon/|v_k|$ and base $\partial H_{v_k}(c_k)\cap \mathcal R_{\vec c}$. For more details see \cite[7.2.2]{alexandrov2005convex}.
 \item Now the theorem follows by using Lagrange multipliers.
 \item Up to translation, $\mathcal R_{\mathcal V}$ contains the origin and then $c_k\ge 0$ for all $1\le k\le N$. Then $\mathcal R_{\mathcal V}$ is the union of pyramids with one vertex at the origin, heights $c_k/|v_k|$ and bases $F_k$. Thus we have 
 \[
 |\mathcal R_{\mathcal V}|=\frac1d\sum_{k=1}^N\frac{c_k}{|v_k|} \mathcal H^{d-1}(F_k)=\frac{\alpha_{\mathcal V}}d\sum_k c_k=\frac{\alpha_{\mathcal V}}d,
 \]
 which gives the last statement of the theorem.
\end{itemize}
For more details we again refer the reader to \cite[7.2]{alexandrov2005convex}. 
\begin{remark}[Computation of the maximum value $\alpha_{\mathcal V}/d$]
 We are not aware of a general formula for $\alpha_{\mathcal V}$ in terms of $\mathcal V$. This problem is equivalent to the one of computing the volume of a general convex polyhedron given its normals and face areas. See however iterative algorithms present in \cite{lasserre1983analytical} and for the $3$ dimensional case \cite{sellaroli2017algorithm}.

\end{remark}

We also note that a corollary of Theorem \ref{optsd}, we have Minkowski's famous theorem:
 \begin{theorem}[Minkowski]\label{mink}
   If $\nu_1,\ldots,\nu_N$ are unit vectors in an Euclidean space and $f_1,\ldots,f_N>0$ are positive numbers, then a convex polyhedron with face normals $\nu_1,\ldots,\nu_N$ and $d-1$-dimensional area of the face with normal $\nu_k$ equal to $f_k$ for $1\le k\le N$ exists if and only if 
   \begin{equation}\label{existsopt}
    \sum_{k=1}^Nf_i\nu_i=0,
   \end{equation}
and is unique up to translations.
 \end{theorem}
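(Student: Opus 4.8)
The plan is to derive Minkowski's theorem (Theorem~\ref{mink}) as a corollary of Theorem~\ref{optsd}, rather than prove it from scratch. The key observation is that Theorem~\ref{optsd} already provides both the existence and the uniqueness we need, once we encode the prescribed face areas $f_1,\dots,f_N$ into the optimization problem \eqref{subdopt1}. Concretely, I would set $\mathcal V := \{f_1\nu_1,\dots,f_N\nu_N\}$, so that the scaling relation \eqref{optsdeq}, which reads $\mathcal H^{d-1}(F_v) = \alpha_{\mathcal V}|v|$, becomes $\mathcal H^{d-1}(F_{f_k\nu_k}) = \alpha_{\mathcal V} f_k$ for each $k$. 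The hypothesis \eqref{existsopt}, namely $\sum_k f_k\nu_k = 0$, is then exactly the condition $\sum_{v\in\mathcal V} v = 0$ in \eqref{sumzero}; together with the fact that a nontrivial positive combination of the $f_k\nu_k$ vanishes only if the $\nu_k$ positively span $\mathbb R^d$ (so $\mathrm{Span}\,\mathcal V = \mathbb R^d$), Lemma~\ref{existcond}(1) tells us problem \eqref{subdopt1} has a solution, and Theorem~\ref{optsd} then produces a polyhedron $\mathcal R_{\mathcal V}$, unique up to translation, whose face with normal $\nu_k$ has area $\alpha_{\mathcal V} f_k$.

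This gives a polyhedron with the \emph{right normals} and with face areas \emph{proportional} to the prescribed $f_k$, with proportionality constant $\alpha_{\mathcal V}$. The remaining step is to rescale: replacing $\mathcal R_{\mathcal V}$ by $\lambda \mathcal R_{\mathcal V}$ multiplies each $(d-1)$-dimensional face area by $\lambda^{d-1}$, so choosing $\lambda = \alpha_{\mathcal V}^{-1/(d-1)}$ yields a convex polyhedron with face normals $\nu_k$ and face with normal $\nu_k$ of area exactly $f_k$. For the converse direction — that \eqref{existsopt} is \emph{necessary} — I would use the elementary divergence/Minkowski identity: for any bounded convex polyhedron $P$ with outer unit face normals $\nu_k$ and face areas $f_k$, one has $\sum_k f_k \nu_k = \int_{\partial P}\nu\, d\mathcal H^{d-1} = 0$ (each coordinate is $\int_P \mathrm{div}(e_i)\,dx = 0$ by the divergence theorem). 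Finally, uniqueness up to translation is inherited directly from the uniqueness clause of Theorem~\ref{optsd}, since any polyhedron with the prescribed normals and areas, after scaling by $\alpha_{\mathcal V}^{1/(d-1)}$, is an optimizer of \eqref{subdopt1} for this $\mathcal V$ (its volume equals $\frac{1}{d}\sum_k \frac{c_k}{|f_k\nu_k|}\mathcal H^{d-1}(F_k)$ with the same structure, matching the maximal value $\alpha_{\mathcal V}/d$).

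I expect the only genuinely delicate point to be verifying that the polyhedron handed to us by Theorem~\ref{optsd} actually \emph{uses all} the facets $\nu_1,\dots,\nu_N$ — i.e., that none of the bounding hyperplanes $\partial H_{f_k\nu_k}(c_k)$ is redundant — and that conversely any competitor with the prescribed data genuinely optimizes. The excerpt's proof sketch of Theorem~\ref{optsd} addresses the first issue (each hyperplane must meet $\partial \mathcal R_{\vec c}$, else one could lower $c_k$ and raise the others to increase the volume); since the $f_k$ are assumed strictly positive, all faces survive with positive area. The second issue — that a prescribed-data polyhedron is an optimizer — follows because its face areas $f_k$ are forced, by the Lagrange-multiplier characterization in the proof of Theorem~\ref{optsd}, to be proportional to $|f_k\nu_k| = f_k$ with the \emph{same} universal constant $\alpha_{\mathcal V}$; one should be slightly careful here to invoke the uniqueness in Theorem~\ref{optsd} at the level of the normalized problem and then transport the conclusion back by the scaling. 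None of this requires new ideas beyond what is already set up, so the corollary is essentially a bookkeeping exercise once $\mathcal V$ is chosen as above.
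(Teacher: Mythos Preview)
Your approach is correct and is exactly the one the paper indicates: set $v_k = f_k\nu_k$ so that $\nu_k = v_k/|v_k|$ and condition \eqref{existsopt} becomes \eqref{sumzero}, then read off existence, the face-area relation, and uniqueness from Theorem~\ref{optsd} (with a final rescaling to remove $\alpha_{\mathcal V}$). One small caveat: the claim that $\sum_k f_k\nu_k = 0$ with $f_k>0$ forces $\mathrm{Span}\,\mathcal V = \mathbb R^d$ is false as stated (e.g.\ $\nu_1=-\nu_2=e_1$ in $\mathbb R^2$); the spanning hypothesis is really an implicit assumption of the theorem (needed for the polyhedron to be bounded), not a consequence of \eqref{existsopt}.
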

The connection of Theorem \ref{mink} to the setup of the rest of this section is that $\nu_k=v_k/|v_k|$ and \eqref{existsopt} then corresponds to \eqref{sumzero}. As mentioned before, Minkowski's proof idea was equivalent to the one approach presented here, and in a way our presentation just reformulates Minkowski's proof in our setting.

\subsection{Comparison to a related ``Wulff shape'' problem}

The condition $\sum_{v\in \mathcal V} c_v=1$ relates directly to our setup for involving the discrete graph laplacian. It is interesting to compare the minimization \eqref{subdopt1} itself to an isotropic isoperimetric problem. What seems the smallest perturbation of our problem in that direction would be to replace the constraint in \eqref{subdopt1} by the fixed-area constraint 
 \begin{equation}\label{anisoper}
\mathcal H^{d-1}\left(\partial\bigcap_{v\in\mathcal V}H_v(c_v)\right)=1.
 \end{equation} 
Then, if we optimize with respec to the $c_v$ the same quantity as in \eqref{subdopt1} with constraint \eqref{anisoper}, we find that the optimizer exists if and only if the one in the initial problem \eqref{subdopt1} exists, and minimizers of the new and old problem are related by translation and dilation by a factor depending only on $\mathcal V$. Then we could proceed as follows:
\begin{enumerate}
\item[1.] The problem constructed in the previous step has a solution then this is up to dilation also the solution of the dual problem 
\begin{equation}\label{isoper}
 \min\left\{\mathcal H^{d-1}(\partial R):\ \exists \vec c\in\R^{\mathcal V}, \ \mathcal R=\bigcap_{v\in\mathcal V}H_v(c_v), |\mathcal R|=1\right\}.
\end{equation}
\item[2.] Problem \eqref{isoper} can be rephrased as an anisotropic perimeter problem (below $\nu$ is a $\mathcal H^{d-1}$-measurable function which equals almost everywhere on $\partial R$:
\begin{subequations}\label{isoper2}\begin{equation}
 \min\left\{\int_{\partial R} F(\nu(x))\ d\mathcal H^{d-1}(x):\ \partial\mathcal R\mbox{ rectifiable, }  |\mathcal R|=1\right\},\end{equation}
 where
 \begin{equation}
 F:\mathbb S^{d-1}\to [0,+\infty],\qquad F(\nu)=\left\{\begin{array}{ll} 1&\mbox{ if }\nu\in\mathcal V,\\ +\infty&\mbox{ else.}\end{array}\right.
\end{equation}
\end{subequations}

\end{enumerate}
The solution to \eqref{isoper2} is given in e.g. in \cite{taylor} and the optumal $\mathcal R$ is the ``Wulff shape'' associated to the weight $F$, which is termed an ``extended integrand'' in \cite{taylor}. Explicitly, all solutions equal up to translation the set
\begin{equation}\label{solwulff}
  W_{\mathcal V}:=\bigcap_{v\in\mathcal V}H_{\frac{v}{|v|}}(1)=\bigcap_{v\in\mathcal V}H_{v}(|v|).
\end{equation}
The simple expression \eqref{solwulff} shows that in general faces of the isoperimetric optimizer will not have area $|v_k|$ as in Theorem \ref{optsd}, thus the Wulff and Minkowski optimization problems are quite different from each other.

\subsection{Application to the subdifferential optimization problem}

Coming back to the solution $u:\Omega\to\mathbb R$ of $\Delta_Au(x)\le C(\Omega,g)$ and $u(y)-u(x)=\frac{g(x,y)}{A(y,x)}$ for $(x,y)\in\overrightarrow{\partial\Omega}$, for fixed $x\in\Omega$ we apply Theorem \ref{optsd} with the following choices:
\begin{equation}\label{vx}
 \mathcal V_x:= \{(y-x)A^2(x,y):\ A(x,y)\neq 0\},
\end{equation} 
and for $v=(y-x)A^2(y,x)\in\mathcal V_x$ define 
\begin{equation*}
c_v=\frac{A^2(y,x)(u(y)-u(x))}{\Delta_Au(x)}.
\end{equation*}
Then we have (note that $\lambda\cdot A=\{\lambda x: \ x\in A\}$ if $A\subset \mathbb R^d, \lambda\in\mathbb R$ is the dilation of a set)
\[
\bigcap_{v\in\mathcal V_x}H_v(c_v)=\frac{1}{\Delta_Au(x)}\cdot\partial^{\mathrm{prox}}u(x),\quad\quad \sum_{v\in\mathcal V_x}c_v=1,
\]
and with these conversions, from the results of the previous section we obtain with little effort the below:
\begin{proposition}\label{equalitysubdiff}
With notations \eqref{subdopt1} and \eqref{vx}, for any function $u:\{x\}\cup\{y:\ x\sim y\}\to \mathbb R$ there holds
 \begin{equation}\label{eqsd}
 |\partial^{\mathrm{prox}}u(x)| \le C_{\mathcal V_x}(\Delta_Au(x))^d.
 \end{equation}
The following are equivalent for a function $u$ as above, if $F_{x,y}^{\mathrm{prox}}$ denotes the face of $\partial^{\mathrm{prox}}u(x)$ with normal vector $\frac{y-x}{|y-x|}$:
\begin{enumerate}
\item[(a)] The function $u$ as achieves the equality in \eqref{eqsd}. 
\item[(b)] For each $y\sim x$ there holds 
\begin{equation}\label{condeqsd}
\frac{\mathcal H^{d-1}(F_{x,y}^{\mathrm{prox}})}{|\Delta_A u(x)|^{d-1}}= d\ C_{\mathcal V_x}|y-x|A^2(x,y).
\end{equation}
\item[(c)] There exists a constant $\tilde \alpha_{\mathcal V_x}$ such that for each $y\sim x$ we have
\begin{equation}\label{alphavx}
 \frac{\mathcal H^{d-1}(F_{x,y}^{\mathrm{prox}})}{|y-x|A^2(x,y)}=\tilde\alpha_{\mathcal V_x}.
\end{equation}
\end{enumerate}

Moreover the following conditions are equivalent:
 \begin{enumerate}
     \item There exists $u$ as above such that equality is achieved in \eqref{eqsd}.
     \item There holds $\mathrm{Span}\{y-x:\ y\sim x\}=\mathbb R^d$ and $\sum_{y:y\sim x}(y-x)A^2(x,y)=0$.
     \item There holds $\mathrm{Span}\{y-x:\ y\sim x\}=\mathbb R^d$ and $\Delta_A$ has linear precision at $x$, i.e. $\Delta_A\ell(x)=0$ for each $\ell:\{x\}\cup\{y:\ y\sim x\}\to \mathbb R$ which is the restriction of an affine function.
 \end{enumerate}
 Moreover, if $u$ achieving the equality in \eqref{eqsd} exists, then it is unique up to summing to it the restriction of an affine function.
\end{proposition}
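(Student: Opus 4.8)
The plan is to obtain every assertion of Proposition~\ref{equalitysubdiff} by transporting the polyhedral optimization results of Section~\ref{sec4} --- namely Lemma~\ref{existcond}, Theorem~\ref{optsd} and Theorem~\ref{mink} --- through the dictionary introduced in \eqref{vx} and in the lines right after it. First I would fix $x$, write $\mathcal V_x=\{v_y:=(y-x)A^2(x,y):\ y\sim x\}$, and for $v=v_y\in\mathcal V_x$ set $c_v:=A^2(x,y)(u(y)-u(x))$. Dividing the defining inequalities of $\partial^{\mathrm{prox}}u(x)$ by $A^2(x,y)>0$ then gives $v/|v|=(y-x)/|y-x|$, $\partial^{\mathrm{prox}}u(x)=\bigcap_{y\sim x}\{p:\langle p,y-x\rangle\le u(y)-u(x)\}=\bigcap_{v\in\mathcal V_x}H_v(c_v)$, and $\sum_{v\in\mathcal V_x}c_v=-\Delta_Au(x)$. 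From here the bound \eqref{eqsd} is immediate: if $\mathcal V_x$ fails \eqref{sumzero} it is vacuous, since $C_{\mathcal V_x}=+\infty$ by Lemma~\ref{existcond}; otherwise I would upgrade \eqref{arithmgeomgen}, using the positive $d$-homogeneity of $\vec c\mapsto|\bigcap_vH_v(c_v)|$, to the form $|\bigcap_{v}H_v(c_v)|\le C_{\mathcal V_x}(\sum_vc_v)^d$ valid whenever $\sum_vc_v\ge0$, and apply it to the $c_v$ above. The remaining cases, with $-\Delta_Au(x)<0$ (so $\partial^{\mathrm{prox}}u(x)$ is empty or, under \eqref{sumzero}, bounded with nonnegative constant-sum), I would dispatch by hand.

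For the equivalence (a)$\Leftrightarrow$(b)$\Leftrightarrow$(c) the core point is that --- away from $\Delta_Au(x)=0$, which the denominators in (b) and (c) implicitly exclude --- equality in \eqref{eqsd} holds exactly when $(c_v)$, normalized so its entries sum to $1$, is a maximizer of \eqref{subdopt1}; by Theorem~\ref{optsd} this is equivalent to $\partial^{\mathrm{prox}}u(x)$ being a translate of a dilate $\lambda\,\mathcal R_{\mathcal V_x}$. The factor $\lambda$ is forced: using $\sum_{v\in\mathcal V_x}v=0$ together with \eqref{thvc}, every translate of $\lambda\mathcal R_{\mathcal V_x}$ has defining constants summing to $\lambda$, so $\lambda=|\Delta_Au(x)|$. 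Since dilation by $\lambda$ multiplies $\mathcal H^{d-1}$ by $\lambda^{d-1}$, and \eqref{optsdeq} together with the identity $C_{\mathcal V}=\alpha_{\mathcal V}/d$ gives $\mathcal H^{d-1}(F_v)=dC_{\mathcal V_x}|v|$ for $\mathcal R_{\mathcal V_x}$, the face $F^{\mathrm{prox}}_{x,y}$ of $\partial^{\mathrm{prox}}u(x)$ satisfies $\mathcal H^{d-1}(F^{\mathrm{prox}}_{x,y})=|\Delta_Au(x)|^{d-1}\,dC_{\mathcal V_x}\,|y-x|A^2(x,y)$, which is exactly \eqref{condeqsd}, i.e., statement (b). The equivalence (b)$\Leftrightarrow$(c) is then a rearrangement: \eqref{condeqsd} asserts that the ratio $\mathcal H^{d-1}(F^{\mathrm{prox}}_{x,y})/(|y-x|A^2(x,y))$ equals the $x$-dependent constant $dC_{\mathcal V_x}|\Delta_Au(x)|^{d-1}$, while conversely constancy of that ratio makes the face areas of $\partial^{\mathrm{prox}}u(x)$ proportional to $|v|$ with the prescribed normals, whence, by the uniqueness part of Minkowski's theorem (Theorem~\ref{mink}, equivalently Theorem~\ref{optsd}), $\partial^{\mathrm{prox}}u(x)$ is a translate of a dilate of $\mathcal R_{\mathcal V_x}$ and thus attains the maximum in \eqref{eqsd}. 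I expect this step to be the main obstacle: conceptually it is only bookkeeping, but one must keep the dilation exponents, the identity $C_{\mathcal V}=\alpha_{\mathcal V}/d$ and the recovery of $\lambda$ from constant-sums mutually consistent, and the genuinely degenerate configurations ($\partial^{\mathrm{prox}}u(x)$ of measure zero, or unbounded) must be excluded or verified directly.

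For the second group of equivalences I would again use Lemma~\ref{existcond}: problem \eqref{subdopt1} for $\mathcal V_x$ has a finite --- hence attained and bounded --- maximum iff $\mathrm{Span}\,\mathcal V_x=\mathbb R^d$ and $\sum_{v\in\mathcal V_x}v=0$, which, since every $A^2(x,y)>0$, is literally $\mathrm{Span}\{y-x:y\sim x\}=\mathbb R^d$ together with $\sum_{y\sim x}(y-x)A^2(x,y)=0$, i.e., condition~2. The implication ``condition~2 $\Rightarrow$ some $u$ achieves equality in \eqref{eqsd}'' I would make explicit by taking $u(x)=0$ and choosing $u(y)$ so that the constants $c_v=A^2(x,y)(u(y)-u(x))$ form a maximizing vector for \eqref{subdopt1}, giving $\partial^{\mathrm{prox}}u(x)=\mathcal R_{\mathcal V_x}$; the converse is immediate since such a $u$ exhibits a bounded nondegenerate optimizer, forcing existence of a maximizer. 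Finally condition~2 $\Leftrightarrow$ condition~3 is the one-line computation that for an affine $\ell(z)=a+\langle b,z\rangle$ one has $\Delta_A\ell(x)=-\langle b,\sum_{y\sim x}(y-x)A^2(x,y)\rangle$, so, given the span condition, linear precision of $\Delta_A$ at $x$ is exactly the vanishing of $\sum_{y\sim x}(y-x)A^2(x,y)$.

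For the uniqueness statement I would use that, for a $u$ achieving equality in \eqref{eqsd}, every defining hyperplane $\partial H_v(c_v)$ is an active facet of $\partial^{\mathrm{prox}}u(x)$ --- this is precisely the second of the ingredients listed in the proof of Theorem~\ref{optsd} --- so the constants $c_v$, hence the increments $u(y)-u(x)$, hence $u$ up to an additive constant, are determined by the polytope $\partial^{\mathrm{prox}}u(x)$ alone. By Theorem~\ref{optsd} that polytope is determined up to translation once its size is fixed, equivalently once the value $\Delta_Au(x)=-\sum_vc_v$ is fixed; and by \eqref{thvc}, translating it by $w$ amounts exactly to replacing $u$ by $u+\langle w,\cdot\rangle$. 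Together with the free additive constant, this gives uniqueness of $u$ up to an affine function, and I would note, consistently with condition~2, that adding an affine function leaves $\Delta_Au(x)$ unchanged.
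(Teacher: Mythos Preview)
Your proposal is correct and follows essentially the same route as the paper: both reduce everything to Lemma~\ref{existcond} and Theorem~\ref{optsd} via the dictionary \eqref{vx}, identifying $\partial^{\mathrm{prox}}u(x)$ with $\bigcap_v H_v(c_v)$ and equality in \eqref{eqsd} with optimality in \eqref{subdopt1}. Your treatment is in fact more explicit than the paper's --- you spell out the recovery of the dilation factor $\lambda=|\Delta_Au(x)|$, invoke Minkowski's theorem (Theorem~\ref{mink}) for the (c)$\Rightarrow$(a) direction, and handle the degenerate cases ($\mathcal V_x$ failing \eqref{sumzero}, $\Delta_Au(x)=0$) by hand, whereas the paper's proof leaves these implicit and contains some apparent misnumberings in its second block of equivalences.
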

\begin{proof}
Translating the condition \eqref{optsdeq} for the optimal shape 
\[
\mathcal R_{\mathcal V_x}=\frac{1}{|\Delta_Au(x)|}\cdot\partial^{\mathrm{prox}}u(x),
\]
 we obtain the equivalence $(a)\Rightarrow (b)$.  The uniqueness of the optimal shape, from Theorem \ref{optsd}, gives the opposite implication. The equivalence between \eqref{condeqsd} and \eqref{alphavx} is consequence of the condition \eqref{optsdeq} from Theorem  \ref{optsd}, in which again we take $\mathcal V=\mathcal V_x$ as defined in \eqref{vx}, and $\tilde\alpha_{\mathcal V_x}=\alpha_{\mathcal V_x}/|\Delta_Au(x)|^{d-1}$. 
 
\medskip

The equivalence of (1), (3) follows Lemma \ref{existcond}, and the equivalence of (2), (3) follows from Theorem \ref{optsd} as explained before the statement of the proposition. 

For the equivalence of (3) and (4) we note that $\Delta_A$ vanishes on constant functions for all $A$, thus we may equivalently verify the vanishing condition in (4) only for linear functions. Now condition (3) states that $\Delta_Ax=0$ where $\Delta_A$ is applied componentwise to the identity, whose components form a basis for the space of linear functions $\ell:\mathbb R^d\to \mathbb R$, which implies its equivalence to (4).

The uniqueness statement about $u$ follows from the uniqueness part of Theorem \ref{optsd}, noting that $\Delta_A(u+\ell)=\Delta_Au$ if $A$ satisfies (4), and that $\partial^{\mathrm{prox}}(u+\ell)(x)=\partial^{\mathrm{prox}}u(x) + p$ if $\ell(x)=p\cdot x+ x_0$ is an affine function.
\end{proof}

\section{Discrete isoperimetric equality conditions}\label{sec5}
As a consequence of the study of \eqref{proofscheme} we find that equalities at all inequalities can only hold if $H_g$ is partitioned into the $\partial^{\mathrm{prox}}u(x),x\in\Omega$. In turn, by Proposition \ref{equalitysubdiff}, the $\partial^{\mathrm{prox}}u(x)$ need to all have faces $F_{x,y}$ perpendicular to the directions $y-x$ for $x\sim y$, and this is only possible if $\Delta_A$ has linear precision. If the last inequality becomes equality in \eqref{proofscheme} then we have $\Delta_Au$ constant over $\Omega$, and thus all $\partial^{\mathrm{prox}}u(x)$ have equal volumes too. 

\medskip

We note that the condition that $\Delta_AF=0$ for affine functions $F$ can also be rephrased as saying that $A^2(x,y)|x-y|$ is a tension over the graph with vertices $V$ and edges given by $\{x,y\}$ such that $A(x,y)\neq 0$. This condition was first studied in detail by Cremona \cite{cremona1879figure} and Maxwell \cite{maxwell1864xlv}, and we refer to \cite{rybnikov} and the references therein, for a generalized setup. We here collect results from the theory of stresses and tensions, which help to settle the existence and theorically allow to determine isoperimetric shapes either with weights or without weights.

\medskip

We introduce in Section \ref{survey} a series of definitions and a survey of classification results that allow to connect distinct interpretations of our semidiscrete data. The results cited in this section are not used in our proofs but seem worth to mention for the sake of creating a more complete picture of the data we are discussing in the proofs. In Section \ref{sec53} we use the definitions from Section \ref{sec53} as well as results from all the previous sections, and prove our main result, Theorem \ref{specialiso}.

\subsection{Survey: Isomorphisms between Reciprocals, Liftings, Pogorelov and Aleksandrov solutions, for general rectilinear cell decompositions}\label{survey}\hfill

We introduce here some results of Rybnikov \cite{rybnikov} and Aurenhammer \cite{aurenhammer1987criterion} related to our setting, as a way of introducing semidiscrete optimal transport notation. The connection of this setup to Optimal Transport appears in \cite{merigot},\cite{benamoufroese}.

\medskip

We invite the reader to consult \cite{rybnikov}, who develops the very elegant fundamental setup in which one can define ad-hoc obstructions for the extension of given objects (such as convex functions, dual complexes, suitable liftings, etc.) around combinatorial cycles, and in providing conditions under which they vanish. Note that Rybnikov's results are available for general piecewise-linear realizations of manifolds with vanishing first $\mathbb Z/2\mathbb Z$-homology group and satisfying local combinatorial non-degeneracy conditions. For simplicity (especially, in order to avoid all discussions about fixing orientations) we restrict to the case of $\mathbb R^d$ here, although extensions of our methods and results to Rybnikov's more general setting would be also possible. 

\medskip

Here are the notations and definitions that we need. In the below, $\mathcal K$ will denote a $d$-dimensional simplicial complex and $K$ will be its piecewise-linear realization of $\mathcal K$ in $\mathbb R^d$. By this we mean that each $k$-dimensional cell of $K$ is a $k$-dimensional polyhedron identified bijectively to a $k$-cell in $\mathcal K$, so that the combinatorial cell complex structure of $\mathcal K$ is conjugated by this bijection to the piecewise affine cell complex structure of $K$. We assume that orientations are defined on cells of $K$ and are compatible with the combinatorial boundary operators. Note that no immersion or embedding requirements are made on this identification at this point. A further regularity condition is needed on our complexes. We only apply the below results for the case of convex bounded $k$-cells, however Rybnikov's weaker assumption is that each $k$-cell has a homology $(k-1)$-sphere as boundary.
 
\begin{itemize}

\item A {\bf lifting of $K$} is an assignment to each $k$-cell $C$ of $K$, for all $k=0,\ldots,d$, of an affine $\mathbb R$-valued function $L(C)$, so that $L(C)|_F=L(F)$ for each $F\in\partial C$. A lifting is called {\bf locally convex} if for each subcomplex $K'$ of $K$ which is embedded into $\mathbb R^d$, the restrictions of $L$ to cells in $K'$ coincide with a convex function. A lifting is called {\bf sharp} if $\nabla L(C)\neq \nabla L(C')$ whenever $C,C'$ are adjacent $d$-cells in $K$.


\item The {\bf combinatorial dual graph} of $\mathcal K$, denoted $\mathcal{G}(\mathcal K)$, is a graph whose vertices correspond to $d$-cells of $\mathcal K$, and whose edges correspond to internal $(d-1)$-cells of $\mathcal K$.

\item A {\bf reciprocal of $K$} is a piecewise-linear realization $R$ in $\mathbb{R}^d$ of the combinatorial dual graph  $\mathcal{G}(\mathcal K)$ such that the edges of $R$ are segments perpendicular to the corresponding $(d-1)$-dimensional facets of $K$. If none of the edges of a reciprocal collapses into a point, then the reciprocal is called {\bf non-degenerate}. We assume that an orientation on $R$ is defined and is compatible with the orientation on $K$, and if furthermore $R$ is an embedding then we call it {\bf convex}.
\end{itemize}

\begin{theorem}[convex liftings $\simeq$ convex reciprocals {\cite[Thm. 6.1]{rybnikov}}]\label{reclift}
If $K$ is a face-to-face polyhedral decomposition of $\mathbb R^d$ and $L: K\to \mathrm{Aff}(d)$ is a lifting of $K$, then to $L$ we associate those realizations of $\mathcal G(\mathcal K)$ that satisfy the property 
\[
 R(C)-R(C')= \nabla L(C)-\nabla L(C')\quad\text{whenever}\quad C,C'\quad\text{are adjacent $d$-cells of $K$.}
\]
This association establishes a bijection between reciprocals of $K$ defined up to a translation and liftings of $K$ defined up to summing an affine function. Moreover, sharp liftings correspond to non-degenerate reciprocals, and convex liftings correspond to convex reciprocals under the above association.
\end{theorem}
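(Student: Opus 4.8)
The plan is to prove the bijection between reciprocals of $K$ (mod translation) and liftings of $K$ (mod affine functions), together with the matching of sharpness/nondegeneracy and convexity/convexity, following the structure of a deck-transformation-type argument over the combinatorial $1$-cycles of $\mathcal G(\mathcal K)$. First I would fix a base $d$-cell $C_0$ of $K$, assign $R(C_0)$ arbitrarily (this is the translation freedom), and then for any other $d$-cell $C$ choose a combinatorial path $C_0 = D_0, D_1, \dots, D_m = C$ in $\mathcal G(\mathcal K)$ through adjacent $d$-cells, and define $R(C) := R(C_0) + \sum_{i=1}^m \bigl(\nabla L(D_i) - \nabla L(D_{i-1})\bigr)$. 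The key point is \emph{well-definedness}: the sum must not depend on the path, i.e. it must vanish around every closed loop. Since $\mathbb R^d$ is simply connected and $K$ is a face-to-face decomposition, the first homology $H_1$ of the dual graph relative to its natural $2$-complex structure (the $2$-cells being the links of $(d-2)$-faces of $K$) is generated by the elementary cycles around codimension-$2$ faces; around each such face the alternating sum of the gradient jumps $\nabla L(D_i) - \nabla L(D_{i-1})$ telescopes to zero, so $R$ is well-defined. Here I would invoke the general obstruction-vanishing mechanism of Rybnikov \cite{rybnikov} that the excerpt says we may assume, which handles exactly this kind of ``integration of a discrete closed form'' over a complex with vanishing relevant homology.

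Next I would check that the map $L \mapsto R$ is a bijection onto the set of reciprocals. That $R$ is a reciprocal amounts to checking the perpendicularity condition: if $C, C'$ are adjacent $d$-cells sharing the $(d-1)$-facet $F$, then along $F$ the two affine pieces $L(C)$ and $L(C')$ agree (both restrict to $L(F)$), so $L(C) - L(C')$ is an affine function vanishing on the affine hull of $F$; hence its gradient $\nabla L(C) - \nabla L(C') = R(C) - R(C')$ is normal to $F$, which is precisely the defining property of a reciprocal. For injectivity (mod affine functions), note that $L$ is recovered from its gradients on $d$-cells up to additive constants on each cell, and the matching-on-faces condition $L(C)|_F = L(C')|_F$ then pins down the constants up to one global affine function once one gradient is known; since the $R(C) - R(C')$ are exactly the gradient differences, $R$ determines $L$ up to affine. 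For surjectivity, run the same loop-integration in reverse: given a reciprocal $R$, the edge vectors $R(C) - R(C')$ are normal to the shared facets, so one can try to define gradients $\nabla L(C)$ by integrating these differences from a base cell; well-definedness again follows from the homology vanishing, and then $L$ is reconstructed cell by cell, the agreement on lower-dimensional faces being forced by transitivity along chains of adjacent cells.

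For the refinements: a lifting is sharp iff $\nabla L(C) \neq \nabla L(C')$ for adjacent $d$-cells, which by the correspondence $R(C) - R(C') = \nabla L(C) - \nabla L(C')$ is exactly the statement that no edge of $R$ collapses to a point, i.e. $R$ is nondegenerate — so this equivalence is immediate from the construction. The convexity equivalence is the one place requiring more care: ``locally convex lifting'' is defined via restrictions to embedded subcomplexes coinciding with a convex function, and ``convex reciprocal'' means $R$ is an embedding. I would argue that convexity of $L$ on an embedded patch is equivalent to the gradients $\nabla L(C)$ being the vertices of a convex polytope (the graph of the Legendre-type dual), whose normal fan is dual to the subdivision; the reciprocal $R$ realizing those gradient-vertices is then an embedding precisely when this fan structure is consistent, i.e. $L$ is globally (locally) convex. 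This is the standard lifting/regular-subdivision dictionary, and I would cite \cite[Thm. 6.1]{rybnikov} for the general statement, only sketching the local model (a single $(d-2)$-face, around which convexity of $L$ is a sign condition on a dihedral angle that matches the turning direction of the corresponding $2$-cell of $R$).

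The main obstacle I expect is the well-definedness (path-independence) of the integration defining $R$ from $L$ and vice versa — that is, verifying that the relevant ``curl'' of the discrete gradient-jump $1$-cochain vanishes around the generating $2$-cells (links of codimension-$2$ faces), and that these $2$-cells really do generate $H_1$ of the dual $2$-complex. Everything else (perpendicularity, reconstruction mod affine, sharp $\leftrightarrow$ nondegenerate) is a direct translation of definitions. Since the excerpt explicitly grants us Rybnikov's machinery and states this as \cite[Thm. 6.1]{rybnikov}, in the write-up I would present the loop-integration construction cleanly, reduce well-definedness to the codimension-$2$ local computation, do that local telescoping explicitly, and then defer the convexity/embedding equivalence and the general homological statement to \cite{rybnikov}, noting that in our applications $K$ is a genuine convex face-to-face decomposition where all these hypotheses are transparently satisfied.
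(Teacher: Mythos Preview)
The paper does not give its own proof of this theorem: it appears in Section~\ref{survey}, which is explicitly a survey section, and the theorem is simply quoted from Rybnikov \cite[Thm.~6.1]{rybnikov} with no proof or proof sketch attached. So there is nothing in the paper to compare your argument against.

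That said, your outline is a faithful sketch of the standard argument (and essentially of Rybnikov's): integrate the gradient-jump $1$-cochain along paths in the dual graph, reduce path-independence to vanishing around the elementary cycles about codimension-$2$ faces (where the telescoping is immediate), and read off the perpendicularity of the reciprocal edges from the fact that $L(C)-L(C')$ vanishes on the shared facet. Your handling of the sharp/nondegenerate and convex/embedding correspondences is also the right idea. One small caution on the surjectivity direction: given a reciprocal $R$, the edge vectors $R(C)-R(C')$ being normal to the facets does not by itself guarantee that the local ``curls'' around codimension-$2$ faces vanish; you need to use that $R$ is a genuine rectilinear realization (so the edge vectors around each such face do close up to zero), which you implicitly assume but should state. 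Since the paper itself defers entirely to \cite{rybnikov}, your write-up would in fact be \emph{more} than the paper provides.
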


\begin{itemize}
\item Given a discrete point configuration $V\subset \mathbb{R}^d$ and a map $w:V\to\mathbb R$, the associated \textbf{weighted Voronoi diagram} (also found under the names of: {\bf power diagram}, {\bf Laguerre diagram}, {\bf weighted Dirichlet decomposition}) is the collection of cells, indexed by $V$, where the cell associated to a given $p\in V$ is given by
\begin{equation}\label{vorocell}
C(p):=\{x\in\mathbb R^d:\ |p-x|^2-w(p) \leq |q-x|^2-w(q)\text{ for all $q \in V$}\}.
\end{equation}
Note that $C(p)$ is necessarily convex for all $p\in V$.
\end{itemize}

The proof idea for the second part of the following is present in Aurenhammer \cite{aurenhammer1987criterion}, and Rybnikov \cite[Thm 7.2 and the following observation]{rybnikov} notes that it can be extended to the present setting:

\begin{theorem}[convex liftings and weighted Voronoi diagrams {\cite{aurenhammer1987criterion},\cite{rybnikov}}]\label{liftvor}
Let $K$ be a face-to-face decomposition of $\mathbb R^d$ into convex finite polyhedra, and that $L:K\to \mathrm{Aff}(d)$ is a lifting. Then the following hold:
\begin{enumerate}
\item $L$ is locally convex if and only if there exists a convex function $\phi:\mathbb R^d\to\mathbb R$ such that $\phi|_C=L(C)|_C$ for each cell $C$ of $K$.
\item If $R$ is a reciprocal associated to $L$ as in Theorem \ref{reclift} and $V=R(\{d-\text{cells of }K\})$ are the vertices of $R$, then we can define associated weights $w:V\to[0,+\infty)$ so that the weighted Voronoi cells $\{C(p): p\in V\}$ are the $d$-cells of $K$. This association iduces a bijection between convex liftings defined up to an affine summand and weights defined up to additive constants.
\end{enumerate}
\end{theorem}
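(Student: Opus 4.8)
The plan is to prove the two assertions in turn, part (1) by a gluing argument upgraded through a local-to-global convexity principle, and part (2) by Aurenhammer's ``paraboloid lift'' together with the reciprocal dictionary of Theorem \ref{reclift}; in both cases the substantive input is \cite{aurenhammer1987criterion} and \cite{rybnikov}, so I would reduce to those after recording the elementary steps. For part (1), the implication ``such a $\phi$ exists $\Rightarrow$ $L$ is locally convex'' is immediate: the restriction of the global convex $\phi$ to (the support of) any embedded subcomplex of $K$ is a convex function agreeing with $L$ on each of its cells. For the converse I would first build a candidate $\phi:\mathbb{R}^d\to\mathbb{R}$ by gluing, putting $\phi(x):=L(C)(x)$ for $x$ in a $d$-cell $C$; this is unambiguous and continuous because $K$ is face-to-face, so two $d$-cells meet only along a common lower-dimensional face $F$ on which both affine pieces equal $L(F)$ by the compatibility clause $L(C)|_F=L(F)$ in the definition of a lifting, and the same identity gives $\phi|_F=L(F)$ on every cell $F$. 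It then remains to deduce global convexity from local convexity: the closed star of each cell of the (locally finite) decomposition $K$ is an embedded subcomplex whose interior contains the relative interior of that cell, so local convexity of $L$ forces $\phi$ to be convex on a neighbourhood of every point of $\mathbb{R}^d$, and a continuous function that is convex near each point of a convex open set is convex on it; hence $\phi$ is convex on $\mathbb{R}^d$.

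For part (2), start from a convex lifting $L$; by part (1) it extends to a convex piecewise-affine $\phi$, and convexity forces $\phi$ to be the upper envelope of its affine pieces, so writing $L(C)(x)=a_C\cdot x+b_C$ over the $d$-cells $C$ of $K$ we have $\phi(x)=\max_{C}\,(a_C\cdot x+b_C)$, a locally finite maximum, with $x\in C$ precisely when the $C$-piece attains the maximum at $x$. Completing the square turns $a_C\cdot x+b_C\ge a_{C'}\cdot x+b_{C'}$ into $|x-a_C|^2-(|a_C|^2+2b_C)\le|x-a_{C'}|^2-(|a_{C'}|^2+2b_{C'})$, so setting $p_C:=a_C=\nabla L(C)$ and $w(p_C):=|a_C|^2+2b_C$ identifies the $d$-cell $C$ with the weighted Voronoi cell $C(p_C)$ of \eqref{vorocell}. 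By Theorem \ref{reclift} the gradients $\nabla L(C)$ are, up to one common translation, exactly the reciprocal vertices $R(C)$, so $\{p_C\}$ equals the vertex set $V$ of $R$ up to translation; and since adding a common constant to all weights leaves \eqref{vorocell} unchanged, one may shift $w$ to take values in $[0,+\infty)$. Running this backwards, a nonnegative weight $w$ on $V$ yields affine pieces with gradient $p$ and intercept $\tfrac{1}{2}(w(p)-|p|^2)$ whose upper envelope is a convex lifting realizing $R$; the only ambiguity on the lifting side that keeps $R$ fixed is an additive constant $d$ added to $L$, which shifts $w$ by $2d$, and together with the translation freedom of $R$ (which translates $V$) this bookkeeping gives the stated bijection between convex liftings modulo affine summands and weights modulo additive constants.

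The step I expect to be the genuine obstacle is the local-to-global convexity passage in part (1): one must be certain that convexity of $L$ on all \emph{embedded} subcomplexes already forces the existence of a single global convex $\phi$, and this uses essentially that $K$ realizes all of $\mathbb{R}^d$ --- over a complex with nontrivial topology a locally convex lifting need not be the trace of any global convex function, and detecting this is exactly what Rybnikov's obstruction theory \cite{rybnikov} is built for. Everything else is routine once the paraboloid-lift dictionary is set up, so in the write-up I would phrase part (1) as an application of Rybnikov's local-to-global theorem and part (2) as Aurenhammer's lifting theorem \cite{aurenhammer1987criterion}, supplying in full only the square-completion computation and the normalization of the weights to be nonnegative.
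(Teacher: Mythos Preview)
The paper does not provide its own proof of this theorem: it is stated in the explicitly labelled survey Section~\ref{survey} with the attribution ``The proof idea for the second part of the following is present in Aurenhammer \cite{aurenhammer1987criterion}, and Rybnikov \cite[Thm 7.2 and the following observation]{rybnikov} notes that it can be extended to the present setting'', and no argument follows the statement. So there is no in-paper proof to compare your proposal against.

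That said, your proposal is exactly the standard argument from the cited sources, correctly executed. For part~(2) the completing-the-square computation $a_C\cdot x+b_C\ge a_{C'}\cdot x+b_{C'}\Leftrightarrow |x-a_C|^2-(|a_C|^2+2b_C)\le |x-a_{C'}|^2-(|a_{C'}|^2+2b_{C'})$ is precisely Aurenhammer's paraboloid-lift dictionary, and your identification $p_C=\nabla L(C)$, $w(p_C)=|a_C|^2+2b_C$ is the right one. For part~(1) you correctly isolate the nontrivial step---passing from local convexity on embedded stars to global convexity of the glued $\phi$---and correctly defer it to Rybnikov's obstruction machinery, which is what the paper itself does by citation. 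One small point to tighten in your bookkeeping for the bijection: adding a linear summand $c\cdot x$ to $L$ translates $V$ by $c$, but the induced change in the weights is $w\mapsto w+2p\cdot c+\mathrm{const}$, which is affine in $p$, not constant; the theorem's ``weights up to additive constants'' should therefore be read relative to a \emph{fixed} reciprocal $R$ (so $c=0$), with the linear freedom in $L$ already absorbed by the translation ambiguity of $R$ from Theorem~\ref{reclift}. Your sentence ``the only ambiguity on the lifting side that keeps $R$ fixed is an additive constant'' gets this right, but the final clause conflates the two quotients slightly---just separate them cleanly.
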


Finally, we note the connection to classical Optimal Transport theory. In optimal transport theory the result is usually proved for the case of finite $V$ (or $K$), see e.g. \cite[Thm. 2.3]{benamoufroese}, or \cite[Thm. 3 ]{merigot}, as well as the presentation in Section \ref{aleksintro}. The original idea from \cite[Thm. 2.1]{aurenhammer1987criterion} directly generalizes to the infinite case, which allows a more general connection to the setting we are treating in our survey. 

\begin{theorem}[convex lifitings and semidiscrete Optimal Transport solutions]\label{vorot}
Assume that $K$ forms a face-to-face decomposition of $\mathbb R^d$ into convex finite polyhedra. Let $\phi:\mathbb R^d\to\mathbb R$ be a piecewise affine convex lifting of $K$ and $R$ be a convex reciprocal of $K$ corresponding to $\phi$ according to Theorem \ref{reclift}. Then the following hold:
\begin{enumerate}\item There exists a unique convex function $u:\mathbb R^d\to \mathbb R$ such that the subdifferentials satisfy $\partial u(p)=C(p)$ and that $\phi(x)=\langle x, p\rangle - u(p)$ for all $p\in V$ and $x\in C(p)$. Furthermore $u$ is its Legendre-Fenchel transform of $\phi$. 

\item If $V$ is the vertex set of $R$ then for each $V'\subset V$ finite, $\partial u$ is the unique optimal transport map from the measure $\sum_{p\in V'}|C(p)|\delta_p$ to the restriction of the Lebesgue measure to $\bigcup_{p\in V'}C(p)$, with respect to the cost $|x-y|^2$.
\end{enumerate}
\end{theorem}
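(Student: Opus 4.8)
The plan is to derive both statements from the preceding theorems together with the classical Legendre--Fenchel duality. First I would construct $u$ as the Legendre--Fenchel transform of $\phi$, namely $u(p):=\sup_{x\in\mathbb R^d}\{\langle x,p\rangle-\phi(x)\}$, and restrict attention to $p$ ranging over the vertex set $V=R(\{d\text{-cells of }K\})$. Since $\phi$ is piecewise affine and convex with $\phi|_C=L(C)|_C$, on each $d$-cell $C=C(p)$ (identified with a Voronoi cell via Theorem \ref{liftvor}) the gradient $\nabla\phi$ is the constant vector $p\in V$; this is precisely the content of the reciprocal correspondence $R(C)-R(C')=\nabla L(C)-\nabla L(C')$ from Theorem \ref{reclift}, which forces $\nabla L(C)=R(C)=p$ up to the overall translation we fix. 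Consequently the supremum defining $u(p)$ is attained exactly on $x\in C(p)$, giving $\phi(x)=\langle x,p\rangle-u(p)$ for all $x\in C(p)$, and by convex duality $x\in\partial u(p)$ for such $x$. The reverse inclusion $\partial u(p)\subset C(p)$ follows because $\phi=u^*$ and $\phi$ is finite everywhere: if $x\in\partial u(p)$ then $p\in\partial\phi(x)$, and since $\nabla\phi=p$ only on the closure of $C(p)$, we get $x\in\overline{C(p)}$, hence $x\in C(p)$ (cells are closed in this setup). Uniqueness of $u$ among convex functions with $\partial u(p)=C(p)$ for all $p\in V$ is standard: two such functions have the same subdifferential map on a set $V$ whose cells cover $\mathbb R^d$, so their Legendre transforms agree up to a constant, and since each must equal $\phi$ exactly (not just up to a constant, because the affine pieces $L(C)$ are prescribed) the function $u$ is pinned down.

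For the second statement I would invoke the variational characterization of semidiscrete optimal transport exactly as recalled in Section \ref{aleksintro} and in \cite[Thm. 2.3]{benamoufroese} or \cite[Thm. 3]{merigot}. Fix a finite $V'\subset V$ and set $\mu:=\sum_{p\in V'}|C(p)|\,\delta_p$ and $\nu:=\mathcal L^d\lfloor\bigcup_{p\in V'}C(p)$; these have equal total mass. The Kantorovich dual for the cost $|x-y|^2$ reduces, after the standard expansion $|x-p|^2=|x|^2-2\langle x,p\rangle+|p|^2$, to finding weights $(\psi_p)_{p\in V'}$ maximizing $\sum_p|C(p)|\psi_p+\int_{\cup C(p)}\min_p(|p|^2-\langle 2x,p\rangle-\psi_p)\,dx$, whose optimizer partitions the target into power cells of the $p$'s. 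Comparing with \eqref{vorocell}, the weighted Voronoi decomposition induced by the reciprocal $R$ (via Theorem \ref{liftvor}, part 2) is exactly such a power-cell partition, with each cell having the prescribed mass $|C(p)|$. Hence the map sending $x\in C(p)$ to $p$ — which is precisely $\nabla u$, i.e. the subdifferential selection of $u$ — is the optimal transport map from $\nu$ to $\mu$; uniqueness is Brenier's theorem, since $\nu$ is absolutely continuous.

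The main obstacle I anticipate is bookkeeping the two layers of translation/normalization ambiguity: the lifting $L$ is defined only up to an affine summand, the reciprocal $R$ only up to a translation, and the potential $u$ only up to an additive constant in the pure "same subdifferential" statement. The theorem claims $u$ is genuinely \emph{unique}, which is only true once one reads $\phi$ (equivalently $L$) as prescribed data rather than as an equivalence class; I would need to state carefully that $u=\phi^*$ is then forced, and that the identity $\phi(x)=\langle x,p\rangle-u(p)$ on $C(p)$ is what eliminates the constant. A secondary technical point is that $K$ is an infinite face-to-face decomposition, so the supremum in the Legendre transform must be checked to be finite and attained for every $p\in V$; this uses that $\phi$ is genuinely convex (not merely locally convex) — which is part 1 of Theorem \ref{liftvor} applied to the given convex lifting — and that $p=\nabla\phi$ on a full-dimensional cell, so $\phi(x)-\langle x,p\rangle$ is bounded below. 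Everything else is a direct assembly of Theorems \ref{reclift}, \ref{liftvor} and the cited optimal-transport duality, so no long computation is needed.
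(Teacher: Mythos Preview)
The paper does not actually supply its own proof of Theorem \ref{vorot}: the result is stated in the survey Section \ref{survey}, and the paragraph preceding it defers the argument to \cite[Thm.~2.3]{benamoufroese}, \cite[Thm.~3]{merigot}, and \cite[Thm.~2.1]{aurenhammer1987criterion}, remarking only that Aurenhammer's idea ``directly generalizes to the infinite case.'' Your proposal is therefore not competing against a proof in the paper but filling in one where the paper points to the literature. The route you outline --- set $u=\phi^*$ via Legendre--Fenchel duality, identify $\nabla\phi|_{C(p)}=p$ through the reciprocal correspondence of Theorem \ref{reclift}, read off $\partial u(p)=C(p)$ by conjugate subdifferential calculus, then invoke the power-diagram characterization of semidiscrete optimizers and Brenier's theorem --- is precisely the standard argument in those references, and it is correct.

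One slip to fix: in the closing sentence of your part~2 you write ``the map sending $x\in C(p)$ to $p$ --- which is precisely $\nabla u$.'' That map is $\nabla\phi$, not $\nabla u$; the subdifferential $\partial u$ runs the other way, $p\mapsto C(p)$. What you have actually established is that $\nabla\phi$ is the Brenier map from $\nu$ to $\mu$, and the theorem's claim that $\partial u$ transports $\mu$ to $\nu$ is the dual statement that the optimal plan is supported on the graph of $\partial u=(\nabla\phi)^{-1}$. This is a labeling issue, not a gap. Your remarks on the normalization ambiguities (affine summand in $L$, translation in $R$, additive constant in $u$) and on the finiteness of $\phi^*$ for an infinite decomposition $K$ are well placed and are exactly what one would need to spell out to upgrade the sketch to a full proof.
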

We mention the following Optimal Transpont terminology, to facilitate the comparison to \cite{benamoufroese}: 
\begin{itemize}\item $\phi$ as in Theorem \ref{vorot} is known as {\bf Pogorelov solution} to the optimal transport problem mentioned in point 2 of the theorem.
\item $u$ as in Theorem \ref{vorot} is known as the {\bf Aleksandrov solution} to the same optimal transport problem.
\end{itemize}

\subsection{Discrete isoperimetric inequality and discussion of equality cases}\label{sec53}
We now formulate sufficient conditions for the existence of the optimal isoperimetric shape. These conditions will be further discussed and accompanied by several examples in Section \ref{s6}. Note that only the case $g=1$ will be explored in details, however the result for general $g$ may be of interest and requires no essential extra effort.
\begin{theorem}\label{specialiso}
Let $V\subset\mathbb R^d$ be a discrete set and $A:V\times V\to [0, +\infty)$ be a symmetric nonnegative weight. Let $G$ be the graph with vertices $V$ and edges $E_A:=\{(x,y):\ A(x,y)\neq 0\}$. Assume that condition \eqref{nbdcond} holds at each $x\in V$ and that $G$ is locally finite. Let $\Omega\subset V$ be a finite subset and let $\overline\Omega$ be the closure of $\Omega$ in $G$. Let $g:E_A\to[0,+\infty)$ be a second weight, and set $\sharp_g\overrightarrow{\partial \Omega}:=\sum_{(x,y)\in\overrightarrow{\partial\Omega}}g(x,y)$, where $\sharp_g\overrightarrow{\partial \Omega}=\sharp \overrightarrow{\partial\Omega}$ for $g\equiv 1$.

\medskip

\noindent Let $u:\overline{\Omega}\to \mathbb R$ be a solution, in the sense \eqref{newbdcond}, of
\begin{equation}\label{ubdp}
 \left\{\begin{array}{ll}
  \Delta_Au(x)\le \frac{\sharp_g\overrightarrow{\partial\Omega}}{\sharp \Omega}&\text{ if }x\in\Omega,\\[3mm]
  \partial_\nu u(x)=\frac{g(x,y)}{A(x,y)}&\text{ if }(x,y)\in\overrightarrow{\partial\Omega}.
 \end{array}\right.
\end{equation}
Then we have, with notations \eqref{subdopt1} and \eqref{vx}, 
\begin{equation}\label{ineqchain1}
 \left|\partial u(\Omega)\right|\le \sum_{x\in\Omega}|\partial^{\mathrm{prox}}u(x)| \le \sum_{x\in\Omega}C_{\mathcal V_x}\left(\Delta_Au(x)\right)^d \le \left(\max_{x\in\Omega} C_{\mathcal V_x}\right) \frac{(\sharp_g\overrightarrow{\partial\Omega})^d}{(\sharp\Omega)^{d-1}}.
\end{equation}
{\bf Necessary conditions for equality.} If equality holds in \eqref{ineqchain1} then all $C_{\mathcal V_x}, x\in\Omega$ are equal, $G|_{\overline\Omega}$ is the reciprocal of a face-to-face decomposition of $\partial u(\Omega)$ into convex polyhedra $\partial^{\mathrm{prox}}u(x),x\in\Omega$ of equal volume, and $u$ satisfies $\Delta_Au(x)=\sharp_g\overrightarrow{\partial\Omega}/\sharp\Omega$ in $\Omega$, $u(y)-u(x)=\frac{g(x,y)}{A(x,y)}$ for all $(x,y)\in\overrightarrow{\partial\Omega}$.
\medskip

\noindent {\bf Sufficient conditions for equality.} Let $\Omega\subset V$ be connected within $G$. Equality in the isoperimetric inequality in graph $G$ with weight $A$ is achieved by $\Omega$ if the following geometric conditions are met by $G,A$:
\begin{enumerate}
 \item The complex made of vertices and edges of $G|_{\Omega}\cup\overrightarrow{\partial\Omega}$ (note that vertices in $\overline{\Omega}\setminus\Omega$ are not included) is reciprocal to the collection of $d$- and $(d-1)$-cells of an equal-volume face-to-face polyhedral tessellation $K$ of a convex polyhedron $H$.
 \item If $F_{x,y}$ denotes the $(d-1)$-dimensional facet of the tessellation $K$ which is dual to edge $(x,y)$ of $G$, then the $\frac{A^2(x,y)|y-x|}{\mathcal H^{d-1}(F_{x,y})}$ takes the same value for all edges of $G$.
\end{enumerate}
 Under the above conditions, functions $u$ achieving equality in \eqref{ineqchain1} are precisely those of the form $u=\lambda u_{\mathrm{Alek}} + \ell$ where $\lambda>0$, $\ell$ is an affine function and $u_{\mathrm{Alek}}$ is the Aleksandrov solution to the optimal transport problem between $\frac{|H|}{\sharp\Omega}\sum_{x\in\Omega}\delta_x$ and $1_H(x)dx$ with cost $|x-y|^2$.
\end{theorem}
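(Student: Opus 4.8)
The plan is to run the chain \eqref{proofscheme} carefully, justifying each inequality using the lemmas already established, and then to analyze the equality cases via the convexity and reciprocal-graph machinery. First I would establish existence of a solution $u$ to \eqref{ubdp}: this is exactly Lemma \ref{lemexistsolineq} applied with $f\equiv \sharp_g\overrightarrow{\partial\Omega}/\sharp\Omega$, whose compatibility condition \eqref{necneum11} holds on every connected component by the divergence theorem for $\nabla_A$, since $\sum_{x\in\Omega}f(x)=\sharp_g\overrightarrow{\partial\Omega}=\sum_{(x,y)\in\overrightarrow{\partial\Omega}}g(x,y)$ (and, if $\Omega$ is disconnected, the same holds componentwise because the upper bound only needs to be met, not the equality). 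Then for the chain \eqref{ineqchain1}: step (a) $|H_g|\le|\partial u(\Omega)|$ is the argument reproduced after \eqref{proofscheme}, using the Hamamuki boundary interpretation \eqref{newbdcond}; I would write $\partial u(\Omega):=\bigcup_{x\in\Omega}\partial_\Omega u(x)$, so this union bounds $|H_g|$ from above. Step (b), that the $\partial_\Omega u(x)$ are essentially disjoint and hence $|\bigcup_x\partial u(x)|=\sum_x|\partial u(x)|$, is Lemma \ref{subdiffdisjoint}. Step (c), $|\partial_\Omega u(x)|\le|\partial^{\mathrm{prox}}u(x)|$, is immediate from Lemma \ref{lem:difincl} (the proximal condition tests against fewer points). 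Step (d) is Proposition \ref{equalitysubdiff}, inequality \eqref{eqsd}. Step (e) uses $\Delta_Au(x)\le\sharp_g\overrightarrow{\partial\Omega}/\sharp\Omega$ and the power-mean inequality $\sum_x a_x^d\le(\sharp\Omega)^{1-d}(\sum_x a_x)^d$ — but wait, that is the wrong direction; instead one simply bounds each $\Delta_Au(x)^d$ by $(\sharp_g\overrightarrow{\partial\Omega}/\sharp\Omega)^d$ and sums $\sharp\Omega$ terms, which gives the stated $(\max_x C_{\mathcal V_x})(\sharp_g\overrightarrow{\partial\Omega})^d/(\sharp\Omega)^{d-1}$.

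For the \textbf{necessary conditions for equality}, I would trace back through the chain. Equality in (e) forces $\Delta_Au(x)=\sharp_g\overrightarrow{\partial\Omega}/\sharp\Omega$ for every $x$ with $C_{\mathcal V_x}=\max$, but to have equality everywhere one needs $C_{\mathcal V_x}$ constant in $x$ and $\Delta_Au\equiv\sharp_g\overrightarrow{\partial\Omega}/\sharp\Omega$; hence all $\partial^{\mathrm{prox}}u(x)$ have equal volume $C_{\mathcal V}(\sharp_g\overrightarrow{\partial\Omega}/\sharp\Omega)^d$. Equality in (d) means, by Proposition \ref{equalitysubdiff}(c), that each $\partial^{\mathrm{prox}}u(x)$ is the Minkowski-optimal polyhedron for $\mathcal V_x$, with facets $F_{x,y}$ perpendicular to $y-x$ and $\mathcal H^{d-1}(F_{x,y})/(|y-x|A^2(x,y))$ constant; in particular this forces linear precision of $\Delta_A$ at each $x$. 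Equality in (c) gives $\partial^{\mathrm{prox}}u(x)=\partial_\Omega u(x)$ for all $x\in\Omega$, so by Proposition \ref{proxsubdiff} (using \eqref{nbdcond}) $u$ is the restriction of a convex function, whence (using the remark in step (a) of \eqref{proofscheme}) $\{\partial_\Omega u(x):x\in\Omega\}$ is a face-to-face decomposition of $\partial u(\Omega)$ into convex polyhedra, and its dual graph — edges $\{x,y\}$ being dual to the common facet $F_{x,y}$ — is precisely $G|_{\overline\Omega}$ by construction, since $F_{x,y}\perp(y-x)$. Equality in (a) and the boundary condition then read $H_g=\partial u(\Omega)$ and $u(y)-u(x)=g(x,y)/A(x,y)$ on $\overrightarrow{\partial\Omega}$. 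Collecting these gives the stated necessary conditions.

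For the \textbf{sufficient conditions}, I would reverse-engineer $u$ from the given geometry. Given the equal-volume face-to-face tessellation $K$ of the convex polyhedron $H$ whose reciprocal is $G|_\Omega\cup\overrightarrow{\partial\Omega}$, apply Theorem \ref{vorot} (with Theorems \ref{reclift}, \ref{liftvor}): there is a convex piecewise-affine lifting $\phi$ of $K$ (Pogorelov solution) and its Legendre transform $u_{\mathrm{Alek}}$ (Aleksandrov solution) for the transport problem between $\tfrac{|H|}{\sharp\Omega}\sum_{x\in\Omega}\delta_x$ and $1_H\,dx$, satisfying $\partial u_{\mathrm{Alek}}(x)=C(x)$ where $C(x)$ is the $d$-cell of $K$ dual to vertex $x$ of the reciprocal; and $|C(x)|=|H|/\sharp\Omega$ by the equal-volume assumption. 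I would then check that $u=\lambda u_{\mathrm{Alek}}+\ell$ solves \eqref{ubdp}: since $\partial u(x)=\lambda C(x)+p_\ell$, the proximal subdifferential $\partial^{\mathrm{prox}}u(x)$ equals $\partial_\Omega u(x)$ (because $\partial_\Omega u(x)=\lambda C(x)+p_\ell$ is a polyhedron with facet normals exactly the directions $y-x$ for the neighbors $y$ in $G$, by reciprocality, so by Lemma \ref{lem:difincl}'s converse the two subdifferentials coincide). The condition \eqref{alphavx}, i.e. $\mathcal H^{d-1}(F_{x,y}^{\mathrm{prox}})/(|y-x|A^2(x,y))$ constant over all edges, is exactly hypothesis (2) rescaled by $\lambda^{d-1}$; hence by Proposition \ref{equalitysubdiff}(a)$\Leftrightarrow$(c), equality holds in \eqref{eqsd} at every $x$. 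Since all cells $C(x)$ have equal volume, $\Delta_Au(x)$ is constant over $\Omega$; its value equals $\sharp_g\overrightarrow{\partial\Omega}/\sharp\Omega$ by the divergence theorem applied to $\nabla_Au$ together with the boundary matching (which one reads off from the fact that the facets of $H$ are dual to the boundary edges $\overrightarrow{\partial\Omega}$, so that $u(y)-u(x)=g(x,y)/A(x,y)$ can be arranged by the remaining freedom in $\ell$ — this is the point flagged in the Remark as not having a clean general formula). With these in place, all of (a)--(e) are equalities, so $\Omega$ realizes the isoperimetric equality, and uniqueness up to $u\mapsto\lambda u+\ell$ follows from the uniqueness clause of Proposition \ref{equalitysubdiff} together with the uniqueness-up-to-translation of the reciprocal in Theorem \ref{reclift}.

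\textbf{Main obstacle.} The delicate point is the sufficiency direction's matching of the Neumann boundary datum: one must verify that the Aleksandrov solution $u_{\mathrm{Alek}}$ supplied abstractly by Theorem \ref{vorot} actually has boundary increments $u(y)-u(x)$ along $\overrightarrow{\partial\Omega}$ equal to $g(x,y)/A(x,y)$ (up to the affine freedom), rather than merely being compatible in an averaged sense. The clean statement uses hypothesis (2) to pin down the facet areas of the interior cells, but the boundary facets of $H$ correspond to half-edges leaving $\Omega$, and relating their geometry to $g/A$ requires the compatibility identity \eqref{conststufintro} of Proposition \ref{corstrongintro} — which is precisely why the weaker Proposition \ref{corstrong} is singled out as the practical tool and why the Remark disclaims a general formula. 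I would handle this by imposing that the geometry of $H$'s boundary facets be dual to $\overrightarrow{\partial\Omega}$ with the analogue of \eqref{alphavx} extended to boundary edges, and otherwise treat it case-by-case as the Remark indicates.
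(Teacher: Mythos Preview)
Your proposal is correct and follows essentially the same route as the paper: the chain \eqref{ineqchain1} via Lemmas \ref{subdiffdisjoint}, \ref{lem:difincl} and Proposition \ref{equalitysubdiff}; necessity by tracing equality cases back through Proposition \ref{proxsubdiff} and Proposition \ref{equalitysubdiff}; and sufficiency by identifying the equality-achieving $u$ with (a rescaled) Aleksandrov solution whose proximal subdifferentials are the cells $C(x)$, with uniqueness coming from the connectedness of $\Omega$ and the uniqueness clause of Proposition \ref{equalitysubdiff}. Your ``main obstacle'' is slightly over-worried: in the paper's sufficiency statement the boundary datum $g$ is not prescribed in advance but is \emph{determined} by the geometry of $H$ (this is exactly what the Remark after Theorem \ref{specialisointro} says), and the well-posedness of the naive Neumann problem is secured by the convexity of $H$, which forces each $y\in\overline\Omega\setminus\Omega$ to have a unique neighbor in $\Omega$, so that Lemmas \ref{naiveneumbd} and \ref{naivegood} apply directly.
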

\begin{proof}\hfill

{\bf 1.} {\it Proof of \eqref{ineqchain1}.}
The inclusion $\partial u(x)\subset \partial^{\mathrm{prox}}u(x)$ follows because the latter is defined by a subset of the constraints of the former. This in turn implies the first inequality in \eqref{ineqchain1}. Proposition \ref{equalitysubdiff} gives the second inequality in \eqref{ineqchain1}. The last inequality in \eqref{ineqchain1} follows by estimating $C_{\mathcal V_x}$ by its maximum over $x\in\Omega$ and $\Delta_Au(x)$ by its upper bound imposed in \eqref{ubdp}.

\medskip

{\bf 2.} {\it Necessary conditions for equality in \eqref{ineqchain1}.}
If equality holds in the first inequality in \eqref{ineqchain1} then we must have that $\partial u(\Omega)$ si the disjoint union of the $\partial u(x),x\in\Omega$ and that for all $x\in\Omega$ there holds $\partial u(x)=\partial^{\mathrm{prox}}u(x)$. Therefore $\partial u(\Omega)$ is decomposed into the convex polyhedra $\partial^{\mathrm{prox}}u(x)$. Moreover, due to Proposition \ref{proxsubdiff}, we have that $u$ is the restriction of a convex function defined on $\mathbb R^d$ and it follows that $\Delta_Au(x)\ge 0$ for all $x\in \Omega$.

\medskip

If the last inequality in \eqref{ineqchain1} is an equality then $\Delta_Au(x)$ is constant over $x\in\Omega$ and $C_{\mathcal V_x}$ is independent of $x\in\Omega$. Moreover, due to the divergence theorem, equality in \eqref{ubdp} is achieved only if $u(y)-u(x)=\frac{g(x,y)}{A(x,y)}$ for all $(x,y)\in\overrightarrow{\partial\Omega}$. If the middle inequality in \eqref{ineqchain1} is an equality then for all $x\in \Omega$ we have $|\partial^{\mathrm{prox}}u(x)|=C_{\mathcal V_x} (\Delta_Au(x))^d$, and by the above, is independent of the choice of $x\in\Omega$.
\medskip

{\bf 3.} {\it Sufficient conditions for equality in \eqref{ineqchain1}.} Assume now that $G,A,H$ are as in the last part of the theorem, and denote by $\{C(x):\ x\in V\}$ the equal-volume tessellation of point (1) of the theorem statement. We start with two observations, which are useful later.

\medskip

Firstly, note that by Theorem \ref{optsd}, if $\mathcal H^{d-1}(F_{x,y})/A^2(x,y)|y-x|=\alpha$ are all equal, then the optimal constants $C_{\mathcal V_x}$ satisfy $C_{\mathcal V_x}=\frac{\alpha}d$ and thus are all equal for $x\in \Omega$ as well.

Secondly, let $u:\overline{\Omega}\to\mathbb R$ be a solution to $\Delta_A u(x) = \frac{\sharp_g\overrightarrow{\partial\Omega}}{\sharp \Omega}$ for $x\in\Omega$ and $u(y)-u(x)=\frac{g(x,y)}{A(x,y)}$ for $(x,y)\in\overrightarrow{\partial\Omega}$. Such $u$ exists because the complex $G|_{\Omega}\cup\overrightarrow{\partial\Omega}$ is reciprocal to the facets of dimension $d$ and $d-1$ of the tessellation $\{C(x):\ x\in \Omega\}$ of $H$, and thus the edges $(x,y)\in \overrightarrow{\partial\Omega}$ are orthogonal to faces of $H$, which is convex. In this case we cannot have an $y\in\overline{\Omega}\setminus\Omega$ participating to two different edges $(x,y),(x',y)\in\overrightarrow{\partial\Omega}$, and we can apply Lemmas \ref{naiveneumbd} and \ref{naivegood}. This uniquely determines $u$ up to summands $\ell$ such that $\Delta_A \ell=0$.
\medskip

We now show that a suitable rescaling of the Aleksandrov solution $u_{\mathrm{Alek}}$ described in the theorem is a solution as described in the above paragraph, for some function $g$ determined by $G,A$. Indeed, if $\partial^{\mathrm{prox}}u_{\mathrm{Alek}}(x)=C(x)$ for all $x\in\Omega$ then $u_{\mathrm{Alek}}$ satisfies \eqref{alphavx} of Proposition \ref{equalitysubdiff} at $x$, and therefore by the equivalence of (a), (b), (c) in that proposition, it satisfies the equality in the middle inequality in \eqref{ineqchain1} and also satisfies \eqref{condeqsd} from the proposition. Since $u_{\mathrm{Alek}}$ is convex we have $\Delta_Au_{\mathrm{Alek}}\ge 0$ and then \eqref{condeqsd} and the fact that $C_{\mathcal V_x}$ are constant imply that $\Delta_Au_{\mathrm{Alek}}$ is also constant on $\Omega$.

\medskip

We now show that any function $u:\oo\to\mathbb R$ that realizes the equality in \eqref{ineqchain1} must be $\lambda u_{\mathrm{Alek}}+\ell$ for some $\lambda>0$ and $\ell:\mathbb R^d\to\mathbb R$ affine. 

By the already proved necessary conditions for equality, we have $\Delta_A u(x)$ constant. Since furthermore $C_{\mathcal V_x}$ and $\frac{A^2(x,y)|y-x|}{\mathcal H^{d-1}(F_{x,y})}$ are constant for $x$ and $(x,y)$ in $G_{\overline{\Omega}}$, we have that after rescaling by a constant factor, $u$ satisfies the condition \eqref{condeqsd} for equality of Proposition \ref{equalitysubdiff} at every $x\in \Omega$, and thus $|\partial^{\mathrm{prox}}\lambda u(x)|=C_{\mathcal V_x}(\Delta_A \lambda u(x))^d$. By dividing by $\lambda^d$, the same equality also holds for $u$ itself. As a consequence, the middle equality in \eqref{ineqchain1} also holds for $u$. Proposition \ref{equalitysubdiff} also implies that $\Delta_A\ell=0$ for any affine $\ell$.

\medskip

\noindent Furthermore, by Proposition \ref{equalitysubdiff}, $u$ is now fixed up to affine summands and facets and facet directions of $\partial^{\mathrm{prox}} \lambda u(x)$ coincide with those of $C(x)$. Thus, $\partial^{\mathrm{prox}} \lambda u(x)$ is a translation of $C(x)$, and this holds for all $x\in\Omega$. Furthermore, a facet which is a translation of $F_{x,y}$ is shared between $\partial^{\mathrm{prox}}\lambda u(x)$ and $\partial^{\mathrm{prox}}\lambda u(y)$ for all $(x,y)$ edge of $G$. This implies that there exists a $t\in\mathbb R^d$ such that $\partial^{\mathrm{prox}}\lambda u(x)=C(x)+t$ for all $x\in\Omega$. It follows that after removing a suitable affine summand from $\lambda u$, we have 
\begin{equation}\label{proxsubduua}\partial^{\mathrm{prox}}\lambda u(x) = \partial u_{\mathrm{Alek}}(x)=\partial^{\mathrm{prox}}u_{\mathrm{Alek}}(x)\quad\text{at all }\quad x\in\Omega,
\end{equation}
where $u_{\mathrm{Alek}}$ is the Aleksandrov solution. The above second equality follows because $C(x)$ has facets $F_{x,y}$ in bijection with edges $(x,y)$ of $G$, and as a consequence only vectors $y-x$ with $(x,y)$ edge of $G$ need to be included as constraints while defining the subdifferential $\partial u_{\mathrm{Alek}}(x)$. 

\medskip

\noindent We claim that due to \eqref{proxsubduua} and to the fact that facets $F_{x,y}$ are in bijection with edges $(x,y)$ in $G$ and to the assumption that $\Omega$ is connected, it follows that $u, u_{\mathrm{Alek}}$ are equal up to a constant summand. Indeed, start at a vertex $x_0$ and add a constant to $u$ so that $u(x_0)=u_{\mathrm{Alek}}(x_0)$. Then following the edges from $x_0$ to another vertex $x\in\Omega$, we find that $u$ and $u_{\mathrm{Alek}}$ must have equal jumps along each edge due to \eqref{proxsubduua} and the claim follows.

\end{proof}

Note that the left-hand side of \eqref{ineqchain1} depends on $\Omega$ through the discrete Neumann boundary value problem \eqref{ubdp}, a somewhat indirect dependence. As mentioned in the introduction, a way to obtain a more explicit description of the equality cases for a restricted class of examples, is to complete \eqref{ineqchain1} on the right by a further inequality, coming from the inclusion
\begin{equation}\label{ineqsubdiff}
H_g:= \left\{p\in\mathbb R^d:\ \forall (x,y)\in\overrightarrow{\partial\Omega},\ p\cdot (y-x)\le\frac{g(x,y)}{A(x,y)}\right\}\quad \subseteq\quad \partial u(\Omega).
\end{equation}
The fact that this holds for solutions of \eqref{ubdp} due to the maximum principle is explained in the introduction, see the proof of inequality (a) from \eqref{proofscheme}. Assuming that $\partial u(\Omega)$ is convex, a necessary and sufficient condition for equality in \eqref{ineqsubdiff} is that 
\begin{equation}\label{neccondineq2} 
\left.\begin{array}{cc}(x,y), (x',y')\in\overrightarrow{\partial\Omega}\\[3mm]\frac{y-x}{|y-x|}=\frac{y'-x'}{|y'-x'|}\end{array}\right\}\quad \Rightarrow\quad\frac{A(x,y)|y-x|}{g(x,y)}=\frac{A(x',y')|y'-x'|}{g(x',y')}.
\end{equation}
 The reasoning concerning \eqref{proofscheme} together with the above and the sufficiency condition in Theorem \ref{specialiso} now give the following.
\begin{proposition}\label{corstrong}
 Let $V,G,A,g$ be as in in the beginning of Theorem \ref{specialiso}. Assume that the following further properties hold:
 \begin{enumerate}
  \item $G$ reciprocal to an equal-volume triangulation of $\mathbb R^d$.
  \item There exist constants $C_1, C_2>0$ such that 
  \begin{equation}\label{conststuf}
  \frac{\mathcal H^{d-1}(F_{x,y})}{|x-y|A^2(x,y)}=C_1\quad \text{and} \quad \frac{\mathcal H^{d-1}(F_{x,y})|x-y|}{g^2(x,y)}=C_2
  \end{equation}
 for all edges $(x,y)$ of $G$ and all corresponding dual $(d-1)$-facets of the triangulation.
 \end{enumerate}
Then there exists $C_{G,A}>0$ such that with notations \eqref{subdopt1} and \eqref{vx}, 
 \[
 C_{\mathcal V_x}=C_{G,A}\quad\text{for all}\quad x\in V.
 \]
Furthermore, for each finite subset $X\subset V$, denoting $H_{X,A}:=\left\{p\in\mathbb R^d:\ \forall (x,y)\in\overrightarrow{\partial X},\ A(x,y) (y-x) \cdot p\le g(x,y)\right\}$, there holds
\begin{equation}\label{isopineq}
(\sharp X)^{d-1}\ \le\ \frac{C_{G,A}}{ \left|H_{X,A}\right|} (\sharp_g\overrightarrow{\partial X})^d,
\end{equation}
with equality if and only if the cells dual to points in $X$ tessellate a dilation of $H_{X,A}$.

\medskip

In particular, if we take
 \begin{equation}\label{hlattice}
 H_g:=\{p\in \mathbb R^d:\ \forall x,y\in V \text{ such that } A(x,y)\neq 0,\ A(x,y)(y-x) \cdot p\le g(x,y)\},
 \end{equation}
then for all $X\subset V$ we have 
\begin{equation}\label{isopineqall}
(\sharp X)^{d-1}\ \le\ \frac{C_{G,A}}{|H|}\ (\sharp_g\overrightarrow{\partial X})^d,
\end{equation}
with equality if and only if a multiple of $H$ is tessellated by simplices dual to points in $X$.
\end{proposition}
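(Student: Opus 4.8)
The plan is to reduce the statement to Theorem \ref{specialiso} in two moves: first show that under hypotheses (1)--(2) all the cell constants $C_{\mathcal V_x}$ collapse to a single number $C_{G,A}$, and then, for each finite $X\subset V$, extend the inequality chain \eqref{ineqchain1} of Theorem \ref{specialiso} on the left by the maximum-principle inclusion $H_{X,A}\subseteq\partial u(X)$ from \eqref{ineqsubdiff}; reading off which links can be equalities will give the equality characterization, and \eqref{isopineqall} will follow because the set $H_g$ of \eqref{hlattice} is an intersection of more halfspaces than $H_{X,A}$, so $|H_g|\le|H_{X,A}|$.

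\emph{Constancy of $C_{\mathcal V_x}$.} Fix $x\in V$ and let $C(x)$ be the simplex of the triangulation dual to the $G$-vertex $x$. Since $G$ is a convex reciprocal, the facet $F_{x,y}$ of $C(x)$ dual to an edge $(x,y)$ has outward unit normal $(y-x)/|y-x|$, and by \eqref{conststuf} its area is $C_1|y-x|A^2(x,y)$. First I would apply the divergence theorem to $C(x)$, giving $\sum_{y\sim x}\mathcal H^{d-1}(F_{x,y})(y-x)/|y-x|=0$, which after substituting the area formula reads $\sum_{v\in\mathcal V_x}v=0$; together with $\mathrm{Span}\{y-x:\ y\sim x\}=\mathbb R^d$ (valid because $C(x)$ is a bounded polytope), Lemma \ref{existcond} then gives a maximizer $\mathcal R_{\mathcal V_x}$ of \eqref{subdopt1}, which by Theorem \ref{optsd} has facet normals $v/|v|$, facet areas $\alpha_{\mathcal V_x}|v|$, and $C_{\mathcal V_x}=|\mathcal R_{\mathcal V_x}|=\alpha_{\mathcal V_x}/d$. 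The key observation is then that $C(x)$ and the dilate $t_x\mathcal R_{\mathcal V_x}$ with $t_x=(C_1/\alpha_{\mathcal V_x})^{1/(d-1)}$ have identical facet normals and facet areas, so by the uniqueness part of Minkowski's theorem \ref{mink} they coincide up to a translation; comparing volumes gives $|C(x)|=t_x^d(\alpha_{\mathcal V_x}/d)=t_xC_1/d$. As the triangulation is equal-volume, $|C(x)|$ equals a constant $V_0$, hence $t_x=dV_0/C_1$ is independent of $x$, and therefore $C_{\mathcal V_x}=\alpha_{\mathcal V_x}/d=C_1/(d\,t_x^{d-1})$ is the same for every $x$; call it $C_{G,A}$. (In passing this shows each $\mathcal V_x$ positively spans $\mathbb R^d$, a fact used below for boundedness.)

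\emph{The inequality and its equality cases.} Given finite $X\subset V$, I would first produce a solution $u$ of \eqref{ubdp} in the sense \eqref{newbdcond} via Lemma \ref{lemexistsolineq} (its compatibility condition holds because the prescribed Laplacian upper bound is the constant $\sharp_g\overrightarrow{\partial X}/\sharp X$), and then invoke Theorem \ref{specialiso} to get \eqref{ineqchain1}, whose right-hand side equals $C_{G,A}(\sharp_g\overrightarrow{\partial X})^d/(\sharp X)^{d-1}$ by the previous paragraph. Prepending $H_{X,A}\subseteq\partial u(X)$ from \eqref{ineqsubdiff} yields \eqref{isopineq}; here $H_{X,A}$ lies inside the bounded set $\bigcup_{x\in X}\partial^{\mathrm{prox}}u(x)$, and the statement is read with the convention that it is vacuous when $|H_{X,A}|=0$. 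For equality I would observe that dividing the two identities \eqref{conststuf} gives $|y-x|A(x,y)/g(x,y)=\sqrt{C_1/C_2}$ for every edge, so \eqref{neccondineq2} holds; hence equality in \eqref{isopineq} forces equality in every link of \eqref{ineqchain1} and in \eqref{ineqsubdiff}. By Proposition \ref{proxsubdiff} this makes $u$ convex, so $\partial u(X)=H_{X,A}$, and by the necessary conditions of Theorem \ref{specialiso} the cells $\partial^{\mathrm{prox}}u(x)=\partial u(x)$, $x\in X$, decompose $H_{X,A}$ face-to-face into pieces of equal volume; Proposition \ref{equalitysubdiff} together with the uniqueness in Theorem \ref{optsd} identifies each such piece with a fixed dilate of $C(x)$, i.e.\ the simplices $\{C(x):x\in X\}$ tile a dilate of $H_{X,A}$. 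The converse is precisely the sufficiency half of Theorem \ref{specialiso} applied to that tiling. Finally, $H_g\subseteq H_{X,A}$ gives $|H_g|\le|H_{X,A}|$, so \eqref{isopineq} implies \eqref{isopineqall}; equality in \eqref{isopineqall} is equivalent to equality in \eqref{isopineq} together with $H_g=H_{X,A}$, which (the boundary facets of the tiling being exactly the facets of the dilated $H_g$, using \eqref{neccondineq2}) amounts to the simplices dual to the points of $X$ tiling a dilate of $H_g$.

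\emph{Main obstacle.} I expect the only real difficulty to be the constancy of $C_{\mathcal V_x}$: matching the cell-optimization problem \eqref{subdopt1} with the reciprocal/triangulation geometry requires recognizing $C(x)$ as the Minkowski optimizer up to a dilation, via the uniqueness part of Minkowski's theorem, and the equal-volume hypothesis is what forces this dilation factor to be uniform in $x$. Once this is established, the rest is a matter of chaining \eqref{ineqsubdiff} with the inequality of Theorem \ref{specialiso} and transcribing the equality analysis already recorded after that theorem.
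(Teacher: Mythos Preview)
Your proposal is correct and follows essentially the same route as the paper: establish the constancy of $C_{\mathcal V_x}$ from the Minkowski-type characterization in Theorem \ref{optsd} together with the equal-volume hypothesis, then chain \eqref{ineqsubdiff} with \eqref{ineqchain1} and read off the equality cases via Theorem \ref{specialiso} and Proposition \ref{equalitysubdiff}. The only stylistic difference is that the paper packages the constancy step inside its ``Important observation'' that simplices with the same facet normals are dilates of each other (exploiting that the dual cells are simplices), whereas you invoke the full Minkowski uniqueness theorem; your version is slightly more explicit about why the equal-volume hypothesis forces the dilation factor, and hence $C_{\mathcal V_x}$, to be uniform.
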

Equation \eqref{isopineq} transforms into an inequality which contemporarily holds for all sets $X$, only if we take the supremum on $X$ over the constants in the right hand side. This corresponds to verifying \eqref{isopineqall} for the case of $H$ which is the intersection of all possible $H_{X,A}$. Note that equality in \eqref{isopineqall} is in general hard to achieve, however we find several specific examples in Section \ref{s6}.

\begin{proof}[Proof of Proposition \ref{corstrong}:]
Theorem \ref{specialiso} proves the ``only if'' part of the implications claimed in the theorem, thus we focus on proving only the ``if'' part, i.e. the sufficiency of conditions \eqref{isopineq} and respectively \eqref{isopineqall}, for optimality. We restrict to the latter only, as the proof is similar for the two cases.

\medskip

{\bf Important observation.} For each $x\in V$ the optimizer to $C_{\mathcal V_x}$ is a simplex with facets orthogonal to vectors in $\mathcal V_x$, and crucially, all $\partial^{\mathrm{prox}}u(x)$ is then a dilated and translated copy of this optimizer. 
 
 \medskip
 
Assume that $\Omega$ has dual cells tessellating a multiple of $H$. Due to the above highlighted observation, we can proceed in the opposite way than for the proof of sufficiency in Theorem \ref{specialiso}, and consider a solution $u$ to the boundary value problem $\Delta_Au(x)=\frac{\sharp_g \overrightarrow{\partial\Omega}}{\sharp\Omega}$ with $u(y)-u(x)=\frac{A(x,y)}{g(x,y)}$ for all $(x,y)\in\overrightarrow{\partial\Omega}$. The convexity of $H_g$ implies that in fact this na\"ive version of the discrete Neumann boundary condition coincides with the Hamamuki one in this case, similarly to the sufficiency proof in Theorem \ref{specialiso}.

\medskip

From our highlighted observation, 
\begin{equation}\label{facetquotient}
\exists c>0\quad \text{ such that }\forall x\in V,\ \forall y\sim x\qquad \frac{\mathcal H^{d-1}(F_{x,y})}{\mathcal H^{d-1}(F^{\mathrm{prox}}_{x,y})}=c_x.
\end{equation}
Our observation gives the existence of $c_x$, possibly dependent on $x\in V$, for which \eqref{facetquotient} holds. However, since $G$ is connected, and since \eqref{facetquotient} would give $c_x=c_y$ if $x,y$ form an edge in $G$, it follows that such $c_x$ actually is independent of $x$, proving \eqref{facetquotient} as stated.

\medskip

Like in the proof of the sufficiency in Theorem \ref{specialiso}, we find that $C_{\mathcal V_x}$ are all equal at all $x\in V$. Now, using the fact that also $\frac{H^{d-1}(F_{x,y})}{A^2(x,y)|y-x|}$ and $\Delta u_A(x)$ are constant, it follows from \eqref{facetquotient} that condition \eqref{condeqsd} is satisfied by $u$ at each $x\in\Omega$. Thus the last two inequalities in \eqref{ineqchain1} becomes an equality for our choice of $u$.

\medskip

We then proceed as in the last part of Theorem \ref{specialiso} to prove that $u=\lambda u_{\mathrm{Alek}} + \ell$, where we may subtract $\ell$ since again $\Delta_A\ell=0$ for affine $\ell$. This implies that a $\frac{1}{\lambda} H_g$ is tessellated by $\partial^{\mathrm{prox}}u(x)$ for $x\in\Omega$, completing the proof.
\end{proof}

\section{Isoperimetric inequalities in periodic graphs}\label{s6}

In this section we will present a few examples, in which the hypotheses of Proposition \ref{corstrong} are satisfied and in which equality in \eqref{isopineqall} can be achieved and characterized. We do not aim at exhausting all possibilities, but rather at giving a hint at some possible applications. A more thorough study and classification is left to future work.

\medskip

For each case we will present $V,A,g$, the associated constant $C_{G,A} $ from Proposition \ref{corstrong} point (3), the region $H$ as in \eqref{hlattice} and its volume.

\medskip

In some cases, obstructions to the achievability of equality in \eqref{isopineqall} appear, while a discrete isoperimetric shape indeed exists nevertheless, leaving open possible directions of improvement of the theory. This suggests that alternative criteria alternative to Theorem \ref{specialiso} and Proposition \ref{corstrong} may exist. However such improvements go beyond the use of our PDE/Semidiscrete Optimal Transport approach, and thus we do not pursue them here.

We also restrict to periodic graphs only, and do not explore the whole range of possibilities left open by Theorem \ref{specialiso} and Proposition \ref{corstrong}. 

\medskip

We emphasize that even only restricting to the periodic case, the range of possible examples is very large, including the previous work \cite{hamamuki} as a very special case. The whole classification of the periodic graphs in which Proposition \ref{corstrong} applies to yield sharp isoperimetric inequality in \eqref{isopineqall} goes beyond the scope of this work, and leads to open questions of independent interest, of which we mention a few.

\medskip

Besides a range of specific examples, we present three general methods for producing new cases of applicability of Proposition \ref{corstrong} from known ones, by either taking products, enriching the graph by subdividing the dual polyhedral tessellation, or stacking periodic graphs along new coordinate directions. 
\subsection{Examples based on tilings of $\mathbb R^2$ by triangles or parallelograms}
\subsubsection{Tiling by equilateral triangles}\label{trix}

Consider the triangular lattice in $\mathbb{R}^2$ given by
\[
 \Lambda:=(1,0)\mathbb Z+\left(\tfrac12,\tfrac{\sqrt3}2\right)\mathbb Z:=\left\{x\in\mathbb R^2:\ \exists m,n\in \mathbb Z, \ x=m(1,0)+n\left(\tfrac12,\tfrac{\sqrt3}{2}\right)\right\}.
\]
We let $K$ be the tessellation of $\mathbb R^2$ formed by the equilateral triangles of sidelength $1$ with vertices in $\Lambda$. To define a rectilinear graph reciprocal to $K$, take vertex set $V$ equal to the set of barycenters of all tiles from $K$, and as edges $[x,y]$ the segments joining $x\neq y \in V$ which correspond to tiles in $K$ having a face in common. The same $G$ can be described as the $1$-skeleton of the Voronoi diagram generated by $\Lambda$, and we name it the {\bf honeycomb graph}. We now take $A,g$ symmetric and such that $A(x,y)=g(x,y)=1$ for all $[x,y]$ edge of $G$, $A(x,y)=g(x,y)=0$ for other pairs $(x,y)\in V\times V$. 

\medskip

The hypotheses of Proposition \ref{corstrong} can be easily checked because the faces $F_{x,y}$ of tiles of $K$ all have length $1$ ad the lengths of edges of $G$ are all equal to $\sqrt3/3$. In particular, note that for each $x\in V$, the set $\mathcal V_x=\{(y-x)A^2(x,y):\ A(x,y)\neq 0\}$ as in \eqref{vx} is formed by three segments of length $\sqrt3/3$ and forming equal angles of $\frac23\pi$ with each other, and thus the optimal shape in \eqref{subdopt1} for $\mathcal V=\mathcal V_x$ is an equilateral triangle of sidelength $2$. Thus $C_{\mathcal V_x}=C_{A,G}=\sqrt 3$. The set $H$ from \eqref{hlattice} is a regular hexagon of sidelength $2$, and thus of area $|H|=6\sqrt 3$. Therefore \eqref{isopineqall} gives the following {\bf isoperimetric inequality in the honeycomb graph} $G$
\[
\forall X\subset V,\quad (\sharp X)^2\le 6\ \sharp\overrightarrow{\partial X}.
\]
As a dilated copy of $H$ can be tessellated by tiles from $K$, we have that the above inequality is sharp and is achieved precisely by $X$ of cardinality $6k^2, k\in\mathbb N$, and whose dual cells in $K$ tile an equilateral hexagon.

\subsubsection{Deformations of the honeycomb graph example}
The combinatorial inequality obtained in the previous subsection begs the question of whether our method can be applied to other combinatorially equivalent rectilinear graphs with different side lenghts and weights. We verify that this is the case for any affine deformation of the corresponding graph $G$. This verification is relatively simplified by the fact that in this case we have only three different edge lengths $\ell_1,\ell_2,\ell_3$, corresponding to opposite sides of the honeycomb hexagonal cells. At each vertex of $G$ three edges with different edge lengths meet, and the reciprocal tessellation is by isometric triangles. The conditions of Proposition \ref{corstrong} are verified and the shape $H$ is an affine deformation of (but is combinatorially equivalent to) the one from the honeycomb graph, as expected. 

\medskip

Another option for creating new examples is to modify the weights $g, A$ for one of the above examples. The outcome of this operation is to change the ratios of sidelengths of the shape $H$, without modifying their orientations. The hypotheses of Proposition \ref{corstrong} are then still verified, but combinatorially the isoperimetric shapes will change.
\subsubsection{An interesting problematic case in the plane}
Isoperimetric shapes in graphs may exist and may have a form more complicated than \eqref{isopineqall}, in which case the framework of Proposition \ref{corstrong} does not help for their classification.

\medskip

This is the case of the triangular lattice graph, i.e. the $1$-skeleton of the tiling from Section \ref{trix}. In this case the optimal shape realizing the constant $C_{\mathcal V_x}$ at each $x$ in the graph, can be found to be again a regular hexagon, however if we try for example to find the shape in \eqref{hlattice}, it is once more a hexagon for this case. Since $H$ cannot be tessellated by hexagons, this is a first hint that an inequality of the form \ref{isopineqall} cannot hold in this graph.

\medskip

By using the duality between the triangular graph and the honeycomb graph from Section \ref{trix} and using basic topological invariants, we reach the conclusion that the regular hexagon shapes satisfy an isoperimetric-type inequality, see \eqref{mainthm}, whose proof is laid out in Section \ref{trigraph}. Investigating the structure of graph isoperimetric inequalities such as \eqref{mainthm} seems to be an interesting future research direction. As indicated by the proof in the simple case of the triangular graph, this may involve nontrivial new tools outside the scope of the present work.

\subsection{Examples based on tilings of $\mathbb R^3$ by tetrahedra or parallelotopes}

\subsubsection{Body-centered cubic lattice (BCC)}\label{secbcc}

We consider the so-called BCC lattice defined as follows (with the factor of $2$ included for simplicity of notation)
\[
\Lambda:= \{(1, 1, 1), (0,0,0)\}+2\mathbb Z^3.
\]
We consider the tessellation $K$ given as the Delone triangulation corresponding to this lattice. Recal that the Delone triangulation corresponding to a point configuration is the decomposition of space into simplices with vertices in the configuration such that the circunscribed spheres are not containing any points of the configuration in their interiors (see e.g. \cite{emp} for more details). As a rectilinear graph $G$ reciprocal to the tessellation $K$ is the $1$-skeleton of the Voronoi diagram corresponding to $\Lambda$, whose cells are all congruent to the truncated octahedron (also known as a permutahedron) with vertices 
\[
 \left\{\left(0,\pm\frac12,\pm 1\right)\text{ and permutations thereof}\right\}.
\]
We take $A(x,y)=g(x,y)=1$ for all sides of $G$, and we verify the hypotheses of Proposition \ref{corstrong} as follows:

\begin{itemize}
 \item The cells of $K$ are all cogruent to a so-called disphenoid tetrahedron, such as the one with vertices $(0,0,\pm 1), (1, \pm 1,0)$.
 \item As $A$ is constant and the sides of $G$ all have equal lengths, the optimizers in the definition of $C_{\mathcal V_x}$ all are equal disphenoid tetrahedra, and we have $C_{\mathcal V_x}=C_{G,A}=\frac1{12}$.
 \item The shape $H$ as in \eqref{hlattice} is given by a rhombic dodecahedron, which can be tessellated by $24k^3$ congruent disphenoid tetrahedra from rescalings of the tessellation $K$, for $k\in\mathbb N$. Furthermore, we have $|H|=2$.
\end{itemize}
Therefore, in this case \eqref{isopineqall} reads 
\[
 (\sharp X)^2 \le \frac{1}{24}(\sharp\overrightarrow{\partial X})^3, 
\]
and is achieved for $\sharp X=24k^3$, in case the cells in $K$ dual to vertices in $X$ tessellate a rhombic dodecahedron which has the same shape as $H$.

\subsubsection{Tessellations with vertex set equal to the FCC lattice}

As an indication of the existence of less symmetric examples than the above, we try to replicate the construction of Section \ref{secbcc} with the face-centered cubic (FCC) lattice defined as follows:
\[
 \Lambda:=\{(0,0,0),(1,1,0), (1,0,1), (0,1,1)\}+2\mathbb Z^3.
\]
The Delaunay tessellation of the above $\Lambda$ is usually given as a tessellation by regular tetrahedra and octahedra, of which the octahedra therefore have volume $4$ times the one of the tetrahedra. We thus cannot consider the reciprocal graph of this tessellation, which does not satisfy the hypotheses of Proposition \ref{corstrong}. Instead, we will divide each octahedron in $4$ equal tetrahedra along the coordinate planes with normals $e_1,e_2$ (note that all octahedra are translates of $\{(\pm1,\pm1,\pm1)\}$). The corresponding tessellation $K$ of $\mathbb R^3$ can be equivalently obtained as the complement of the plane arrangement
\begin{equation}\label{hyparrfcc}
 \{\pi_v(c):\ v\in\mathcal V, c\in \mathbb Z\},\quad \text{where}\quad\left\{
 \begin{array}{l}
 \mathcal V:=\left\{\left(\frac12,0,0\right), \left(0,\frac12,0\right), \left(\frac12,\pm\frac12,\pm\frac12\right)\right\},\\[3mm]
 \pi_v(c):=\{x\in\mathbb R^d:\ \langle v, x\rangle =c\}.
 \end{array}\right.
\end{equation}
To fix the reciprocal graph $G$ to the tessellation $K$ and the weights $A,g$ so that the hypotheses of Proposition \ref{corstrong} hold, note first that there are two types of cells in $K$:
\begin{itemize}
\item Type $1$ cells, congruent to regular tetrahedra of sidelength $\sqrt2$,\item Type $2$ cells, congruent to tetrahedra which are a quarter of an octahedron of sidelength $\sqrt2$.
\end{itemize}
 These cells have two types of facets: 
 \begin{itemize}
 \item Type $1$ facets: regular equilateral triangles with sidelength $\sqrt 2$,\item Type $2$ facets: isosceles right triangles with sidelengths $\sqrt 2, 2$.
 \end{itemize}
 Areas of facets of type $j$ are denoted $F_j$. Combinatorially, $G$ is the dual of $K$, and we require edges $[x,y]$ in $G$ to be normal directions of corresponding $2$-facets from $K$. Edges orthogonal to type $j$ facets have lengths denoted $\ell_j$. We will choose constants $A_j, g_j$ and set $A(x,y)=A_j, g(x,y)=g_j$ on edges of type $j$ for $j=1,2$. The optimal constants of the vertices dual to cells of type $j$ in $K$ will be denoted $C_{\mathcal V_j}$. Denoting $F_j:=\mathcal H^2(F_{x,y})$ for facets $F_{x,y}$ of type $j$ we have $F_1=\frac{\sqrt3}{2}, F_2=1$ and setting $A_2=g_2=\ell_2=1$, \eqref{conststuf} gives equations 
\[
 C_{\mathcal V_1}=C_{\mathcal V_2},\quad \ell_1 A_1^2=\frac{g_1^2}{\ell_1}=\frac{\sqrt3}{2},
\]
and we calculate $C_{\mathcal V_1}=C_{\mathcal V_2}=\frac{\sqrt3}{8}$, as the optimal shapes only depend on  $\ell_j A_j^2$ and on the angles between facets, all of which are already determined above. We then may fix $\ell_1,A_1,g_1$ satisfying the above arbitrarily, hich leaves one degree of freedom. 

\medskip

The shape $H$ as in \eqref{hlattice} is a convex set with facet normal vectors $\pm \mathcal V$ where $\mathcal V$ is as in \eqref{hyparrfcc}, and thus equals a regular octahedron truncated at two pairs of antipodal vertices, at equal distances from the center, where these distances depend on the ratio $A_1/g_1$. This figure can be tessellated by dilated cells of $K$ for special choices of $A_1/g_1$. We thus verify all conditions of Proposition \ref{corstrong} in this case.

\subsection{Higher-dimensional examples based on Coxeter triangulations} The examples in this section include the ones from the previous sections as special cases. A {\bf Coxeter triangulation} is a triangulation whose cell interiors are given by the complements of planes from a plane arrangement composed of families of hyperplanes perpendicular to vectors from a root system, whose cells are fundamental domains of an action by a finitely generated discrete subgroup $W$ of isometries of $\mathbb R^d$. For a classification and more properties of Coxeter triangulations, see \cite{coxtri} and \cite{humphreys}. 

\medskip

In this case we take $K$ to be the images of the fundamental cells under the action of $W$, $G$ to be the reciprocal graph of $K$ in which the edge $[x,y]$ reciprocal to a facet $F_{x,y}$ has lenght $\frac{1}{\mathcal H^{d-1}(F_{x,y})}$. Then we take $A(x,y)=\sqrt{\frac{\mathcal H^{d-1}(F_{x,y})}{|y-x|}}$. The shape $H$ from \ref{hlattice} is then tessellated by rescaled fundamental cells, and thus the hypotheses of Proposition \ref{corstrong} are satisfied.

\medskip

A complete classification of the case of graphs and tilings satisfying Proposition \ref{corstrong} coming from hyperplane arrangements seems within reach, but it goes beyond the scope of the present work and is left for future work.

\subsection{Building new examples from old ones}
\subsubsection{Orthogonal products}
Here we will discuss the posibility to form examples in the product space of two or more ambient spaces where there are cases of already known examples. 

\medskip

Suppose that for $j=1,2$ we have given tessellations $K_j$ of $\mathbb R^{d_j}$, rectilinear graphs $G_j$ in $\mathbb R^{d_j}$ dual to $K_j$ and positive symmetric weight functions $A_j, g_j$ such that the conditions of Proposition \ref{corstrong} hold. We denote the constant values of the discrete isoperimetric constants by $C_j:=C_{A_j,G_j}$ and the isoperimetric shapes by $H_j$, for $j=1,2$. 

\medskip

With the above conditions met, we let $d:=d_1+d_2$ and construct construct $K, G, A, g$ satisfying Proposition \ref{corstrong} in $\mathbb R^d$, as follows. 
\begin{itemize} \item The tessellation $K$ having cells $A_1\times A_2$, where $A_j$ is a cell of $K_j$ for $j=1,2$. 
 \item As a reciprocal to $K$ we may take the rectilinear graph $G$ with edges 
 \begin{equation}\label{segmgj}\{z_1\}\times[x_2,y_2]\quad\text{ and }\quad[x_1,y_1]\times\{z_2\},\quad \text{with $z_j$ vertices in $G_j$ and $[x_j,y_j]$ edges of $G_j$ for $j=1,2$.}
 \end{equation}

 \item The functions $A,g$ to take values $\sqrt{C_1} A_2(x_2,y_2), \sqrt{C_1} g_2(x_2,y_2)$ on edges from the left side of \eqref{segmgj} and value $\sqrt{C_2} A_1(x_1,y_1), \sqrt{C_1} g_1(x_1,y_1)$ on edges from the right side of \eqref{segmgj}.
\end{itemize}
With the above definitions we have $C_{\mathcal V_{(x_1,x_2)}}=\mathcal C_{V_{x_1}} \mathcal C_{V_{x_2}}=C_1C_2$ and the set $H$ defined in \eqref{hlattice} in this case is $H=H_1\times H_2$, which is tessellated by tiles in $K$ precisely if $H_j$ is tessellated by tiles in $K_j$ for $j=1,2$. Therefore the conditions of Proposition \ref{corstrong} hold and the isoperimetric constants and optimal shapes are obtained as a product of the ones with indices $j=1,2$.

\subsubsection{Cell subdivision}
If we have a Delaunay triangulation $K_0$ of $\mathbb R^d$ with equal volume simplicial cells and a reciprocal graph $G_0$, in which the conditions of Proposition \ref{corstrong} for obtaining \eqref{isopineqall} hold, then we may subdivide each cell into $d+1$ equal volume simplices by adding one new vertex in its interior. We then obtain a new triangulation $K$ and associate to it a reciprocal $G$. The subdivision $K$ is unique, and in $G$ each vertex of $G_0$ has been replaced by $d+1$ new vertices forming the $1$-skeleton of a simplex. The unit vectors of edges in $G$ are uniquely determined, and we may fix lengths of edges arbitrarily. We then have the freedom to fix $A(x,y), g(x,y)$ on each new edge, so that \eqref{conststuf} is still verified. 

\medskip

In this case the definiton \eqref{hlattice} defines a set which will in general {\it not} be tessellated by simplices of the new triangulation. However, dilations of the $H_g$ coming from the initial graph $G_0$ will be tessellable by simplices of $K$, and the proof that equality in \eqref{isopineqall} is achieved with this $H_g$ and for subsets of $G$ dual to these tessellations as for $G$, can be performed exactly like in Proposition \ref{corstrong}.

\subsubsection{New inequalities for associated graphs, via precise resummation using topological invariants: case study for the triangular graph}\label{trigraph}

The triangular graph $G$ is the graph with vertex set equal to the triangular lattice $\Lambda$ defined in Section \ref{trix}, and edges connecting nearest-neighbors in $\Lambda$. For simplicity, we choose $g\equiv 1, A\equiv 1$, although extension to the general case is possible and straightforward. 

\medskip

Note that in Proposition \ref{corstrong} the dual cells in $K$ are hexagons with edges orthogonal to the directions of nearest-neighbor vectors in $\Lambda$ and so is the set $H_g$ from \eqref{hlattice}. Therefore only for the trivial case $\sharp X=1$ could a dilated copy of $H_g$ be tessellated by cells of $K$. On the other hand, optimal isoperimetric sets at fixed $\sharp X$ can be found, and they approximate regular hexagons, being equal to hexagons for special values of $\sharp X$. 

\medskip

We show below (as is not surprising, see e.g. \cite{heitmannradin}, of which the final result found here is a consequence, or \cite{delfries} which also uses Gauss-Bonnet methods) that a sharp isoperimetric inequality which is better adapted to the problem can be found, in which all optimal hexagons satisfy the equality. The precise inequality can be found under the ansatz that $p(\sharp X) = q(\sharp \overrightarrow{\partial X})$ holds for each $X$ forming a regular hexagon configuration in $\Lambda$ and $p,q$ are polynomials of degrees $2,3$ respectively. However we find the same formula via a direct geometric approach, which we hope that can give a first example for a general class of constructions to be studied in future work. See the titles of the below paragraphs for the structure the reasoning.

\medskip 

{\bf Connectedness of optimizers.} We suppose that $\Omega$ is an isoperimetric set, for which $(\sharp\partial\Omega)^2/\sharp\Omega$ is minimal. Firstly, the below simple result implies that $\oo$ must be connected. The lemma is well known but we include a proof for completeness.

\begin{lemma}\label{lem:minconnected}
 Let $V$ be a lattice and assume that $B$ is such that $\pm B$ contains $0$ in the interior of its convex hull. Assume that $\oo\subset V$ is not connected and it has connected components $\oo_1,\ldots,\oo_n$ with interiors $\Omega_1,\ldots,\Omega_n$. For $k=1,\ldots,n$ there exist $v_k\in V$ such that the $\Omega_k+v_k$ are pairwise disjoint and $\Omega':=\bigcup_{k=1}^n(\Omega_k+v_k)$ is connected. 
 For any such $\Omega'$ there holds
 \begin{equation}\label{betterconnected}
  \sharp\Omega=\sharp\Omega',\quad \sharp \partial \Omega>\sharp \Omega'.
 \end{equation}
\end{lemma}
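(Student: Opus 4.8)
The plan is to build the connected rearrangement $\Omega'$ greedily, one component at a time, sliding each component by a lattice vector until it just touches the already-assembled block without overlapping it, and then to verify the two claims in \eqref{betterconnected}. First I would set up the induction: suppose $\oo_1,\dots,\oo_n$ are the connected components of $\oo$ (so $\oo_k$ is the closure of $\Omega_k$ in the graph). I assemble $\Omega'$ in $n$ stages, maintaining at stage $k$ a connected set $P_k$ which is a disjoint union of translates $\Omega_1, \Omega_2+v_2,\dots,\Omega_k+v_k$; set $P_1:=\Omega_1$, $v_1:=0$. Given $P_{k-1}$, I want to choose $v_k\in V$ so that $\Omega_k+v_k$ is disjoint from $P_{k-1}$ but \emph{adjacent} to it in the graph (i.e. some vertex of $\Omega_k+v_k$ is joined by an edge to some vertex of $P_{k-1}$). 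To produce such a $v_k$: both $P_{k-1}$ and $\Omega_k$ are finite, so for all but finitely many $w\in V$ the translate $\Omega_k+w$ is disjoint from $P_{k-1}$; pick any direction $u\in\mathcal V\cup(-\mathcal V)$ (recall $\mathcal V=B\cup(-B)$ has $0$ in the interior of its convex hull, so such nonzero $u$ exist) and start with $w$ a large multiple of $u$ so that $\Omega_k+w$ is far from $P_{k-1}$, then decrease the multiple step by step along $u$; at some point $\Omega_k+w$ first fails to be disjoint from $P_{k-1}$, and the previous step $v_k$ gives a translate that is disjoint from $P_{k-1}$ yet has a vertex at graph-distance one from $P_{k-1}$ (here I use that edges of $G$ along the lattice direction $u$ have bounded length, which holds because $G$ is a fixed periodic graph — this is the one place one must be slightly careful and may need the hypothesis that $V$ is a lattice and $G$ is $V$-periodic). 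Setting $P_k:=P_{k-1}\cup(\Omega_k+v_k)$ keeps $P_k$ connected, and $\Omega':=P_n$ is the desired connected rearrangement.

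Once $\Omega'$ is constructed, the cardinality identity $\sharp\Omega=\sharp\Omega'$ is immediate since $\Omega'$ is a disjoint union of translates of the $\Omega_k$ and translations preserve cardinality. For the boundary inequality, the key point is that the oriented edge-boundary only counts edges internal to the ambient graph with exactly one endpoint inside, and that translating a component by a lattice vector preserves its \emph{internal} edge structure and hence its own oriented boundary: $\sharp\overrightarrow{\partial(\Omega_k+v_k)}=\sharp\overrightarrow{\partial\Omega_k}$ by $V$-periodicity of $G$. Therefore
\[
\sharp\overrightarrow{\partial\Omega}=\sum_{k=1}^n\sharp\overrightarrow{\partial\Omega_k}=\sum_{k=1}^n\sharp\overrightarrow{\partial(\Omega_k+v_k)}.
\]
On the other hand, for the connected set $\Omega'=\bigsqcup_k(\Omega_k+v_k)$, an oriented boundary edge $(x,y)$ of $\Omega'$ has $x$ in some translated component $\Omega_k+v_k$ and $y\notin\Omega'$; in particular $y\notin\Omega_k+v_k$, so $(x,y)$ is an oriented boundary edge of that component. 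This gives the inclusion $\overrightarrow{\partial\Omega'}\subseteq\bigsqcup_k\overrightarrow{\partial(\Omega_k+v_k)}$, hence $\sharp\overrightarrow{\partial\Omega'}\le\sharp\overrightarrow{\partial\Omega}$; the inclusion is strict because at each of the $n-1$ joins created in the construction, at stage $k\ge 2$ there is at least one edge from $\Omega_k+v_k$ to $P_{k-1}\subset\Omega'$, and that edge lies in $\overrightarrow{\partial(\Omega_k+v_k)}$ but \emph{not} in $\overrightarrow{\partial\Omega'}$ since both its endpoints are in $\Omega'$. (For $n\ge 2$ this strictly drops the count by at least $n-1\ge 1$.) Thus $\sharp\overrightarrow{\partial\Omega'}<\sharp\overrightarrow{\partial\Omega}$, which is the claimed strict inequality in \eqref{betterconnected}.

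The main obstacle I anticipate is the gluing step: making rigorous that one can always slide a component by a lattice vector so that it becomes adjacent to, but not overlapping, the existing block. In a graph where edges only connect very close lattice points this is geometrically obvious, but it genuinely uses that $G$ is periodic under $V$ with finitely many edge-orbits (so edge lengths are bounded), and that $0$ lies in the interior of $\mathrm{conv}(\pm B)$ so that there is a lattice direction along which one can approach. If one wanted to avoid any appeal to bounded edge length, an alternative is to translate $\Omega_k$ so that it shares a vertex with the current block and then translate back by one edge-step; but then one must argue non-overlap, which is again where the geometry enters. I would present the sliding argument as above and flag that the only structural hypothesis used beyond finiteness is $V$-periodicity of $G$ together with the placement of $0$ inside $\mathrm{conv}(\pm B)$.
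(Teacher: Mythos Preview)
Your overall strategy---greedy induction, translating one component at a time until it becomes adjacent to the already-assembled block---is the same as the paper's, and your boundary-counting argument for the strict inequality is correct (and in fact more explicit than the paper's, which only says the remaining properties ``can be easily verified'').

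There is, however, a genuine gap in your sliding construction. Translating $\Omega_k$ by integer multiples of a single vector $u$ traces out a one-dimensional coset $\Omega_k+\mathbb Z u$, and there is no reason this coset should ever meet $P_{k-1}$ at all. Concretely, in $\mathbb Z^2$ with nearest-neighbor edges, take $P_{k-1}=\{(0,0)\}$, $\Omega_k=\{(0,1)\}$, and $u=(1,0)$: no multiple of $u$ makes the translate intersect $P_{k-1}$, so your ``first failure of disjointness'' never occurs. Your proposed alternative (translate so a vertex is shared, then back off by one edge-step) does not guarantee disjointness either, since both pieces may have many points.

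The fix is simple: reverse the sliding. First translate $\Omega_k$ by any $w_0\in P_{k-1}-\Omega_k$ so that the translate \emph{overlaps} $P_{k-1}$; this is always possible. Then slide by $u,2u,3u,\dots$ until the translate first becomes disjoint from $P_{k-1}$, say at step $m^*$. At step $m^*-1$ some $x\in\Omega_k$ satisfies $x+w_0+(m^*-1)u\in P_{k-1}$, and then $x+w_0+m^*u$ lies in the (now disjoint) translate and is a $u$-neighbor of that point in $P_{k-1}$. This gives exactly the adjacency you need. The paper achieves the same end by a different device: it picks an extreme point $w_1$ of $P_{k-1}$ in some direction $b$ and an extreme point $w_2$ of $\Omega_k$ in direction $-b$, then translates $\Omega_k$ by $w_1-w_2+b'$ for any edge vector $b'$ with $\langle b,b'\rangle>0$; extremality gives a separating hyperplane and hence disjointness, while $w_1$ and $w_1+b'$ are neighbors. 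Either route closes the gap.
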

\begin{proof}
We show the result by induction on $n\ge 2$. We first present the case of $\Omega$ having two connected components, $n=2$. Let $w_1$ be an extreme point of $\Omega_1$ in direction $b$ and $w_2$ be an extreme point of $\Omega_2$ in direction $-b$. Then there exists a direction $b'\in \pm B$ such that $\langle b,b'\rangle>0$, $w_1$ has no neighbor in $\Omega_1$ in the direction $b'$ and $w_2$ has no neighbor in $\Omega_2$ in direction $-b'$. The existence of such $b'$ follows from the assumption that $\pm B$ contains $0$ in the interior of its convex hull. We then find that if to $\Omega_1,\Omega_2$ we apply respectively translation vectors $v_1=0$ and $v_2=w_1-w_2+b'$, then the images of $w_1, w_2$ become neighbors, thus $\Omega'$ is connected. All the other properties required in the lemma can be easily verified. This concludes the discussion for $n=2$. For $n>2$ we apply the above procedure to $\bigcup_{k=1}^{n-1}\Omega_k$ and $\Omega_n$ instead, and we obtain translation $v_n$ that can be applied to $\Omega_n$ so that it becomes ``attached'' via an extreme point to at least one of the $\Omega_k,k=1,\ldots,n-1$. This produces a new set $\Omega'$ which has strictly less connected components, to which we can apply the inductive hypothesis, allowing to conclude.
\end{proof}

{\bf Passing information between the dual and primal graph.}
We assume that $\Omega$ is a union of triangles (for the definition see Section \label{sec:dugra}), that the vertices $\Omega$ with less than $6$ neighbors in $\Omega$ form a single cycle without repeated vertices, and that $\Omega^*$ is the graph corresponding to the faces of $\Omega$. We then use the following notations:
\begin{itemize}
 \item $X:=\sharp \Omega$, $Y=\sharp\overrightarrow{\partial\Omega}$, $X^*:=\sharp \Omega^*$ and $Y^*:=\sharp\overrightarrow{\partial\Omega^*}$.
 \item For $i\in\{1,\ldots,6\}$, $V_i$ are the vertices in $\Omega$ which have $i$ neighbors in $\Omega$, and $a_i:=\sharp V_i$.
\end{itemize}
Then note the following properties connecting the $a_i$ and $X,Y,X^*,Y^*$:
\begin{enumerate}
 \item Since $\Omega$ is a union of triangles, we have $a_5=0$.Then 
 \[
 X=a_1+a_2+a_3+a_4+a_6.
 \]
 \item Since the vertices in $C:=V_1\cup V_2\cup V_3\cup V_4$ form a single cycle without repeated vertices, it means that vertices from $V_i$ have $i+1$ neighbors in $\Omega$ for $i\in\{1,\ldots,4\}$ and vertices from $V_6$ have $6$ neighbors in $\Omega$.
 \item For $x\in \Omega$ the number of boundary edges it belongs to plus the number of neighbors of $x$ equal $6$.
 \item Due to the previous two points, 
 \[
 Y=4a_1+3a_2+2a_3+a_4.
 \]
 \item By the Gauss-Bonnet theorem, 
 \begin{equation}\label{gb}
 2a_1+a_2-a_4=6.
 \end{equation}
 \item As $C$ is a cycle without repeating vertices, it has equal number of vertices and  edges connecting them, and each such edge corresponds to exactly one triangle to the exterior of $\Omega$, thus to an element of $\overrightarrow{\partial\Omega^*}$. So we have:
 \begin{equation}\label{ystar}
 Y^*=a_1+a_2+a_3+a_4=X-a_6.
 \end{equation}
 \item Each half-edge in $E_\Omega$ belongs to one vertex. Thus point 3 gives
 \begin{equation}\label{sharplomega}
  2\sharp E_\Omega=6a_6+5a_4+4a_3+3a_2+2a_1=6X-Y.
 \end{equation}
 \item Consider the polygonal complex with vertices $\Omega$, edges $E_\Omega$ and faces given by the triangles contained in $\Omega$. By assumption this complex has only one boundary component, thus it has Euler characteristic $1$. Its faces are in bijection with $\Omega^*$, thus, using also \eqref{sharplomega},
 \begin{equation}\label{xstar}
 X+X^*-\sharp E_\Omega=1\quad\Leftrightarrow\quad 2=2X+2X^*-6X+Y=2X^*-4X+Y.
 \end{equation}
\end{enumerate}
We rewrite all the equations in terms of $X,Y,X^*,Y^*$ only. \eqref{gb} as $Y-2X=6-2a_6$, from which we get via \eqref{ystar}
\begin{equation}\label{ystar2}
 Y-2X=6+2Y^*-2X\quad\Leftrightarrow\quad Y^*=\frac{Y}{2}-3.
\end{equation}
Reordering \eqref{xstar} we find
\begin{equation}\label{xstar2}
 X^*=2X-\frac{Y}{2}+1.
\end{equation}
then the result of Section \ref{trix} states that 
\begin{equation}\label{mainthm}
\frac{(\sharp\overrightarrow{\partial\Omega^*})^2}{\sharp\Omega^*}\ge 6\quad\Leftrightarrow\quad\frac{(\sharp\overrightarrow{\partial\Omega}-6)^2}{4\sharp\Omega - \sharp\overrightarrow{\partial\Omega} +2}\ge 12.
\end{equation}
It is direct to verify that equality is achieved for $\Omega$ equal to a ``perfect hexagon'' $\Omega:=H_k$ in the triangular grid, which notation indicates as $k$ the number of graph edges forming each side of the hexagon. For $k\ge 0$ we find that
\begin{itemize}
 \item $\sharp H_k=3k^2+3k+1$
 \item $\sharp\partial H_k=12k+6$
\end{itemize}

\section{The upper bound in the discrete Neumann boundary problem}\label{sec3}

In this section we focus on a possible venue for extending the class of examples reacheable through the setup of Section \ref{sec5}. We aim to improve the understanding of the optimal constant upper bound in the Neumann boundary value problem \eqref{ubdp}, which enters both in the definition of the optimal isoperimetric constant and in the understanding of the corresponding isoperimetric shape.

\medskip

We will formulate the definition of the optimal value of this constant and prove existence and an alternative characterization \mipc{add theorem}.

\subsection{Lowest upper bounds on the discrete Laplacian under fixed boundary value condition}
For any function $g:\bo\to \mathbb R$ denote
\begin{equation}\label{cg}
c_g:=\sum_{y\in\bo}\sharp\{x\in \Omega:\ A(x,y)\neq 0\} g(y).
\end{equation}
We find that $f(x):=c_g/\sharp\Omega$ for all $x\in\Omega$ together with the above $g$ satisfies condition \eqref{necneum11} from Lemma \ref{lemexistsolineq}, and thus there exists a solution $u$ to
\begin{equation}\label{bdval}
\displaystyle  \left\{\begin{array}{rl}
\Delta_A u \le \frac{c_g}{\sharp\Omega}  &  \text{in } \Omega,\\
\frac{\partial u}{\partial \nu_\Omega} = \frac{g}{A} &  \text{on } \overrightarrow{\partial \Omega},
\end{array}\right. 
\end{equation}
with boundary condition interpreted as in \eqref{newbdcond}. The lowest possible feasible upper bound in \eqref{bdval} is
\begin{equation}\label{minmax}
 C(g,\Omega):=\inf\left\{\max_{x\in\Omega}\Delta_A u(x):\ u:\oo\to\mathbb R,\text{ with } \frac{\partial u}{\partial\nu_\Omega}=\frac{g}{A}\text{ in the sense of \eqref{newbdcond}}\right\}.
\end{equation}
Due to Lemma \ref{newbdcond}, $c_g/\sharp\Omega$ is a competitor for the minimization \eqref{minmax} and thus we have the following:
\begin{lemma} With the above notations \eqref{cg} and \eqref{minmax}, there holds
\begin{equation}\label{startbound}
C(g,\Omega)\le\frac{c_g}{\sharp\Omega}.
\end{equation}
\end{lemma}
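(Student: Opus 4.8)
The plan is to realize the value $c_g/\sharp\Omega$ as the upper bound attained by an explicit competitor in the infimum \eqref{minmax}, and for this I would invoke Lemma \ref{lemexistsolineq} with the constant data $f\equiv c_g/\sharp\Omega$ on $\Omega$.

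The only thing to check in order to apply that lemma is the compatibility identity \eqref{necneum11} for the pair $(f,g)$. I would first recall that, since $\{x,y\}\in\overline E\iff A(x,y)\neq 0$, the weight $\sharp\{x\in\Omega:\{x,y\}\in\overline E\}$ appearing in \eqref{necneum11} is the same as $\sharp\{x\in\Omega:A(x,y)\neq 0\}$, and that in the auxiliary graph $G$ of Lemma \ref{lemexistsolineq} the set $\overrightarrow{\partial_G\Omega}$ contains exactly one edge $\{x,x_y\}$ for each ordered pair $(x,y)$ with $x\in\Omega$, $y\in\bo$ and $A(x,y)\neq 0$, on which $\bar g(x,x_y)=g(y)$. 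Summing $\bar g$ over $\overrightarrow{\partial_G\Omega}$ and regrouping the terms by the exterior endpoint $y$ then gives
\[
\sum_{(x,x_y)\in\overrightarrow{\partial_G\Omega}}\bar g(x,x_y)=\sum_{y\in\bo}\sharp\{x\in\Omega:\ A(x,y)\neq 0\}\,g(y)=c_g=\sum_{x\in\Omega}f(x),
\]
which is precisely \eqref{necneum11} (in the connected case; for disconnected $\Omega$ one argues on each connected component, the connected case being the only relevant one for the isoperimetric problem by Lemma \ref{lem:minconnected}). Lemma \ref{lemexistsolineq} then produces $u:\oo\to\mathbb R$ with $\tfrac{\partial u}{\partial\nu_\Omega}=\tfrac{g}{A}$ in the sense of \eqref{newbdcond} and $\Delta_A u(x)\le c_g/\sharp\Omega$ for every $x\in\Omega$; hence $\max_{x\in\Omega}\Delta_A u(x)\le c_g/\sharp\Omega$, so $u$ is admissible in \eqref{minmax} and \eqref{startbound} follows.

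There is no genuine obstacle here, as the statement is an immediate corollary of Lemma \ref{lemexistsolineq}; the one point requiring a little care is the multiplicity bookkeeping in the identity above, namely that a single exterior vertex $y$ adjacent to several vertices of $\Omega$ must be counted with weight $\sharp\{x\in\Omega:\ A(x,y)\neq 0\}$ — which is exactly the multiplicity already built into the definition \eqref{cg} of $c_g$.
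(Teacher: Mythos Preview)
Your proposal is correct and follows exactly the same approach as the paper: the lemma is stated immediately after the paper observes that $f\equiv c_g/\sharp\Omega$ satisfies \eqref{necneum11}, so that Lemma \ref{lemexistsolineq} yields a competitor $u$ in \eqref{minmax} with $\max_{x\in\Omega}\Delta_A u(x)\le c_g/\sharp\Omega$. Your write-up is in fact more careful than the paper's, spelling out the multiplicity bookkeeping behind \eqref{necneum11} and the reduction to connected $\Omega$.
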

\subsection{Study of the optimal $u$ in \eqref{minmax} and lower bounds for $C(g,\Omega)$.} We now show that the infimum in \eqref{minmax} is achieved and find a lower bound for $C(g,\Omega)$. As a consequence of $\partial u/\partial\nu_\Omega=g$ being interpreted as \eqref{newbdcond}, there exists a set $\vec E'\subset \pomega$ such that $\bo=\{y:\ (x,y)\in \vec E'\}$ and such that $g(y)=u(y)-u(x)$ for each $(x,y)\in \vec E'$. Thus we have the more explicit reformulation
\begin{subequations}\label{minmaxlarge}\begin{equation}
 C(g,\Omega)=\min_{E'\in\mathcal E'}C(g,\Omega,\vec E'):=\min_{E'\in\mathcal E'}\inf\left\{\max_{x\in\Omega}\Delta_A u(x):\ u:\oo\to\mathbb R,\ (\forall (x,y)\in \vec E'), u(y)-u(x)=\frac{g(y)}{A(x,y)}\right\},
\end{equation}
where 
\begin{equation}
 \mathcal E':=\{\vec E'\subset \pomega:\ \bo=\{y:\ (x,y)\in \vec E' \},\ \sharp \vec E'=\sharp(\bo)\}.
\end{equation}
\end{subequations}
We next fix $g$ and $\vec E'\in\mathcal E'$ and consider the problem $C(g,\Omega,\vec E')$ by considering first the dual problem. This requires a couple of preliminary definitions.

\medskip

\par{\bf Directed Laplacian.} A directed graph $\vec G=(V,\vec E)$ is defined by a set of vertices $V$ and a set of edges $\vec E\subset V\times V$. We assume that $\vec G$ is such that for each $x\in V$ the set $\{y\in V:\ (x,y)\in \vec E\}$ is nonempty, i.e. that $\vec G$ has no sinks. The directed Laplacian $\vec\Delta$ of the directed graph $\vec G$ is an operator sending the set of functions $u:V\to \R$ to itself, defined by
\[
 \vec \Delta u(x):=\sum_{y: (x,y)\in\vec E}(u(y)-u(x)).
\]
If furthermore a weight $A(x,y)$ is present on edges $(x,y)\in \vec E$, then we can define $\vec \Delta_A u(x):=\sum_{y: (x,y)\in\vec E}A^2(x,y)(u(y)-u(x))$. 

\medskip
\par{\bf As the modifications for general symmetric weights $A(x,y)$ are straightforward, from now on we restrict to the case that $A(x,y)\equiv 1$ on $\vec E$, and leave the extensions to the general case to the reader.} We thus denote $\vec \Delta_A=\vec \Delta$. In order to avoid confusion, we mention here that below we will also use the notation $\vec \Delta_{\vec E'}$ when it is important to keep in mind the set of edges $\vec E'$ as in \eqref{minmaxlarge}.

\medskip

\par{\bf $C(g,\Omega,\vec E')$ reexpressed in terms of directed Laplacian.} For $\vec E'\in\mathcal E'$, the operator $A_{\vec E'}$ sending $\R^{\oo}$ to itself and defined by
\[
 (A_{\vec E'}u)(x)=\left\{\begin{array}{ll} \Delta u(x)&\mbox{ if }x\in\Omega,\\[3mm]
 u(y)-u(x)&\mbox{ if } x\in\oo\setminus\Omega, (x,y)\in\vec E',\end{array}\right.
\]
is the same as the directed Laplacian $\vec \Delta_{\vec E'}$ on the directed graph $\vec G_{\vec E'}$ with vertices $\oo$ and edges 
\[
\{(x,y):\ x\in\Omega, (x,y)\in \vec E_{\oo}\}\cup \{(y,x):\ (x,y)\in\vec E'\}.
\]
\par{\bf Properties of $A_{\vec E'}$.} We note that like for the Laplacian, the directed Laplacian over a graph can be identically zero only on functions that are constant on each connected component of the graph, where now for testing connectedness only oriented paths are allowed. Indeed, if $\vec \Delta u\equiv 0$ at all vertices, then at each vertex $x_0\in G$, this says that $u(x_0)$ is equal to the average of $u$ over all $y$ such that $(x_0,y)\in \vec E'$, no $u$ cannot have strict local minima or maxima.

\par {\bf From now on we assume $\Omega$ connected,} for simplicity, as in the case of $\Omega$ with multiple components we can reason componentwise as in  In this case, we find that $\mathrm{Ker}(A_{\vec E'})$ is formed by just the constant functions $u$ over $\oo$, thus is $1$-dimensional. The operator $A_{\vec E'}$ is in general not self-adjoint, but we can nevertheless gain the information that the image of $A_{\vec E'}$ has codimension $1$. 

\par {\bf Compatibility condition and the image of $A_{\vec E'}$.} We now characterize the orthogonal to the image of $A_{\vec E'}$ with respect to the usual inner product $\langle u,u'\rangle=\sum_{x\in\oo} u(x) u'(x)$. We know from the previous paragraph that the space of functions $v:\oo\to\R$ orthogonal complement of $\mathrm{Im}A_{\vec E'}$ has dimension $1$, thus contains a nonzero function, unique up to scalar multiples. The condition determining this function is $\langle \vec \Delta_{\vec E'}u, v\rangle =0$ for all $u$. This can be explicitly written (note that the computation below holds on any connected directed graph):
\begin{eqnarray}\label{veqn}
 \lefteqn{(\forall u),\ 0=\sum_{x\in\oo}v(x)\vec \Delta_{\vec E'} u(x)=\sum_{x\in\oo}v(x)\sum_{y:x\to y}(u(y)-u(x))}\nonumber\\
 &=&\sum_{x\in\oo}v(x)\left(-d_x^{out} u(x) +\sum_{y:x\to y}u(y)\right)=\sum_{x\in\oo} u(x)\left(- d_x^{out} v(x)+ \sum_{y:y\to x}v(y) \right).
\end{eqnarray}
In the above $d_x^{out}$ (usually called the ``outgoing degree at $x$'') is the number of edges of the graph starting at $x$. Since \eqref{veqn} must hold for all choices of $u$, we find that $v$ solves at each vertex the equation
\begin{equation}\label{veqn1}
 d_x^{out}v(x) =\sum_{y:y\to x}v(y).
\end{equation}
This equation seems to not have been studied in the literature, however in our case we know that a solution exists because we control the kernel of $A_{\vec E'}$.

\par{\bf We will reserve the notation ``$v$'', in the remainder of this section, for the solution of \eqref{veqn1} over $\oo$.}

We say that two vertices $x,y$ in a directed graph are \emph{connected} if there exists an oriented path from $x$ to $y$ or from $y$ to $x$. We say that a directed graph is connected if all pairs of vertices are connected.
\begin{proposition}\label{vnochange}
 Assume that $\vec E'$ and $v$ are as in \eqref{minmaxlarge}, in particular we study the directed Laplacian on $\vec G$, a connected directed graph. Then $v$ does not vanish and as constant sign on $\Omega$.
\end{proposition}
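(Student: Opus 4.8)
The plan is to recognise $v$, after rescaling by out-degrees, as a stationary measure of the simple random walk on $\vec G:=\vec G_{\vec E'}$, and then to exploit that this walk is irreducible. So the first step is to check that $\vec G$ is strongly connected. Its vertex set is $\oo$. Because $\vec E_{\oo}$ is symmetric, for $x,x'\in\Omega$ with $x\sim x'$ both $x\to x'$ and $x'\to x$ are edges of $\vec G$; since $\Omega$ is connected within $G$, the subgraph of $\vec G$ induced on $\Omega$ is therefore strongly connected. Next, every $y\in\oo\setminus\Omega$ is adjacent to some vertex of $\Omega$ (by the definition of $\oo$) and is the second endpoint of exactly one edge $(x_y,y)\in\vec E'$; hence $\vec G$ contains the edge $y\to x_y$ as well as the edge $x_y\to y$ (the latter because $(x_y,y)\in\vec E'\subset \vec E_{\oo}$ with $x_y\in\Omega$). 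Combining these observations, any two vertices of $\vec G$ are joined by oriented paths in both directions, so $\vec G$ is strongly connected; in particular every vertex has at least one outgoing edge.

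The second step is to rewrite \eqref{veqn1} as an invariance equation. Let $P(x,y):=\mathbf 1[x\to y]/d_x^{out}$ be the transition matrix of the simple random walk on $\vec G$ (well defined and row-stochastic since $\vec G$ has no sinks), and put $\mu(x):=d_x^{out}\,v(x)$. Substituting $v=\mu/d^{out}$ into \eqref{veqn1} turns it into $\mu(x)=\sum_{y:\,y\to x}\mu(y)/d_y^{out}=\sum_{y}\mu(y)P(y,x)$, i.e. $\mu^{\mathsf T}P=\mu^{\mathsf T}$: the (a priori signed) measure $\mu$ is invariant for $P$. Since $d_x^{out}>0$ at every vertex, $\mu$ vanishes identically iff $v$ does; and $v\not\equiv 0$, because it spans the one-dimensional orthogonal complement of $\operatorname{Im}A_{\vec E'}$ identified earlier. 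Hence $\mu\not\equiv 0$.

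The third step is to invoke irreducibility: a strongly connected finite graph yields an irreducible $P$, whose invariant measure is unique up to a scalar and has a single sign with no zeros. Concretely, writing $\mu=\mu^+-\mu^-$ with disjoint supports and summing the inequality $\mu^+(x)\le(\mu^+P)(x)$ over $x\in\operatorname{supp}\mu^+$, while using $\sum_x(\mu^+P)(x)=\sum_y\mu^+(y)$, forces all inequalities to be equalities and in particular $(\mu^+P)(x)=0$ off $\operatorname{supp}\mu^+$; thus no edge leaves $\operatorname{supp}\mu^+$, so by strong connectivity $\operatorname{supp}\mu^+$ is either $\emptyset$ or all of $\oo$, and likewise for $\mu^-$. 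As the two supports are disjoint and $\mu\not\equiv0$, exactly one of them equals $\oo$. Therefore $\mu$, and hence $v=\mu/d^{out}$, is everywhere positive or everywhere negative on $\oo$; in particular $v$ does not vanish and has constant sign on $\Omega$.

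The only point where any real argument is needed is the strong connectivity of $\vec G_{\vec E'}$ in the first step (it is here that the hypotheses "$\Omega$ connected" and "$\vec G$ connected" get upgraded to the strong version); once that is in hand, the conclusion is just the classical positivity and uniqueness of the stationary measure of an irreducible finite Markov chain, which can either be quoted from Perron–Frobenius theory or spelled out by the short support argument above.
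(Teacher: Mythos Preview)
Your proof is correct and takes a genuinely different route from the paper's own argument. The paper proceeds by contradiction through a solvability criterion: assuming $v(x_0)=0$ at some $x_0\in\Omega$, one can solve the system $\vec\Delta_{\vec E'}u=\delta_{x_0}$ with zero $\vec E'$-boundary data (since this right-hand side is orthogonal to $v$), and then a maximum-principle argument on connected components of $\vec G\setminus\{x_0\}$ forces $u$ to be constant, contradicting $\vec\Delta_{\vec E'}u(x_0)=1$. The constant-sign part is only sketched in the paper (``Needs to be done'').

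Your approach---recognising $\mu=d^{out}v$ as a left-invariant vector for the random-walk transition matrix $P$ on $\vec G_{\vec E'}$ and then invoking irreducibility---is more standard and handles non-vanishing and constant sign in a single stroke. The key step you correctly isolate is upgrading the (a priori only weak) connectivity hypothesis on $\vec G$ to \emph{strong} connectivity, using that edges inside $\Omega$ come in symmetric pairs and that each boundary vertex $y$ has both the edge $y\to x_y$ (from $\vec E'$) and the edge $x_y\to y$ (since $x_y\in\Omega$). Once strong connectivity is established, your support argument (or a direct appeal to Perron--Frobenius) is airtight. As a bonus, your conclusion gives constant sign on all of $\oo$, not just on $\Omega$.
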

\begin{proof}
We will assume that $v(x_0)=0$ for some $x_0\in \Omega$, and reach a contradiction from this assumption. We first note that if $v(x_0)=0$ then there exists a solution $u$ to the system 
\begin{equation}\label{diraclap}
 \left\{
 \begin{array}{ll}
  \vec \Delta_{E'} u(x_0)=1,&\\[3mm]
  \vec \Delta_{E'} u(x)=0,&\mbox{ if }x\in \Omega\setminus\{x_0\},\\[3mm]
  u(y)-u(x)=0,&\mbox{ if }(x,y)\in\vec E'.
 \end{array}
\right.
\end{equation}
Let $\vec G'$ be a connected component of $\vec G\setminus\{x_0\}$. Then $\vec G'$ is connected to $x_0$ in $\vec G$ and is a connected directed graph.  We note that the condition $\vec \Delta_{E'}u(x)=0$ on $\vec G'$ implies that $u$ is constant on $\vec G'$, as it cannot achieve local maxima or minima. By our initial hypotheses on $\vec G$ and $u$, we thus have that $u$ is constant on $\vec G'$. Since $\vec G'\cup\{x_0\}$ forms a connected subgraph of $\vec G$, there exists $x\in \vec G'$ such that $(x,x_0)$ is an edge in $\vec G$. We have $u(x)=0$ and since $u$ solves the discrete PDE above, we also have that $u(x)$ is the average of the values of $u$ on the neighbors of $x$:
\[ 
 u(x)=\frac{1}{\sharp\{y:\ \exists x\to y\}}\left(\sum_{\substack{y:x\to y\\ y\neq x_0}}u(y) + u(x_0)\right).
\]
But $u(y)=u(x)$ for all $y\neq x_0$ such that $x\to y$, and then the above equation also implies that $u(x_0)$ is also equal to $u(x)$. Thus, $u$ is constant on $\vec G'\cup\{x_0\}$. Repeating this for all connected components of $\vec G\setminus \{x_0\}$, we find that $u$ is constant. On the other hand, we have that by our PDE, $u(x_0)$ is strictly higher than the average of the values of $u$ on the neighbors of $x_0$. This gives the desired contradiction, and shows that \eqref{diraclap} does not have a solution for $x_0\in\Omega$, and thus that $v$ is nonzero over $\Omega$, as desired.

\medskip

A similar reasoning shows that $v$ does not change sign. If $v$ changes sign then we can find $u$ with $\Delta u(x_0)=a, \Delta u(y_0)=b$ with $a,b>0$ and we get a contradiction. Needs to be done.
\end{proof}

As a corollary, we find that $C(g,\Omega,\vec E')>-\infty$:
\begin{corollary}\label{cor:lbcgomega}
 If $\Omega$ is connected and $v$ is defined as above, normalized so that $v>0$ over $\Omega$, then we have the bound
 \begin{equation}\label{lowerbound}
  C(g,\Omega,\vec E')\ge \min\left\{0,-\frac{\sum_{y\in\oo\setminus\Omega}g(y)v(y)}{\sharp\Omega\min_{x\in\Omega}v(x)}\right\}>-\infty.
 \end{equation}
 In particular, the infimum in the definition of $C(g,\Omega,\vec E')$ is achieved and is thus a minimum.
 \end{corollary}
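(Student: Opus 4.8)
The plan is to set up the dual/adjoint problem for the linear operator $A_{\vec E'}$ and extract the lower bound by pairing a generic competitor $u$ with the fixed positive function $v$ solving \eqref{veqn1}. First I would take any $u:\oo\to\mathbb R$ satisfying the boundary condition $u(y)-u(x)=g(y)/A(x,y)$ for all $(x,y)\in\vec E'$, and compute the pairing $\langle A_{\vec E'}u,\,v\rangle=\sum_{x\in\oo}v(x)(A_{\vec E'}u)(x)$. By the adjointness computation already carried out in \eqref{veqn}, this pairing equals $\sum_{x\in\oo}u(x)\big(-d_x^{out}v(x)+\sum_{y:y\to x}v(y)\big)$, which vanishes identically by \eqref{veqn1}. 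Hence $\sum_{x\in\oo}v(x)(A_{\vec E'}u)(x)=0$.

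Next I would split this identity according to the two regimes of $A_{\vec E'}$: on $\Omega$ we have $(A_{\vec E'}u)(x)=\Delta u(x)$, and on $\oo\setminus\Omega$ we have $(A_{\vec E'}u)(x)=u(y)-u(x)=g(y)/A(x,y)$ along the chosen edge $(x,y)\in\vec E'$ (for the normalization $A\equiv 1$ on $\vec E'$ this is simply $g(y)$, and for the general weighted case one carries the $A$ factor through as the paper allows). This gives
\begin{equation*}
\sum_{x\in\Omega}v(x)\,\Delta u(x) \;=\; -\sum_{y\in\oo\setminus\Omega}v(y)\,g(y),
\end{equation*}
using that the map $(x,y)\mapsto y$ is a bijection from $\vec E'$ onto $\oo\setminus\Omega$ by the definition of $\mathcal E'$. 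Now, since $v>0$ on $\Omega$ (by Proposition \ref{vnochange}, after normalization), I bound the left-hand side from above by $\big(\max_{x\in\Omega}\Delta u(x)\big)\sum_{x\in\Omega}v(x)$ when the max is nonnegative, or else observe the max is negative and the trivial bound $0$ applies; in either case $\max_{x\in\Omega}\Delta u(x)\ge -\big(\sum_{y\in\oo\setminus\Omega}v(y)g(y)\big)/\sum_{x\in\Omega}v(x)$ whenever that quantity is negative, and $\ge 0$ otherwise — hence the stated bound with $\sum_{x\in\Omega}v(x)$ replaced by $\sharp\Omega\min_{x\in\Omega}v(x)$, which is a weaker but cleaner form since $\min_x v(x)>0$. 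Taking the infimum over $u$ gives $C(g,\Omega,\vec E')\ge\min\{0,-(\sum_y g(y)v(y))/(\sharp\Omega\min_x v(x))\}>-\infty$, which is \eqref{lowerbound}.

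Finally, to conclude that the infimum in the definition of $C(g,\Omega,\vec E')$ is achieved: the functional $u\mapsto\max_{x\in\Omega}\Delta u(x)$ is continuous, piecewise-linear and invariant under adding constants to $u$, so it descends to the finite-dimensional quotient $\mathbb R^{\oo}/\mathrm{const}$; the affine constraint set $\{u:\ u(y)-u(x)=g(y)/A(x,y)\ \forall(x,y)\in\vec E'\}$ is a closed affine subspace, nonempty by Lemma \ref{lemexistsolineq}; and the sublevel sets are closed, while the lower bound just proved shows the functional is bounded below. A standard coercivity argument — using that $\Delta$ has one-dimensional kernel (the constants) on the connected $\oo$, so that on the quotient the constraint forces any minimizing sequence into a bounded set once the boundary jumps are fixed — then yields a minimizer. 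The main obstacle I expect is precisely this coercivity step: one must verify carefully that a minimizing sequence cannot escape to infinity in the quotient, which uses both the connectedness of $\Omega$ (so that $\Delta u$ bounded above plus the boundary jumps fixed controls the oscillation of $u$) and the nondegeneracy of $v$ from Proposition \ref{vnochange}; the adjoint identity and the sign of $v$ do most of the work, but translating "bounded below" into "minimum attained" requires this compactness check rather than a purely algebraic manipulation.
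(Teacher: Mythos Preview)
Your approach is essentially the same as the paper's: both rest on the compatibility identity $\sum_{x\in\Omega}v(x)\Delta u(x)=-\sum_{y\in\oo\setminus\Omega}g(y)v(y)$, obtained by pairing $A_{\vec E'}u$ with $v$, together with the positivity of $v$ on $\Omega$ from Proposition~\ref{vnochange}. The paper runs this as a contradiction whereas you do it directly, which is an immaterial difference.

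Two minor points. First, your case split on the sign of $\max_{x\in\Omega}\Delta u(x)$ is unnecessary and, as written, muddled (there is no ``trivial bound $0$'' on the max when it is negative). The inequality $\sum_{x}v(x)\Delta u(x)\le\big(\max_{x}\Delta u(x)\big)\sum_{x}v(x)$ holds unconditionally since $v>0$, giving $\max_{x}\Delta u(x)\ge -\sum_{y}g(y)v(y)\big/\sum_{x}v(x)$ in every case; the only case distinction needed is on the sign of $\sum_{y}g(y)v(y)$, at the moment you weaken $\sum_{x}v(x)$ to $\sharp\Omega\cdot\min_{x}v(x)$. Second, your worry about coercivity for attainment of the infimum is more than is required: the problem is a finite linear program (as the paper itself notes in \eqref{lp}), and a bounded-below LP with nonempty feasible region attains its optimum by standard polyhedral theory. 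The paper's own proof does not argue this point explicitly, so your discussion is if anything more careful than needed.
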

\begin{proof}
The last inequality follows directly due to the fact that $g,v$ take finite values and $\Omega$ is a finite set, together with Proposition \ref{vnochange}, which ensures that the minimum of $v$ over $\Omega$ is $>0$. If \eqref{lowerbound} were false, we would have a solution $u$ with right hand side $f$ such that $f<0$ and $f<-\frac{\sum_{y\in\oo\setminus\Omega}g(y)v(y)}{\sharp\Omega\min_{x\in\Omega}v(x)}$ over $\Omega$. But then the compatibility condition  gives
\begin{eqnarray*}
 -\sum_{y\in\oo\setminus\Omega}g(y)v(y)&=&\sum_{x\in\Omega}f(x)v(x)\\
 &<& \min\left\{0,-\ \frac{\sum_{y\in\oo\setminus\Omega}g(y)v(y)}{\sharp\Omega\min_{x\in\Omega}v(x)}\right\}\sum_{x\in\Omega}v(x)\\
 &\le&\min\left\{0,-\ \frac{\sum_{y\in\oo\setminus\Omega}g(y)v(y)}{\sharp\Omega\min_{x\in\Omega}v(x)}\right\}\sharp\Omega\min_{x\in\Omega}v(x)\\
 &=&\min\left\{0,- \sum_{y\in\oo\setminus\Omega}g(y)v(y)\right\},
\end{eqnarray*}
which gives a contradiction.
\end{proof}

{\bf Sign convention:} We assume that $v>0$, which has the only role of simplifying notations below. Recall that we are interested to get bounds on $C(g,\Omega,\vec E')$ in terms of $g$. These will be based on bounds on $M(\vec E'):=\frac{\max v}{\min v}$, which $M(\vec E')$ does not change if $v$ is replaced by $\lambda v$ with $\lambda\in \R$.

\par{\bf Condition for optimality in \eqref{minmaxlarge}.} 
 Note that the problem $C(g,\Omega,\vec E')$ can be cast as a linear programming problem, as follows:
\begin{equation}
 \min\left\{\max_{x\in\Omega}\Delta u(x):\ (\forall (x,y)\in\vec E'), u(y)-u(x)=g(y)\right\}=\min\left\{z:\ B\vec u = \vec g, \ (\forall x\in \Omega) z\ge A_x\cdot \vec u\right\},\label{lp}
\end{equation}
where we represent $u$ as a vector $\vec u\in \R^{\oo}$, and the variables of the problem are then $\vec u, z$ with $z$ one-dimensional, and we defined $B$ to be the matrix which applied to $\vec u$ gives the vector $(u(y)-u(x))_{y\in \oo\setminus \Omega}$ and $A_x\cdot\vec u=\Delta u(x)$. The equality \eqref{lp} is a simple exercise familiar to linear programming students.

\par We see that $\bar u, \bar z$ form an optimizer in \eqref{lp} if and only if 
\begin{enumerate}
\item We have $B\bar u =\vec g$ and $(\forall x\in \Omega) \bar z\ge A_x\cdot \bar u$.
\item If $(\tilde u, \tilde z)$ is a perturbation respecting the constraint, i.e. satisfying $B\tilde u=0$, and such that $\tilde z<0$, then it violates some of the inequalities which were saturated at $\bar u, \bar z$, i.e. 
\[\tilde z\ge A_x\cdot \tilde u\mbox{ for some }x\in\Omega\mbox{ at which }\bar z = A_x\cdot\bar u.
\]
Indeed, were this not the case, a small perturbation along $\tilde u, \tilde z$ would decrease the value of the $z$ variable while respecting the constraints, contradicting the optimality of $\bar u, \bar z$.
\end{enumerate}
Note that the variable $\tilde z$ can be eliminated in the point (2) above, and the resulting optimality condition can be then expressed in terms of our discrete equation as follows:
\begin{lemma}\label{condoptlem}
 $\bar u$ is an optimizer for $C(g,\Omega,\vec E')$ if and only if the following is true:
 \begin{equation}\label{condopt}
  \mbox{For all } \tilde u\mbox{ with $\vec E'$-boundary datum zero},\quad (\exists x\in\mathrm{argmax}_\Omega\Delta\bar u)\mbox{ at which }\Delta \tilde u(x)\ge 0. 
 \end{equation}
\end{lemma}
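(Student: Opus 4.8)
The plan is to derive Lemma~\ref{condoptlem} directly from the linear-programming reformulation \eqref{lp} together with the first-order optimality criterion for linear programs; the only genuine work is the elimination of the scalar variable $\tilde z$.

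First I would recall from \eqref{lp} that $C(g,\Omega,\vec E')$ is the optimal value of $\min\{z:\ B\vec u=\vec g,\ A_x\cdot\vec u-z\le 0\ \forall x\in\Omega\}$, with $A_x\cdot\vec u=\Delta u(x)$, and observe that at any optimizer $(\bar u,\bar z)$ one necessarily has $\bar z=\max_{x\in\Omega}\Delta\bar u(x)$, since otherwise $z$ could be lowered while keeping feasibility. Hence the inequality constraints that are active (saturated) at $(\bar u,\bar z)$ are exactly those indexed by $\mathrm{argmax}_\Omega\Delta\bar u$. Because the feasible region is a polyhedron and the objective is linear, $(\bar u,\bar z)$ is optimal if and only if its tangent cone carries no descent direction for $z$; spelled out, $\bar u$ is an optimizer for $C(g,\Omega,\vec E')$ if and only if there is \emph{no} pair $(\tilde u,\tilde z)$ with $B\tilde u=0$, with $\Delta\tilde u(x)\le\tilde z$ for every $x\in\mathrm{argmax}_\Omega\Delta\bar u$, and with $\tilde z<0$. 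For this equivalence I would note that $B\tilde u=0$ is forced by the equality constraint, that inactive inequalities survive sufficiently small perturbations, that moving a short distance $\varepsilon>0$ along such a $(\tilde u,\tilde z)$ strictly decreases $z$ while remaining feasible (so its existence would contradict optimality), and conversely that the absence of such a direction already yields global optimality by convexity of the feasible polyhedron. I would also point out that ``$B\tilde u=0$'' says precisely that $\tilde u$ has $\vec E'$-boundary datum zero.

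Then comes the key step: eliminating $\tilde z$ and matching the condition just obtained with \eqref{condopt}. In one direction, if \eqref{condopt} holds and $(\tilde u,\tilde z)$ satisfies $B\tilde u=0$ and $\tilde z<0$, then some active $x$ has $\Delta\tilde u(x)\ge 0>\tilde z$, so $\Delta\tilde u(x)\le\tilde z$ fails for that $x$ and no forbidden pair can exist; hence $\bar u$ is optimal. Conversely, assuming no forbidden pair exists and fixing $\tilde u$ with $B\tilde u=0$, I would argue by contradiction: were $\Delta\tilde u(x)<0$ for every $x\in\mathrm{argmax}_\Omega\Delta\bar u$, then $m:=\max\{\Delta\tilde u(x):x\in\mathrm{argmax}_\Omega\Delta\bar u\}<0$ by finiteness of $\Omega$, and any $\tilde z$ with $m<\tilde z<0$ would produce a forbidden pair $(\tilde u,\tilde z)$ — a contradiction; hence some active $x$ has $\Delta\tilde u(x)\ge 0$, which is exactly \eqref{condopt}.

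The main obstacle is precisely this last elimination: one has to be careful with the order of quantifiers (``for every $\tilde u$ there exists an active $x$'' versus the cone description that still carries the auxiliary $\tilde z$) and with the strict versus non-strict inequalities, and the point that makes it go through is the finiteness of $\mathrm{argmax}_\Omega\Delta\bar u$, which lets one interpolate a value of $\tilde z$ strictly between the negative number $m$ and $0$. Everything else — the reformulation \eqref{lp}, the description of the tangent cone at $(\bar u,\bar z)$, and the first-order optimality dichotomy for linear programs — is routine and can be stated with only brief justification.
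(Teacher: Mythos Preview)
Your proposal is correct and follows precisely the route the paper itself takes: the paper reformulates $C(g,\Omega,\vec E')$ as the linear program \eqref{lp}, writes down the first-order optimality condition in terms of perturbations $(\tilde u,\tilde z)$ with $B\tilde u=0$ and $\tilde z<0$, and then simply asserts that ``the variable $\tilde z$ can be eliminated'' to obtain \eqref{condopt}. You carry out this elimination in full, supplying the two directions and the finiteness argument that the paper leaves implicit, so your argument is a fleshed-out version of the paper's sketch rather than a different approach.
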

We are now ready for the following proposition.
\begin{proposition}\label{optconst}
Assume that $\Omega$ is connected and the minimum in $C(g,\Omega,\vec E')$ is achieved. Let $v$ be a nonzero solution of \eqref{veqn1} corresponding to $\vec E'$. Then if $\bar u$ is a function which achieves the minimum in \eqref{lp}, then there holds 
\begin{equation}\label{argmaxinclusion}
\mathrm{argmax}_{x\in\Omega}\Delta \bar u\supset \{x:\ v(x)\neq 0\}.
\end{equation}
\end{proposition}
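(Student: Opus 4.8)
The plan is to argue by contradiction using the optimality criterion of Lemma \ref{condoptlem} together with the compatibility condition \eqref{veqn}. Suppose $\bar u$ achieves the minimum in \eqref{lp} but \eqref{argmaxinclusion} fails, i.e. there is a vertex $x_1\in\Omega$ with $v(x_1)\neq 0$ but $x_1\notin\mathrm{argmax}_{x\in\Omega}\Delta\bar u$. Write $S:=\mathrm{argmax}_{x\in\Omega}\Delta\bar u$, the set of vertices where the max is attained; by assumption $S$ is a proper subset of $\Omega$ and misses at least one $v$-nonzero vertex. The idea is to construct a perturbation $\tilde u$ with zero $\vec E'$-boundary datum such that $\vec\Delta_{\vec E'}\tilde u(x)<0$ at every $x\in S$, which would contradict \eqref{condopt}.

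First I would set up the linear-algebraic framework. The operator $A_{\vec E'}=\vec\Delta_{\vec E'}$ has kernel equal to the constants (since $\Omega$, hence $\vec G_{\vec E'}$, is connected), so its image has codimension one, and by \eqref{veqn}--\eqref{veqn1} the orthogonal complement of $\mathrm{Im}\,A_{\vec E'}$ is spanned by $v$ (extended over $\oo$). Consequently, for a target function $\psi:\oo\to\mathbb R$ supported on $\Omega$, the equation $A_{\vec E'}\tilde u=\psi$ has a solution if and only if $\langle \psi, v\rangle=\sum_{x\in\Omega}\psi(x)v(x)=0$; and any such solution automatically has zero $\vec E'$-boundary datum because $A_{\vec E'}\tilde u$ vanishes on $\oo\setminus\Omega$, which is exactly the condition $\tilde u(y)-\tilde u(x)=0$ for $(x,y)\in\vec E'$. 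So it suffices to produce $\psi$ supported on $\Omega$, strictly negative on $S$, and orthogonal to $v$. If all of $v$'s mass were concentrated on $S$ this would be impossible, but by hypothesis there is a vertex $x_1\in\Omega\setminus S$ with $v(x_1)\neq 0$; after normalizing $v>0$ on $\Omega$ (Proposition \ref{vnochange}), we can choose $\psi$ to equal $-1$ on each $x\in S$ and a suitable positive value on $x_1$ to restore orthogonality, namely $\psi(x_1)=\bigl(\sum_{x\in S}v(x)\bigr)/v(x_1)>0$, and $\psi=0$ elsewhere on $\Omega$. Then $\langle\psi,v\rangle=0$, so $\tilde u$ with $A_{\vec E'}\tilde u=\psi$ exists, has zero $\vec E'$-boundary datum, and satisfies $\vec\Delta_{\vec E'}\tilde u(x)=\psi(x)=-1<0$ for every $x\in S=\mathrm{argmax}_\Omega\Delta\bar u$. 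This directly contradicts the optimality condition \eqref{condopt} of Lemma \ref{condoptlem}, which demands the existence of some $x\in S$ with $\Delta\tilde u(x)\geq 0$. Hence \eqref{argmaxinclusion} must hold.

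The main obstacle I anticipate is bookkeeping around the two graphs: $\Delta$ in \eqref{condopt} refers to the undirected Laplacian $\Delta_A$ on $\Omega$ inside $G$, whereas $A_{\vec E'}=\vec\Delta_{\vec E'}$ is the directed Laplacian on $\vec G_{\vec E'}$, and one must check these agree on the interior vertices $\Omega$ (they do, by the definition of $A_{\vec E'}$: on $x\in\Omega$ it equals $\Delta u(x)$), so that a perturbation with zero $\vec E'$-boundary datum is precisely what Lemma \ref{condoptlem} tests and the value $\vec\Delta_{\vec E'}\tilde u(x)$ on $\Omega$ is exactly $\Delta\tilde u(x)$. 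A secondary subtlety is that we need $v$ genuinely nonzero on the relevant vertex $x_1$ with a definite sign so that $\psi(x_1)$ is well-defined and has the right sign; this is supplied by Proposition \ref{vnochange} under the standing connectedness assumption, which also guarantees $v$ has constant sign on $\Omega$ so the normalization $v>0$ is legitimate. Once these identifications are in place the contradiction is immediate, so the proof is short; I would present it exactly as the contradiction argument above, invoking Lemma \ref{condoptlem}, the codimension-one image characterization via \eqref{veqn1}, and Proposition \ref{vnochange}.
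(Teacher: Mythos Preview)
Your argument is correct and follows essentially the same route as the paper's proof: both pick a point $x_0$ (your $x_1$) outside $\mathrm{argmax}_\Omega\Delta\bar u$ with $v(x_0)\neq 0$, use the codimension-one characterization of $\mathrm{Im}\,A_{\vec E'}$ via \eqref{veqn1} to manufacture a zero-boundary perturbation $\tilde u$ with $\Delta\tilde u<0$ on the argmax set, and invoke Lemma~\ref{condoptlem} for the contradiction. The only cosmetic difference is that the paper takes $\Delta\tilde u=-1$ on all of $\Omega\setminus\{x_0\}$ and balances at $x_0$, whereas you set $\Delta\tilde u=-1$ only on $S$ and balance at $x_1$; both choices satisfy the compatibility condition $\langle\psi,v\rangle=0$ and yield the same contradiction.
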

\begin{proof}
 Assume that $\Delta \bar u$ is not constant, say that $x_0\in\Omega$ is not a maximum point of $\Delta \bar u$. Recall that $v$ being a nonzero solution to \eqref{veqn1}, it gives the direction perpendicular to the image of $A_{\vec E'}$. This means that a solution to 
 \[
    \left\{\begin{array}{ll}
          \Delta\tilde u(x)=f(x),&\mbox{ for }x\in \Omega,\\[3mm]
          \tilde u(y)-\tilde u(x)=0,&\mbox{ for }(x,y)\in\vec E'
          \end{array}
\right.
 \]
exists if and only if there holds
\begin{equation}\label{condexist}
 \sum_{x\in\Omega} v(x) f(x)=0.
\end{equation}
If by contradiction we assume that \eqref{argmaxinclusion} is false, this means that there is a point $x_0\in\Omega$ at which $\Delta\bar u$ is not maximal, and such that $v(x_0)\neq 0$. For such $x_0$, we can find a solution $\tilde u$ to the following equation
  \[
   \left\{\begin{array}{ll}
          \Delta\tilde u(x_0)=\frac{1}{v(x_0)}\sum_{x\in\Omega\setminus\{x_0\}}v(x),&\\[3mm]
          \Delta\tilde u(x)=-1,&\mbox{ for } x\in\Omega\setminus\{x_0\},\\[3mm]
          \tilde u(y)-\tilde u(x)=0,&\mbox{ for }(x,y)\in\vec E'.
          \end{array}
\right.
  \]
We defined the above right hand side so that \eqref{condexist} is verified, which as seen above guarantees that a solution $\tilde u$ exists. This solution contradicts \eqref{condopt} because $\Delta \tilde u<0$ over $\mathrm{argmax}_{x\in\Omega}\Delta \bar u$. This contradiction shows that \eqref{argmaxinclusion} holds.
\end{proof}
As a direct consequence of the Propositions \ref{vnochange} and \ref{optconst}, we have the following:
\begin{corollary}\label{lapconstant}
 If $\Omega$ is connected and the minimum in $C(g,\Omega, \vec E')$ is achieved by a function $\bar u$, then $\Delta \bar u$ is constant over $\Omega$.
\end{corollary}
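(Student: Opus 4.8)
The plan is to obtain the statement as an immediate consequence of the two preceding propositions, so the work is really just to line up their hypotheses and conclusions. First I would invoke Proposition~\ref{vnochange}: since $\Omega$ is assumed connected, the directed graph $\vec G$ underlying the operator $A_{\vec E'}$ (vertices $\oo$, edges $\{(x,y):\ x\in\Omega,\ (x,y)\in\vec E_{\oo}\}\cup\{(y,x):\ (x,y)\in\vec E'\}$) is connected in the sense used there, because every vertex of $\bo$ is joined by an edge of $\vec E'$, oriented into the connected set $\Omega$. Hence the solution $v$ of \eqref{veqn1} corresponding to $\vec E'$ does not vanish and has constant sign on $\Omega$; in particular $\{x\in\Omega:\ v(x)\neq 0\}=\Omega$.

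Next I would feed this into Proposition~\ref{optconst}, applied to the minimizer $\bar u$ (which exists by hypothesis, or equivalently by Corollary~\ref{cor:lbcgomega}). That proposition yields $\mathrm{argmax}_{x\in\Omega}\Delta\bar u\supset\{x:\ v(x)\neq 0\}$, and by the previous step the right-hand side is all of $\Omega$. Therefore $\Delta\bar u$ attains its maximum over $\Omega$ at every point of $\Omega$, which is exactly to say that $\Delta\bar u$ is constant on $\Omega$, as claimed.

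There is essentially no obstacle intrinsic to this argument: all the analytic content sits in Propositions~\ref{vnochange} and~\ref{optconst} (and, behind the scenes, in the linear-programming optimality condition of Lemma~\ref{condoptlem} and the existence/codimension-one image statements for $A_{\vec E'}$). The only point that must be checked with some care is precisely the compatibility of hypotheses highlighted above — namely that connectedness of $\Omega$ in $G$ indeed implies connectedness of the directed graph on which $v$ is defined, so that Proposition~\ref{vnochange} is applicable and forces $v$ to be nowhere zero on $\Omega$. Once that is in place, the corollary follows in one line.
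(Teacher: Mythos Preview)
Your proposal is correct and matches the paper's approach exactly: the paper states this corollary as ``a direct consequence of Propositions~\ref{vnochange} and~\ref{optconst}'' without further argument, and what you have written simply spells out that two-line deduction. The extra care you take in checking that connectedness of $\Omega$ transfers to the directed graph $\vec G_{\vec E'}$ is appropriate, since this is needed to invoke Proposition~\ref{vnochange}, and is implicitly assumed in the paper.
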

As a consequence of the corollary, we have the following inequality, which is the content of Theorem \ref{thmconstnpintro}

\begin{equation}\label{cgomegal2}
 C(g,\Omega)=\min\left\{\frac{\sum_{y\in\bo}g(y)v_{\vec E'}(y)}{\sum_{x\in\Omega}v_{\vec E'}(x)}:\ v_{\vec E'}\mbox{ satisfies }\eqref{veqn1}\mbox{ on }\vec G_{\vec E'}\mbox{ for some }\vec E'\in\mathcal E'\right\}.
\end{equation}

\end{document}